\documentclass[preprint,11pt,a4paper]{elsarticle}
\usepackage[a4paper,margin=25mm]{geometry}
\usepackage{amsmath,amssymb,amsthm,mathrsfs}
\usepackage{lmodern,microtype}
\usepackage[hidelinks]{hyperref}
\newtheorem{theorem}{Theorem}[section]
\newtheorem{lemma}[theorem]{Lemma}
\newtheorem{proposition}[theorem]{Proposition}
\newtheorem{corollary}[theorem]{Corollary}

\numberwithin{equation}{section}

\begin{document}
\setlength{\abovedisplayskip}{8pt plus 2pt minus 2pt}
\setlength{\belowdisplayskip}{8pt plus 2pt minus 2pt}
\setlength{\abovedisplayshortskip}{3pt plus 2pt}
\setlength{\belowdisplayshortskip}{6pt plus 2pt minus 2pt}
\begin{frontmatter}
\title{On the product of nontrivial cross-intersecting families}
\author[BW]{Biao Wu}
\ead{wu@hunnu.edu.cn}
\author[zh]{Huajun Zhang\corref{Y}}
\ead{huajunzhang@zjnu.cn}
\address[BW]{School of Mathematics and Statistics,
Hunan Normal University, Changsha 410081, P. R. China}
\address[zh]{Department of Mathematics,
Shaoxing University, Shaoxing 312000, P. R. China}
\cortext[Y]{Corresponding author.}

\begin{abstract}
Let $n\geq2\ell\geq2k\geq6$ and $n>k+\ell$.
We determine the maximum of $|\mathcal A||\mathcal B|$ over all
nonempty, nontrivial cross-intersecting families
$\mathcal A\subseteq\binom{[n]}k$ and
$\mathcal B\subseteq\binom{[n]}\ell$.
The exact bound is $|\mathcal A||\mathcal B|\leq\max\{M_1,M_2\}$, where
\[
\begin{aligned}
M_1
&=\left(\binom{n-1}{k-1}-\binom{n-\ell-1}{k-1}+1\right)
\left(\binom{n-1}{\ell-1}-\binom{n-k-1}{\ell-1}+1\right),\\
M_2
&=\left(\binom{n-2}{\ell-2}+1\right)
\sum_{i=1}^{2}
\left(\binom{n-i}{k-1}-\binom{n-\ell-i}{k-1}\right).
\end{aligned}
\]
Both terms are attained by explicit constructions: a
Hilton--Milner-type pair and a two-point construction, respectively.
We classify all equality cases, including the additional
cross-triangle pairs when $k=\ell=3$.
Two structural results underlie the proof.
First, we establish a restricted-lex comparison theorem showing that
restricted lexicographic initial segments minimize disjointness
neighborhoods, and characterize equality at the first nontrivial
threshold.
Second, a decomposition of critically shifted families into two
parts with disjoint cores leads to a sharp product theorem, with
equality characterized, for all cross-intersecting pairs admitting
this decomposition, without additional nontriviality assumptions.
Endpoint comparisons and reverse shifting complete the proof for
unequal as well as equal uniformities.
\end{abstract}
\begin{keyword}
Cross-intersecting families \sep Hilton--Milner theorem \sep shifting
\sep Kruskal--Katona theorem
\MSC[2020] 05D05
\end{keyword}
\end{frontmatter}

\section{Introduction}
For a positive integer $n$, write $[n]=\{1,\ldots,n\}$ and
$[a,b]=\{a,a+1,\ldots,b\}$, with $[a,b]=\emptyset$ if $a>b$.
For a finite set $X$, let $\binom Xk$ denote the family of all
$k$-element subsets of $X$. We use the convention
$\binom ab=0$ when $a\geq0$ and either $b<0$ or $b>a$.
A nonempty family $\mathcal F$ is \emph{intersecting} if any two of
its members intersect, and \emph{nontrivial} if
$\bigcap_{F\in\mathcal F}F=\emptyset$.

The Erd\H{o}s--Ko--Rado theorem is a fundamental result in extremal
set theory.
\begin{theorem}[Erd\H{o}s--Ko--Rado \cite{EKR}]
Let $n\geq2k$. If $\mathcal F\subseteq\binom{[n]}k$ is intersecting,
then $|\mathcal F|\leq\binom{n-1}{k-1}$. If $n>2k$, equality holds
if and only if $\mathcal F=\{F\in\binom{[n]}k:x\in F\}$ for some
$x\in[n]$.
\end{theorem}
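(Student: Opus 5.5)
The plan is to prove the Erd\H{o}s--Ko--Rado theorem by Katona's cyclic permutation method; the equality part will then come from analysing when the cyclic bound is tight. Fix a cyclic ordering $\sigma$ of $[n]$, and call a $k$-set an \emph{arc} of $\sigma$ if its elements are consecutive in $\sigma$. There are exactly $n$ arcs.

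The first step is the key lemma. When $n\geq2k$, an intersecting family contains at most $k$ arcs of any fixed cyclic order. To see this, suppose $A=\{\sigma(1),\dots,\sigma(k)\}$ is one of the arcs in $\mathcal F$. Every other arc meeting $A$ either starts at $\sigma(i+1)$ or ends at $\sigma(i)$, for some $1\leq i\leq k-1$. For each such $i$, the arc ending at $\sigma(i)$ and the arc starting at $\sigma(i+1)$ are disjoint, because $n\geq2k$. So at most one of each pair lies in $\mathcal F$, which gives at most $1+(k-1)=k$ arcs.

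The second step is double counting. We count pairs $(\sigma,F)$ with $F\in\mathcal F$ an arc of $\sigma$. Each $k$-set is an arc in exactly $k!(n-k)!$ of the $(n-1)!$ cyclic orders. Combining this with the lemma gives
\[
|\mathcal F|\,k!(n-k)!\leq k(n-1)!,
\]
which is exactly $|\mathcal F|\leq\binom{n-1}{k-1}$.

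The third step is uniqueness when $n>2k$, and I expect it to be the main obstacle. If equality holds, then every cyclic order contains exactly $k$ arcs from $\mathcal F$. One first shows that in the strict case $n>2k$, any $k$ pairwise-intersecting arcs of a cycle must all contain a common point. Indeed, the argument above forces the arcs to be $\{\sigma(j),\dots,\sigma(j+k-1)\}$ for $j$ ranging over a window of $k$ consecutive starting positions. The slack $n>2k$ rules out a wrap-around configuration, so the arcs share the point $\sigma(j_0+k-1)$, where $j_0$ is the first start.

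One then has to pass from "each cycle has a common point" to a global common point $x$. My plan is to compare two cyclic orders that differ by a single adjacent transposition. Their arc families overlap heavily, so the centres must agree. Since adjacent transpositions connect all cyclic orders, the centre is the same point $x$ for every cycle. Then every $F\in\mathcal F$ is an arc of some cycle, so it contains $x$. Equality of cardinalities then gives $\mathcal F=\{F:x\in F\}$.

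The delicate point is checking that the centre really is preserved under a transposition when $n>2k$. If that local argument gets messy, the fallback is to cite Hilton--Milner. Any nontrivial intersecting family has size at most $\binom{n-1}{k-1}-\binom{n-k-1}{k-1}+1$, which is strictly less than $\binom{n-1}{k-1}$ when $n>2k$, so a family attaining equality must be trivial. Being of full size, it must then be the whole star.
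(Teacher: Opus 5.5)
The paper does not prove this theorem; it is quoted as background, so your proposal has to stand on its own. Steps~1 and~2 are Katona's argument and are correct. The cycle-structure claim in Step~3 is also right once spelled out. The arc ending at $\sigma(i)$ and the arc starting at $\sigma(i+2)$ are disjoint when $n>2k$. So you cannot take the ending arc from the $i$-th pair together with the starting arc from the $(i+1)$-st pair, and the choices are monotone. The $k$ arcs are therefore exactly the arcs through one point $c(\sigma)$.

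The genuine gap is the transposition step: heavy overlap does not force the centres to agree. Let $x=\sigma(p)$ be the centre and swap $x$ with $y=\sigma(p+1)$.
\begin{itemize}
\item Of the $\mathcal F$-arcs of $\sigma$, only the arc ending at $x$ is destroyed, and the $k-1$ survivors all contain both $x$ and $y$.
\item The $k$-th $\mathcal F$-arc of $\sigma'$ may then be the arc of $\sigma'$ starting at $x$ (centre $x$). It may equally be the arc ending at $y$, namely $\{\sigma(p-k+1),\dots,\sigma(p-1),y\}$ (centre $y$).
\item This second set meets every arc of $\sigma$ through $x$ and is not an arc of $\sigma$. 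So nothing in the pair $\sigma,\sigma'$ excludes it.
\end{itemize}
The same two-candidate ambiguity occurs when a swap avoiding $x$ destroys the arc starting or ending at $x$, because $k-1$ consecutive arcs have two common points. Thus ``adjacent transpositions connect all cyclic orders'' does not by itself make the centre constant.

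The step can be repaired as follows.
\begin{itemize}
\item Swap only cyclically adjacent elements different from the current centre $x$. These moves still connect all cyclic orders. A cyclic order is the same as the linear order of $[n]\setminus\{x\}$ read off after $x$, and these swaps are exactly its adjacent transpositions.
\item Such a swap destroys at most one of the two extreme arcs through $x$. If both survive, they meet only in $x$ and both contain the new centre, so the centre is still $x$.
\item Suppose instead the arc starting at $x$ is destroyed. Then the swap is of $\sigma(p+k-1)$ and $\sigma(p+k)$, and the surviving $\mathcal F$-arcs start at $p-k+1,\dots,p-1$. The only alternative to centre $x$ would put $\{\sigma(p-k),\dots,\sigma(p-1)\}$ into $\mathcal F$.
\item That set is untouched by the swap since $n>2k$, and it is an arc of $\sigma$ avoiding $x$. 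So $\sigma$ would carry $k+1$ arcs of $\mathcal F$, contradicting Step~1. The case of the arc ending at $x$ is symmetric.
\end{itemize}
Hence the centre never moves and never needs to be swapped, and the rest of your Step~3 goes through.

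Your Hilton--Milner fallback is also logically valid for $k\geq2$, and $k=1$ is trivial. This is because $h(n,k,1)<\binom{n-1}{k-1}$ exactly when $n>2k$. It does, however, deduce an elementary statement from a strictly stronger theorem. The repaired cycle argument is self-contained.
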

The extremal family in this theorem is a full star. Excluding stars
leads to the Hilton--Milner theorem. Define
\begin{align*}
\mathcal H(n,k,1)&=
\{F\in\tbinom{[n]}k:1\in F,\ F\cap[2,k+1]\neq\emptyset\}
\cup\{[2,k+1]\},\\
h(n,k,1)&=\binom{n-1}{k-1}-\binom{n-k-1}{k-1}+1.
\end{align*}
\begin{theorem}[Hilton--Milner \cite{HM1967}]
Let $n>2k$ and $k\geq2$. If $\mathcal F\subseteq\binom{[n]}k$ is
nontrivial and intersecting, then $|\mathcal F|\leq h(n,k,1)$.
\end{theorem}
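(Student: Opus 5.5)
The plan is to prove the Hilton--Milner bound by shifting, reducing to a shifted nontrivial intersecting family, and then comparing with $\mathcal H(n,k,1)$ through a cross-intersection count. Let $\mathcal F\subseteq\binom{[n]}k$ be nontrivial intersecting with $n>2k$, $k\geq2$.

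\textbf{Shifting step.} We apply shifts $S_{ij}$ ($i<j$), which preserve size and the intersecting property. Shifting may destroy nontriviality, so we use the standard device. If some $S_{ij}(\mathcal F)$ becomes trivial, then it is a star. The only possible centre is $i$, so every member of $\mathcal F$ meets $\{i,j\}$. In that case we stop, and the family is contained in $\{F: F\cap\{i,j\}\neq\emptyset\}$. Otherwise we keep shifting until $\mathcal F$ is shifted and still nontrivial. Hence there are two cases: (a) $\mathcal F$ is shifted and nontrivial; (b) $\mathcal F$ is nontrivial, intersecting, and every member meets $\{1,2\}$ (after relabeling).

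\textbf{Case (b).} Write
\[
\mathcal F=\mathcal F_{1\bar2}\cup\mathcal F_{\bar12}\cup\mathcal F_{12}.
\]
Nontriviality forces both $\mathcal F_{1\bar2}$ and $\mathcal F_{\bar12}$ to be nonempty. The traces $\mathcal A=\{F\setminus\{1\}:F\in\mathcal F_{1\bar2}\}$ and $\mathcal B=\{F\setminus\{2\}:F\in\mathcal F_{\bar12}\}$ are nonempty, cross-intersecting, and consist of $(k-1)$-sets in $[3,n]$. We then bound $|\mathcal A|+|\mathcal B|$ using the classical sum bound for nonempty cross-intersecting families, $|\mathcal A|+|\mathcal B|\leq\binom{m}{k-1}-\binom{m-k+1}{k-1}+1$ with $m=n-2\geq 2(k-1)$. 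This bound itself follows by Kruskal--Katona, or by shifting plus induction.

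Adding $|\mathcal F_{12}|\leq\binom{n-2}{k-2}$ and using Pascal's identity gives
\[
|\mathcal F|\leq \binom{n-1}{k-1}-\binom{n-k-1}{k-1}+1 ,
\]
since $\binom{n-2}{k-2}+\binom{n-2}{k-1}=\binom{n-1}{k-1}$ and $\binom{n-k-1}{k-1}=\binom{m-k+1}{k-1}$. This is exactly $h(n,k,1)$.

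\textbf{Case (a).} For a shifted nontrivial intersecting family, shiftedness implies that every $F\in\mathcal F$ meets $[2k-1]$ in at least $k$ elements after restriction, and that $[2,k+1]\in\mathcal F$. The latter holds because some member avoids $1$, and shifting it down yields $[2,k+1]$. Then every member containing $1$ must meet $[2,k+1]$. Members avoiding $1$ form a family $\mathcal F(\bar1)$, and $\mathcal F(1)$ cross-intersects $\mathcal F(\bar1)$ on $[2,n]$. We can either apply the same sum bound to the pair $(\mathcal F(1),\mathcal F(\bar 1))$, which are $(k-1)$- and $k$-sets on $[2,n]$, or, more simply, observe that case (a) also satisfies the hypothesis of case (b) with the pair $\{1,2\}$. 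For the latter, a shifted nontrivial family has every member meeting $\{1,2\}$ whenever $[3,k+2]\notin\mathcal F$, and otherwise the family lies in $\binom{[2k]}{k}$-type configurations, which are small when $n>2k$.

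\textbf{Main obstacle.} The main obstacle is the sum inequality for nonempty cross-intersecting families, specifically its tightness and nonemptiness handling. We would prove it by Kruskal--Katona. With $|\mathcal B|=b\geq1$, the family $\mathcal A$ must avoid the upper shadow-complements of $\mathcal B$, and $\mathcal A$ is minimized-loss when $\mathcal B$ is a lex initial segment. The function $b\mapsto b+\binom{m}{k-1}-|\partial\text{-neighbourhood}|$ is then maximized at the endpoint $b=1$, by convexity along lex segments.
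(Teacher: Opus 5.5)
The paper quotes this theorem from \cite{HM1967} and gives no proof, so there is nothing internal to compare with; your proposal has to stand on its own.

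Your shifting step and Case~(b) are correct. A shift $s_{ij}$ that makes $\mathcal F$ trivial can only create the common point $i$, so every member meets $\{i,j\}$. The traces of $\mathcal F_{1\bar2}$ and $\mathcal F_{\bar12}$ are then nonempty cross-intersecting $(k-1)$-uniform families on $n-2\geq2(k-1)$ points. The Hilton--Milner sum theorem together with $|\mathcal F_{12}|\leq\binom{n-2}{k-2}$ gives exactly $h(n,k,1)$. You should cite that sum theorem rather than rely on your sketch, which is wrong as stated. The function $b\mapsto b+|\overline N_2^{[5]}(\mathcal L([5],2,b))|$ takes the values $8,7,7,8,7,7,8$ for $b=1,\ldots,7$. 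It is therefore not convex along lex segments, and the endpoint claim needs a real argument.

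The genuine gap is Case~(a) for $k\geq3$, which is where the substance of the theorem lies. (For $k=2$ your reduction is complete, since $[3,4]\in\mathcal F$ would force $\{1,2\}\in\mathcal F$.) Neither of your two routes works.

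\emph{First route.} The pair $(\mathcal F(1),\mathcal F(\bar1))$ has uniformities $k-1$ and $k$. Nonempty cross-intersecting families of this kind on $[2,n]$ can have total size $\binom{n-1}{k}-\binom{n-k}{k}+1$: take one $(k-1)$-set and all $k$-sets meeting it. For example this gives $17>13=h(7,3,1)$, and in general it has order $n^{k-1}$. To reach $h(n,k,1)$ you must use that $\mathcal F(\bar1)$ is itself intersecting (by shiftedness even $2$-intersecting), which a pure cross-intersection bound cannot see.

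\emph{Second route.} The reduction to Case~(b) when $[3,k+2]\notin\mathcal F$ is correct. The complementary subcase genuinely occurs: $\binom{[2k-1]}{k}$ is shifted, nontrivial, intersecting and covered by no pair. Your treatment of that subcase is not an argument.
\begin{itemize}
\item Such families need not lie in $\binom{[2k]}{k}$. The family $\{F\in\binom{[n]}5:|F\cap[7]|\geq4\}$ is shifted, nontrivial and intersecting, contains $[3,7]$, and also contains $\{1,2,3,4,n\}$.
\item What does follow is $\binom{[k+2]}k\subseteq\mathcal F$, hence $|F\cap[k+2]|\geq3$ for every member. This is too weak by counting alone: for $n=9$, $k=4$ there are $75$ such $4$-sets, while $h(9,4,1)=53$.
\item Your preliminary claim that every member meets $[2k-1]$ in at least $k$ points is false; for instance $\{1,2,7\}\in\mathcal H(7,3,1)$.
\end{itemize}

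One way to close the shifted case with this paper's tools is as follows. Since $\mathcal F\subseteq\overline N_k(\mathcal F)$, Lemma~\ref{lem:restricted-universe}(i) with $\ell=k$ gives $\mathcal F\subseteq\mathcal B(n,k,k)$. Theorem~\ref{sec123:cck} then gives $|\mathcal F|\leq|\overline N_k(\mathcal L_k([n],k,|\mathcal F|))|$. Suppose $|\mathcal F|\geq h(n,k,1)+1$. Monotonicity of this quantity in the size, together with Proposition~\ref{prop:restricted-lex-count} at $r=t=2$, bounds the right-hand side by $h(n,k,1)-\binom{n-k-2}{k-2}<|\mathcal F|$, a contradiction.
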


Two families $\mathcal A$ and $\mathcal B$ are \emph{cross-intersecting}
if $A\cap B\neq\emptyset$ for every $A\in\mathcal A$ and
$B\in\mathcal B$. Pyber \cite{Pyber} considered the product of the
sizes of two cross-intersecting families of the same uniformity.
Matsumoto and Tokushige established the following extension.
\begin{theorem}[Matsumoto--Tokushige \cite{MT1989}]\label{ckk}
Let $n\geq2\ell\geq2k$. If
$\mathcal A\subseteq\binom{[n]}k$ and
$\mathcal B\subseteq\binom{[n]}\ell$ are cross-intersecting, then
\[
|\mathcal A||\mathcal B|
\leq\binom{n-1}{k-1}\binom{n-1}{\ell-1}.
\]
\end{theorem}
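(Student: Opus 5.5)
The plan is to reduce the statement to lexicographic initial segments and then to a one-parameter optimization.

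\textbf{Reduction to lex segments.} By the standard shifting argument, the operations $S_{ij}$ preserve $|\mathcal A|$, $|\mathcal B|$ and the cross-intersecting property. A stronger fact is Hilton's lemma, a consequence of the Kruskal--Katona theorem. It says that if $\mathcal A$ and $\mathcal B$ are cross-intersecting, then so are $\mathcal L_k(|\mathcal A|)$ and $\mathcal L_\ell(|\mathcal B|)$. Here $\mathcal L_r(m)$ denotes the first $m$ members of $\binom{[n]}r$ in lexicographic order. So we may assume $\mathcal A=\mathcal L_k(a)$ and $\mathcal B=\mathcal L_\ell(b)$. For fixed $a$, the best choice of $b$ is
\[
g(a)=\bigl|\{B\in\tbinom{[n]}\ell : B\cap A\neq\emptyset\ \text{for all } A\in\mathcal L_k(a)\}\bigr|,
\]
and this set of ranks is itself a lex initial segment. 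The task becomes showing that $f(a)=a\,g(a)\le\binom{n-1}{k-1}\binom{n-1}{\ell-1}$ for $1\le a\le\binom nk$. By the symmetry of the roles (swapping $\mathcal A$ and $\mathcal B$ with the parameters), it suffices to treat $a\le\binom{n-1}{k-1}$. Otherwise $\mathcal A$ properly contains the full star at $1$, so $\mathcal B$ lies in the star of $1$ and meets a further set avoiding $1$; the other case then applies with the roles reversed.

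\textbf{Block decomposition.} For $a\le\binom{n-1}{k-1}$, every member of $\mathcal A$ contains $1$. Write the lex segment as a disjoint union of full blocks of the form $\{A : A\cap[1,j]=P\}$, where $P$ ranges over certain prefixes, plus a last partial block. Because $\mathcal L_k(a)$ is an initial segment, $g(a)$ can be computed explicitly. It is $\binom{n-1}{\ell-1}$ plus the number of $\ell$-subsets of $[2,n]$ that hit every set of $\mathcal A$. Concretely, $g$ is a sum of binomial coefficients $\binom{n-j}{\ell-j'}$ determined by the lex position of the last element of $\mathcal A$. The key comparison step is the following. Take a lex segment ending at some prefix, and extend it to the next coarser complete block, for example from ``all $k$-sets containing $[1,t]$'' toward ``all $k$-sets containing $1$''. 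We show that the gain in $a$ outweighs the loss in $g$ multiplicatively:
\[
\frac{a'}{a}\ \ge\ \frac{g(a)}{g(a')}.
\]
This reduces to ratio inequalities of the form $\binom{n-j}{k-j}\big/\binom{n-j}{\ell-i}$ that hold precisely because $n\ge2\ell\ge2k$. Iterating from any starting segment, the product is maximized at $a=\binom{n-1}{k-1}$. There $g(a)=\binom{n-1}{\ell-1}$, since any $\ell$-set avoiding $1$ misses some $k$-set containing $1$ when $n-1\ge k-1+\ell$, which holds as $n\ge 2\ell\ge k+\ell$.

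\textbf{Main obstacle.} I expect the main difficulty to be the monotonicity step for partial blocks, where $a$ is not a sum of complete prefix blocks. There $f$ need not be monotone and one must argue concavity-type behaviour. My first attempt would be induction on $n$ via the decomposition $\mathcal A=\mathcal A(\bar n)\cup\mathcal A(n)$ (sets avoiding and containing $n$), which is well behaved for shifted families. For shifted families, the pairs $(\mathcal A(n),\mathcal B(\bar n))$ and $(\mathcal A(\bar n),\mathcal B(n))$ are cross-intersecting on $[n-1]$. If $n-1\ge2\ell$, apply the induction hypothesis to these pairs and combine the bounds through the inequality
\[
(x_1+x_2)(y_1+y_2)\le\bigl(\sqrt{x_1y_1}+\sqrt{x_2y_2}\bigr)^2+\ldots,
\]
with Pascal's rule $\binom{n-1}{k-1}=\binom{n-2}{k-1}+\binom{n-2}{k-2}$. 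The base case $n=2\ell$ would be handled separately: there each $\ell$-set and its complement cannot both lie in $\mathcal B$ when $\mathcal A$ is a star-like family, and for $k=\ell$ this gives a direct pairing bound.
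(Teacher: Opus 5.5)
The paper quotes this theorem from Matsumoto--Tokushige and gives no proof, so there is nothing internal to compare with. Judged on its own, your proposal reduces the statement correctly but does not prove it. The following steps are fine: Hilton's lemma, passing to the full companion $g(a)$ (itself a lexicographic segment), and the reduction to $a\le\binom{n-1}{k-1}$. The last needs the main step for both orders of $(k,\ell)$, which the symmetric hypothesis $n\ge2\max\{k,\ell\}$ allows. After these reductions, the whole content of the theorem is the inequality $a\,g(a)\le\binom{n-1}{k-1}\binom{n-1}{\ell-1}$ for \emph{every} lexicographic segment, and you never establish it. The comparison $a'/a\ge g(a)/g(a')$ is only asserted, and only between segments that are unions of complete prefix blocks. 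For a segment ending in a partial block, where you note yourself that $f$ need not be monotone, there is no argument, so ``iterating from any starting segment'' is unjustified.

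The fallback induction does not repair this. The displayed inequality points the wrong way: by Cauchy--Schwarz, $(\sqrt{x_1y_1}+\sqrt{x_2y_2})^2\le(x_1+x_2)(y_1+y_2)$, so bounds on the two cross pairs alone cannot control the product. The correct step is simpler. For shifted families, all four pairs $(\mathcal A(\bar n),\mathcal B(\bar n))$, $(\mathcal A(\bar n),\mathcal B(n))$, $(\mathcal A(n),\mathcal B(\bar n))$, $(\mathcal A(n),\mathcal B(n))$ are cross-intersecting on $[n-1]$. For the last pair: if $A\cap B=\{n\}$, then $n\ge k+\ell$ leaves a point of $[n-1]$ outside $A\cup B$, and shifting $n$ to it gives a member of $\mathcal A$ disjoint from $B$. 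Pascal's rule then makes the four inductive bounds sum exactly to $\binom{n-1}{k-1}\binom{n-1}{\ell-1}$ whenever $n\ge2\max\{k,\ell\}+1$.

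This pushes the entire content of the theorem into the base cases $n=2\max\{k,\ell\}$, and there you treat only $k=\ell$, via $F\in\mathcal A\Rightarrow[n]\setminus F\notin\mathcal B$, giving $|\mathcal A|+|\mathcal B|\le\binom{2k}{k}$. That is not enough even for $k=\ell$: from $(n,k,\ell)=(2k+1,k,k)$, the pair $(\mathcal A(\bar n),\mathcal B(n))$ already lands in the unequal base case $(2k,k,k-1)$. For $n=2\ell>2k$, the obvious substitute is too weak. Here $\mathcal A$ avoids the $(\ell-k)$-shadow of $\{[n]\setminus B:B\in\mathcal B\}$, so normalized matching gives $|\mathcal A|/\binom{2\ell}{k}+|\mathcal B|/\binom{2\ell}{\ell}\le1$. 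This yields only $|\mathcal A||\mathcal B|\le\frac14\binom{2\ell}{k}\binom{2\ell}{\ell}$, a factor $\ell/k$ above the target $\frac{k}{4\ell}\binom{2\ell}{k}\binom{2\ell}{\ell}$. An exact Kruskal--Katona analysis is still needed, which is the partial-block estimate left open in your first approach. As written, the proposal is a reduction rather than a proof.
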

We consider the corresponding problem when both families are
nontrivial. For equal uniformities, Frankl and Wang proved the
Hilton--Milner product bound for $k\geq8$ and $n\geq4k$
\cite{FW2023}, and subsequently extended it to $n\geq2k+1$
\cite{FWII}. Frankl \cite{Frankl3Graphs2026} settled the case $k=3$.
Huang \cite{Huang2026} established the bound for
$k\geq3$ and $n\geq2k$. For unequal uniformities, constructions
exceeding the natural Hilton--Milner product were given in
\cite{Huang2026,Bu2026}. The sharp bound therefore requires an
additional extremal construction.

For $n\geq2\ell\geq2k\geq6$, with $n>k+\ell$, set
\begin{align*}
M_1(n,k,\ell)
&=\left(\binom{n-1}{k-1}-\binom{n-\ell-1}{k-1}+1\right)
  \left(\binom{n-1}{\ell-1}-\binom{n-k-1}{\ell-1}+1\right),\\
M_2(n,k,\ell)
&=\left(\binom{n-2}{\ell-2}+1\right)
  \sum_{i=1}^{2}\left(\binom{n-i}{k-1}-\binom{n-\ell-i}{k-1}\right).
\end{align*}
Both quantities arise from nontrivial cross-intersecting pairs.
For $M_1$, choose $X\in\binom{[2,n]}k$ and
$Y\in\binom{[2,n]}\ell$ with $X\cap Y\neq\emptyset$, and take
\begin{align*}
\mathcal A&=\{A\in\tbinom{[n]}k:1\in A,\ A\cap Y\neq\emptyset\}
                \cup\{X\},\\
\mathcal B&=\{B\in\tbinom{[n]}\ell:1\in B,\ B\cap X\neq\emptyset\}
                \cup\{Y\}.
\end{align*}
Cross-intersection follows directly from the definitions. Each family
is nontrivial: its exceptional member avoids $1$, and, for every
$x\neq1$, its star part contains a member avoiding $x$.
Counting the star parts gives product $M_1(n,k,\ell)$.
For $M_2$, take
\begin{align*}
\mathcal A&=\{A\in\tbinom{[n]}k:
 A\cap[2]\neq\emptyset,\ A\cap[3,\ell+2]\neq\emptyset\},\\
\mathcal B&=\{B\in\tbinom{[n]}\ell:[2]\subseteq B\}
                  \cup\{[3,\ell+2]\}.
\end{align*}
These families are also nontrivial and cross-intersecting.
Proposition~\ref{prop:two-point-construction} gives the counting
argument in greater generality.

The sharp product bound is as follows.
\begin{theorem}\label{mthm}
Let $n\geq2\ell\geq2k\geq6$, with $n>k+\ell$.
If $\mathcal A\subseteq\binom{[n]}k$ and
$\mathcal B\subseteq\binom{[n]}\ell$ are nonempty, nontrivial
cross-intersecting families, then
\[
 |\mathcal A||\mathcal B|
 \leq\max\{M_1(n,k,\ell),M_2(n,k,\ell)\}.
\]
Equality holds if and only if one of the following holds:
\begin{enumerate}
\item[\rm(i)] $M_1(n,k,\ell)\geq M_2(n,k,\ell)$, and there are
$x\in[n]$, $X\in\binom{[n]\setminus\{x\}}k$ and
$Y\in\binom{[n]\setminus\{x\}}\ell$ with
$X\cap Y\neq\emptyset$ such that
\begin{align*}
 \mathcal A&=\{A\in\tbinom{[n]}k:x\in A,\ A\cap Y\neq\emptyset\}
                   \cup\{X\},\\
 \mathcal B&=\{B\in\tbinom{[n]}\ell:x\in B,\ B\cap X\neq\emptyset\}
                   \cup\{Y\}.
\end{align*}
\item[\rm(ii)] $M_2(n,k,\ell)\geq M_1(n,k,\ell)$, and there are
disjoint sets $R\in\binom{[n]}2$ and
$Y\in\binom{[n]}\ell$ such that
\begin{align*}
 \mathcal A&=\{A\in\tbinom{[n]}k:
                  A\cap R\neq\emptyset,\ A\cap Y\neq\emptyset\},\\
 \mathcal B&=\{B\in\tbinom{[n]}\ell:R\subseteq B\}\cup\{Y\}.
\end{align*}
\item[\rm(iii)] $k=\ell=3$, and there are $x\in[n]$ and
$P,Q\in\binom{[n]\setminus\{x\}}2$ with
$P\cap Q\neq\emptyset$ such that
\begin{align*}
 \mathcal A&=\{A\in\tbinom{[n]}3:x\in A,\ A\cap Q\neq\emptyset\}
       \cup\{A\in\tbinom{[n]}3:x\notin A,\ P\subseteq A\},\\
 \mathcal B&=\{B\in\tbinom{[n]}3:x\in B,\ B\cap P\neq\emptyset\}
       \cup\{B\in\tbinom{[n]}3:x\notin B,\ Q\subseteq B\}.
\end{align*}
\end{enumerate}
When $M_1(n,k,\ell)=M_2(n,k,\ell)$, both (i) and (ii)
are included. In (iii), the two pairs $P,Q$ need not coincide.
\end{theorem}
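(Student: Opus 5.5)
The plan is to reduce to shifted families, split by the structure of the shifted pair, and then undo the shifting to recover the equality cases. This follows the outline announced in the abstract.

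\emph{Reduction to shifted families.} Given a nontrivial cross-intersecting pair $(\mathcal A,\mathcal B)$, I apply shifts $S_{ij}$ ($i<j$) to both families at once. These preserve sizes and cross-intersection. When a shift would destroy nontriviality, I stop just before it. The pair at that moment is \emph{critically shifted}: it is nontrivial, and some further shift $S_{ij}$ makes one family trivial. In that situation every member of one family, say $\mathcal A$, meets $\{i,j\}$. After relabelling, the families can be taken shifted on $[n]\setminus\{i,j\}$ in a controlled sense. I may also assume $\mathcal A$ and $\mathcal B$ are maximal, that is, $\mathcal B$ is exactly the set of $\ell$-sets meeting every member of $\mathcal A$, and symmetrically.

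\emph{Case split.} In the critically shifted situation, split $\mathcal A=\mathcal A_1\cup\mathcal A_2$ according to whether the member contains $1$. Two subcases arise.

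\textbf{(a)} All but a few members contain a common element $x$, the Hilton--Milner regime. Here $\mathcal B$ is contained in the disjointness complement of the exceptional part. The restricted-lex comparison theorem bounds the neighborhoods: among families of a given size avoiding $x$, restricted lexicographic initial segments minimize the disjointness neighborhood, so they maximize the cross-intersecting partner. This turns $|\mathcal A||\mathcal B|$ into a one-variable function $f(t)=|\mathcal A(t)|\,|\mathcal B(t)|$, where $t$ is the number of sets in $\mathcal A$ avoiding $x$ (or symmetrically in $\mathcal B$). I will show that $f$ is maximized at an endpoint. At $t=1$ it gives $M_1$. At the other end the non-$x$ part of $\mathcal B$ is forced to be a single set, which is a two-point structure and is dominated by $M_2$.

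\textbf{(b)} The pair decomposes into two parts whose cores are disjoint, the two-point regime: $\mathcal A$ is generated by $R=\{1,2\}$ together with a transversal condition. I invoke the decomposition product theorem, which needs no nontriviality hypothesis. It gives $|\mathcal A||\mathcal B|\le M_2$ with equality only for configuration (ii), plus, when $k=\ell=3$, the cross-triangle configuration (iii).

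\emph{Endpoint comparisons.} The main computational work is comparing the endpoint values of $f$ with $\max\{M_1,M_2\}$ for all $n\ge 2\ell\ge 2k\ge 6$ with $n>k+\ell$. I will handle this with the Pascal identities $\binom{n-1}{k-1}-\binom{n-\ell-1}{k-1}=\sum_{j=1}^{\ell}\binom{n-1-j}{k-2}$. I expect this to be the main obstacle. The boundary cases $n=k+\ell+1$ and $k=3$ are delicate, since there the intermediate values of $t$ come close to the maximum, and near-ties between $M_1$ and $M_2$ must be resolved exactly. For these I would first verify the comparison by a direct convexity argument in $t$, using the explicit restricted-lex neighborhood sizes. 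If that fails I would fall back on a separate treatment of the small cases $k=3$.

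\emph{Equality.} Suppose equality holds. By the equality parts of the comparison and decomposition theorems, the final shifted pair is one of (i), (ii), (iii). I then run reverse shifting: if $S_{ij}(\mathcal A,\mathcal B)$ has one of these forms and $(\mathcal A,\mathcal B)$ is nontrivial and cross-intersecting of the same sizes, then $(\mathcal A,\mathcal B)$ itself has that form up to the transposition $(i\,j)$. This I check family by family, using maximality of each family as the cross-intersecting partner of the other.
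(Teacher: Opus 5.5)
Your ingredients are the paper's, but the case split is wrong and the critical case lacks its key structure.

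The restricted-lex comparison needs the family to lie in the restricted universe $\mathcal B(n,k,\ell)$. The paper gets this only from \emph{full} shiftedness plus nontriviality of the companion (Lemma~\ref{lem:restricted-universe}(i)). You give no substitute for a critically shifted family, so subcase (a) cannot be run ``in the critically shifted situation''. Meanwhile the fully shifted terminal pair, where the comparison is available and all of the one-variable analysis lives, never appears as a case of its own.

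Conversely, the critical case needs no Hilton--Milner subcase, but it needs more than your normalization provides.
\begin{itemize}
\item Stopping just before the first trivializing shift may leave other shifts that still change the pair. The paper keeps applying every nontriviality-preserving shift until each $s_{ij}$ fixes both families or trivializes one.
\item It then uses saturation to show that only one family has a critical pair (Lemma~\ref{lem:critical-separation}).
\item Finally it extracts disjoint cores $R,S$ with $|R|+|S|\ge k+2$ from the intersections $C,D$ of the two single-point sections (Lemma~\ref{lem:two-cores}).
\end{itemize}
Your ``shifted on $[n]\setminus\{i,j\}$'' cannot hold literally. The two single-point sections at a critical pair are nonempty and disjoint (Lemma~\ref{lem:critical-sections}), yet shiftedness on $[n]\setminus\{i,j\}$ would put the first $(k-1)$-subset of that set into both. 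With the cores in hand, Theorem~\ref{thm:two-core-product} gives the bound $F(n,k,\ell)$ if the critical family is $\mathcal A$; this is $<M_1$ by Lemma~\ref{lem:orientation}. If the critical family is $\mathcal B$ it gives $F(n,\ell,k)=M_2$. Equality holds only for the two-point construction, so (iii) does not come out of the decomposition theorem.

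In the shifted case the comparison depends only on $m=|\mathcal A|$, and it is only effective above the threshold. For $m\le S$ the lexicographic family is a star whose companion contains every $\ell$-set through $1$. You therefore need the dichotomy the paper uses: if $|\mathcal A|\le S+1$ and $|\mathcal B|\le T+1$ the product is at most $M_1$; otherwise apply the comparison to a family above its threshold, possibly $\mathcal B$ in the reversed orientation.

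Nor is the resulting one-variable function maximized at an end of its range. Take $(n,k,\ell)=(20,3,10)$. Its value at $m=2S-J=252$, the end of the second layer, is $252\cdot43760=11{,}027{,}520$. This exceeds both $M_1=11{,}007{,}704$ at $m=S+1$ and $660\cdot11$ for the whole restricted universe. The paper bounds the function by $\max\{(S+1)(T+1),(2S-J)(K+2)\}$ through three separate arguments:
\begin{itemize}
\item a chord estimate on the first block;
\item the density inequality plus a sign-change argument on the later blocks;
\item a normalized-matching decay estimate beyond the second layer.
\end{itemize}
It then compares $(2S-J)(K+2)$ with $M_2$ or $M_1$ according to orientation.

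Finally, (iii) arises in the shifted case at $|\mathcal A|=|\mathcal B|=3n-8$, where the terminal pair has $P=Q$. Unshifting can separate $P$ from $Q$: $s_{34}$ maps the pair with $x=1$, $P=\{2,3\}$, $Q=\{2,4\}$ to the one with $P=Q=\{2,3\}$. So the preimage is neither the shifted pair nor its image under $(i\,j)$, and your reverse-shifting claim fails as stated. The paper avoids reverse shifting here. Sizes are preserved, so the original saturated pair already has sizes $S+1,T+1$, and Proposition~\ref{prop:threshold-equality} is applied to it directly; that proposition rests on F\"uredi's classification and, for $k=\ell=3$, Huang's. Reverse shifting is used only for (ii), via the coloring argument of Lemma~\ref{lem:inverse-two-point}.
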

In Section~\ref{sec:shifting}, we establish the shifting reduction
and prove the restricted-lex comparison directly by induction
on the last nonempty layer, using column exchanges and Pascal
refinement in an upper triangular matrix. Section~\ref{sec:critical-revised} proves a sharp
product bound, with equality characterized, for families admitting
two disjoint cores (Theorem~\ref{thm:two-core-product}); the
two-core decomposition of critically shifted families then yields
Corollary~\ref{critically} and settles the critical case of
Theorem~\ref{mthm}.
Section~\ref{sec:shifted-product} applies the restricted-lex
comparison to shifted pairs and completes the proof, including
all equality cases. At the threshold, we use F\"uredi's
classification~\cite{Furedi1995} for $k+\ell\geq7$ and the
cross-triangle classification in~\cite{Huang2026} for $k=\ell=3$.
The distinction is necessary: the pairs $P,Q$ in
Theorem~\ref{mthm}(iii) need not be equal.

\section{Shifting and lexicographic reductions}\label{sec:shifting}
\subsection{Sections, neighborhoods, and simultaneous shifts}
For disjoint $P,Q\subseteq[n]$ and a family $\mathcal F$, define
\begin{align*}
\mathcal F(P,\overline Q)
 &=\{F\setminus P:F\in\mathcal F,\ F\cap(P\cup Q)=P\},\\
\mathcal F\langle P,\overline Q\rangle
 &=\{F\in\mathcal F:F\cap(P\cup Q)=P\},\\
\mathcal F(P,\widetilde Q)
 &=\{F\setminus P:F\in\mathcal F,\ P\subseteq F,\ Q\nsubseteq F\},\\
\mathcal F\langle P,\widetilde Q\rangle
 &=\{F\in\mathcal F:P\subseteq F,\ Q\nsubseteq F\}.
\end{align*}
We omit an empty restriction and write, for example,
$\mathcal F(P)=\mathcal F(P,\overline\emptyset)$ and
$\mathcal F(\overline Q)=\mathcal F(\emptyset,\overline Q)$.
Strings such as $12$ denote sets of distinct displayed elements.
Round brackets remove the prescribed elements; angle brackets retain
them.

For a family $\mathcal F\subseteq2^U$, define its disjointness
neighborhood and its cross-intersecting companion by
\begin{align*}
N_q^U(\mathcal F)
 &=\{G\in\tbinom Uq:G\cap F=\emptyset\text{ for some }F\in\mathcal F\},\\
\overline N_q^U(\mathcal F)&=\binom Uq\setminus N_q^U(\mathcal F).
\end{align*}
We omit $U$ when $U=[n]$. In particular, membership in $N_q^U$
always refers to disjointness. A pair is cross-intersecting if and
only if $\mathcal B\subseteq\overline N_\ell(\mathcal A)$.
A cross-intersecting pair is called \emph{saturated} if
each family is the full companion of the other:
\begin{equation}\label{eq:saturation}
\mathcal A=\overline N_k(\mathcal B),\qquad
\mathcal B=\overline N_\ell(\mathcal A).
\end{equation}
Every nontrivial cross-intersecting pair of maximum product is
saturated: adjoining an admissible member preserves nontriviality and
strictly increases the product when the other family is nonempty.

For $1\leq i<j\leq n$ and $F\in\mathcal F$, set
\[
s_{ij}^{\mathcal F}(F)=
\begin{cases}
(F\setminus\{j\})\cup\{i\},&
 j\in F,\ i\notin F,\ (F\setminus\{j\})\cup\{i\}\notin\mathcal F,\\
F,&\text{otherwise},
\end{cases}
\]
and let $s_{ij}(\mathcal F)=\{s_{ij}^{\mathcal F}(F):F\in\mathcal F\}$.
Shifting preserves cardinality and uniformity; simultaneous shifting
preserves cross-intersection \cite{Fra1987}. A family is
\emph{shifted on $D$} if it is fixed by every $s_{ij}$ with
$i,j\in D$ and $i<j$. It is \emph{shifted} if $D=[n]$.

A nontrivial cross-intersecting pair $(\mathcal A,\mathcal B)$ is
\emph{critically shifted} if at least one family is not shifted and,
for every $i<j$, either both families are fixed by $s_{ij}$ or at
least one of $s_{ij}(\mathcal A),s_{ij}(\mathcal B)$ is trivial.
This is a property of the pair. It does not specify which family
becomes trivial under a particular shift.
\begin{lemma}[Normalization]\label{lem:normalization}
Any nontrivial cross-intersecting pair can be transformed, without
changing either cardinality, into a shifted pair or a critically
shifted pair. Moreover, a product-maximizing pair can be chosen to
satisfy \eqref{eq:saturation} and this alternative.
\end{lemma}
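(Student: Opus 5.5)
We run a greedy shifting process on the pair and control it by a potential function. Start with a nontrivial cross-intersecting pair $(\mathcal A,\mathcal B)$. As long as some pair $i<j$ exists for which not both families are fixed by $s_{ij}$, and both $s_{ij}(\mathcal A)$ and $s_{ij}(\mathcal B)$ are nontrivial, replace $(\mathcal A,\mathcal B)$ by $(s_{ij}(\mathcal A),s_{ij}(\mathcal B))$.

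Each such step preserves the properties we need:
\begin{enumerate}
\item[(a)] cardinalities and uniformities, by the standard properties of $s_{ij}$;
\item[(b)] cross-intersection, by the simultaneous shifting result of \cite{Fra1987};
\item[(c)] nontriviality, by the choice of $(i,j)$.
\end{enumerate}

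For termination, use the weight $w(\mathcal A,\mathcal B)=\sum_{A\in\mathcal A}\sum_{a\in A}a+\sum_{B\in\mathcal B}\sum_{b\in B}b$. A shift $s_{ij}$ replaces $j$ by $i<j$ in every moved set. If not both families are fixed, at least one set moves, so $w$ drops by at least $j-i\geq1$. Since $w$ is a positive integer, the process stops. When it stops, one of two things holds. Either both families are fixed by every $s_{ij}$, so the pair is shifted. Or some family is not shifted, and then every pair $i<j$ with a nonfixed family has $s_{ij}(\mathcal A)$ or $s_{ij}(\mathcal B)$ trivial; this is exactly the definition of critically shifted. This proves the first assertion.

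For the second assertion, take a product-maximizing nontrivial pair. By the remark preceding the lemma, it is saturated: otherwise we could adjoin an admissible member, keeping nontriviality and strictly increasing the product, since both families are nonempty. Now apply the process above. The resulting pair has the same cardinalities, so it is again nontrivial, cross-intersecting and product-maximizing. Hence it is saturated by the same remark, and it is either shifted or critically shifted.

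The one point needing care is that saturation is not preserved by an individual shift, so saturation of the final pair cannot be inferred along the way. It must be re-derived from maximality at the end, which is why we invoke the remark only on the terminal pair. This step is easy but must be done in this order.
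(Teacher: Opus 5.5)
Your proposal is correct and follows the paper's proof. Both use greedy simultaneous shifts restricted to those that change a family and keep both families nontrivial, the same integer weight for termination, and saturation recovered from product-maximality. The paper observes that every intermediate pair is again maximizing and hence saturated, so your warning that saturation must be re-derived only at the end is harmless but not needed.
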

\begin{proof}
Apply a simultaneous shift whenever it changes at least one family
and preserves the nontriviality of both. Each such step strictly
decreases the integer weight
$\sum_{A\in\mathcal A}\sum_{x\in A}x+
 \sum_{B\in\mathcal B}\sum_{x\in B}x$.
The procedure therefore terminates, and its terminal pair has the
required property. If the original pair maximizes the product, every
pair in the procedure also does so and is saturated by
\eqref{eq:saturation}.
\end{proof}

\subsection{The Kruskal--Katona theorem and its dual form}
For distinct sets $E,F$ of the same size, define the two orders by
\[
E<_{\rm lex}F\iff\min(E\triangle F)\in E,\qquad
E<_{\rm colex}F\iff\max(E\triangle F)\in F.
\]
Let $\mathcal L(U,k,m)$ be the first $m$
$k$-subsets of the ordered set $U$ in lexicographic order.
For a $p$-uniform family $\mathcal G$, put
$\partial^0\mathcal G=\mathcal G$ and
$\partial\mathcal G=\{H:|H|=p-1,\ H\subseteq G
\text{ for some }G\in\mathcal G\}$;
iterate this operation to define $\partial^d\mathcal G$.

Every positive integer $m$ has a unique $p$-binomial expansion
\[
m=\sum_{j=t}^{p}\binom{b_j}{j},\qquad
b_p>b_{p-1}>\cdots>b_t\geq t\geq1.
\]
\begin{theorem}[Kruskal--Katona \cite{Kruskal,Katona68}]\label{thm:KK}
If $|\mathcal G|=m$ and $\mathcal G$ is $p$-uniform, then, for
$0\leq d\leq p$,
\[
|\partial^d\mathcal G|\geq
\sum_{j=t}^{p}\binom{b_j}{j-d}.
\]
Equality is attained by an initial segment in colexicographic order.
For $1\leq d<p$, M\"ors' equality criterion
\cite[Theorem~7]{Mors1985} gives uniqueness up to isomorphism if
\[
t>d,\qquad\text{or}\qquad
m=\binom{b_p+1}{p}-1,\qquad\text{or}\qquad m\leq p+1.
\]
\end{theorem}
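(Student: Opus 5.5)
The inequality is the classical Kruskal--Katona bound, so I would prove it by reducing to $d=1$ and then using shifting and induction. The equality criterion is a separate, more delicate matter, and there I would rely on M\"ors' analysis. The reduction rests on one observation. If $\mathcal C$ is the initial colex segment of size $m=\sum_{j=t}^p\binom{b_j}{j}$, then $\partial\mathcal C$ is again an initial colex segment, of size $\sum_{j=t}^p\binom{b_j}{j-1}$. The expansion of this size is obtained by lowering every lower index by one, and any term with $j-1=0$ is handled by the convention in the statement. Once the case $d=1$ is known, we get $|\partial\mathcal G|\geq|\partial\mathcal C|$. Monotonicity of the lower bound in $m$ then lets us replace $\partial\mathcal G$ by a colex segment of at least the same size. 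Iterating $d$ times gives the stated bound for $\partial^d\mathcal G$, and shows that colex segments attain it.

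For $d=1$ I would follow Frankl's shifting proof. First, apply the shifts $s_{ij}$ with $i<j$. For a $p$-uniform family one checks $\partial s_{ij}(\mathcal G)\subseteq s_{ij}(\partial\mathcal G)$, so shifting preserves $|\mathcal G|$ and does not increase the shadow. We may therefore assume $\mathcal G$ is shifted.

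Next, induct on $p$ and on $m$, splitting off the element $1$. Write $\mathcal G_0=\{G:1\notin G\}$ and $\mathcal G_1=\{G\setminus\{1\}:1\in G\}$. Shiftedness gives $\partial\mathcal G_0\subseteq\mathcal G_1$. It follows that
\[
|\partial\mathcal G|\geq|\mathcal G_1|+|\partial\mathcal G_1|,
\qquad
m\leq |\mathcal G_1|+|\partial\mathcal G_1|.
\]
If $|\mathcal G_1|\geq\binom{b_p-1}{p-1}+\cdots+\binom{b_t-1}{t-1}$, then the first inequality together with the induction hypothesis for $p-1$ applied to $\mathcal G_1$ finishes the argument. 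Otherwise, the second inequality and induction force $m$ to be too small, which is a contradiction. The remaining work is the routine binomial bookkeeping: expansions behave monotonically under Pascal's rule.

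For the uniqueness clause with $1\leq d<p$, I would trace equality back through the argument. Equality for $\partial^d$ forces equality at every intermediate shadow, so it suffices to understand equality in $|\partial\mathcal G|$ under each hypothesis.

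In the case $m\leq p+1$, $m$ sets have shadow of minimum size exactly when they all lie in a common $(p+1)$-set. This can be checked directly, since two $p$-sets share at most one $(p-1)$-subset.

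In the case $m=\binom{b_p+1}{p}-1$, equality forces $\mathcal G$ to be a complete $\binom{[b_p+1]}{p}$ with one set removed, up to isomorphism. The tool here is the Lov\'asz version of the bound: $|\mathcal G|=\binom xp$ implies $|\partial\mathcal G|\geq\binom x{p-1}$, with equality only for complete families when $x$ is an integer.

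The case $t>d$ is the real obstacle. Shifting is not an isomorphism, so equality for the shifted family says nothing directly about $\mathcal G$. One must show that an extremal family has a ground set of size $b_p+1$ and a vertex lying in exactly the sets prescribed by the colex structure, and then recurse on the link and the complement. This is precisely the content of \cite[Theorem~7]{Mors1985}. I would not reprove it. Instead I would quote it, after verifying that our three alternatives coincide with M\"ors' hypotheses under the indexing of the $p$-binomial expansion used here.
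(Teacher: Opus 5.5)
The paper gives no proof of this theorem. It quotes the bound from Kruskal and Katona and the uniqueness clause from M\"ors' Theorem~7. Your reduction from $d$ to $d=1$ through nested colex shadows is sound, and so is the shifting step. The inductive step of the $d=1$ argument, however, fails as written.

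\textbf{The inductive step.} The second displayed inequality, $m\le|\mathcal G_1|+|\partial\mathcal G_1|$, is false. For $\mathcal G=\binom{[n]}p$ it reads $\binom np\le\binom n{p-1}$, which fails whenever $2p<n+1$. Even if it held, using it to show that $m$ is too small would need an upper bound on $|\partial\mathcal G_1|$, and induction gives none. What Frankl's proof actually uses is the inclusion $\partial\mathcal G_0\subseteq\mathcal G_1$, which you already noted, together with $m=|\mathcal G_0|+|\mathcal G_1|$. Suppose $|\mathcal G_1|<\sum_j\binom{b_j-1}{j-1}$. Then $|\mathcal G_0|>\sum_j\binom{b_j-1}{j}$. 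The induction hypothesis applied to $\mathcal G_0$, a family on $[2,n]$, gives $|\partial\mathcal G_0|\ge\sum_j\binom{b_j-1}{j-1}>|\mathcal G_1|$, which contradicts the inclusion.

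\textbf{The equality part.} It has two further problems.
\begin{enumerate}
\item Your claim that equality for $\partial^d$ forces equality at every intermediate shadow is false in general. Take $p=3$, $d=2$, $m=5=\binom43+\binom22$, and the family $\{123,234,345,145,125\}$. It covers the minimum of $5$ points, but its shadow has $10>8$ pairs. Passing to $d=1$ therefore needs its own argument under each hypothesis. For $t>d$ the claim does hold, because adding the next colex set to $\partial^i\mathcal C$ strictly enlarges its $(d-i)$-fold shadow. You would still have to check this, and the same argument gives nothing for $m\le p+1$ with $t\le d$.
\item The Lov\'asz equality case does not cover $m=\binom{b_p+1}p-1$. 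This $m$ is not $\binom xp$ for any integer $x$. For the real $x$ with $\binom xp=m$, the bound $\binom x{p-1}$ lies strictly below the exact value $\binom{b_p+1}{p-1}$, so it gives no structural information.
\end{enumerate}
Your Bonferroni argument for $m\le p+1$ at $d=1$ is fine.

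Since the statement itself attributes uniqueness to M\"ors, the clean route is the paper's. Cite M\"ors' Theorem~7 for the whole uniqueness clause, and check only that its hypotheses translate to $t>d$, $m=\binom{b_p+1}p-1$ and $m\le p+1$ in the present indexing of the $p$-binomial expansion.
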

\begin{theorem}[Dual Kruskal--Katona theorem]\label{cokk}
Let $n\geq k+\ell$ and $\mathcal A\subseteq\binom{[n]}k$ be
nonempty. Write $m=|\mathcal A|$ in its $(n-k)$-binomial expansion as
$m=\sum_{j=t}^{n-k}\binom{b_j}{j}$. Then
\[
|N_\ell(\mathcal A)|\geq
\sum_{j=t}^{n-k}\binom{b_j}{j-(n-k-\ell)}
=|N_\ell(\mathcal L([n],k,|\mathcal A|))|.
\]
If $n>k+\ell$ and at least one of
\[
t>n-k-\ell,\qquad
m=\binom{b_{n-k}+1}{n-k}-1,\qquad m\leq n-k+1
\]
holds, equality holds if and only if
$\mathcal A\cong\mathcal L([n],k,m)$.
\end{theorem}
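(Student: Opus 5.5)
The plan is to derive Theorem~\ref{cokk} from Theorem~\ref{thm:KK} by complementation. Set $p=n-k$ and $\mathcal G=\{[n]\setminus A:A\in\mathcal A\}$, a $p$-uniform family with $|\mathcal G|=m$. An $\ell$-set $G$ is disjoint from some $A\in\mathcal A$ exactly when $G\subseteq[n]\setminus A$. Hence $N_\ell(\mathcal A)$ is the $\ell$-shadow of $\mathcal G$, that is, $N_\ell(\mathcal A)=\partial^d\mathcal G$ with $d=p-\ell=n-k-\ell$. The hypothesis $n\geq k+\ell$ gives $0\leq d\leq p$. Theorem~\ref{thm:KK} then yields
\[
|N_\ell(\mathcal A)|\geq\sum_{j=t}^{n-k}\binom{b_j}{j-(n-k-\ell)},
\]
with the convention $\binom ab=0$ for $b<0$ handling the terms with $j<d$.

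Next I would identify the extremal family. Complementation reverses the relevant order: $E<_{\rm lex}F$ holds iff $\min(E\triangle F)\in E$. Since $E\triangle F=\overline E\triangle\overline F$, this is equivalent to $\min(\overline E\triangle\overline F)\in\overline F$. Apply the order-reversing relabeling $x\mapsto n+1-x$. Then $\min$ becomes $\max$, and the condition reads $\max\in\overline F'$, i.e.\ $\overline E'<_{\rm colex}\overline F'$. So the first $m$ sets in lex order of $\binom{[n]}k$ correspond, under complement followed by reversal, to the first $m$ sets in colex order of $\binom{[n]}{p}$. Both operations preserve shadow sizes (the reversal is an isomorphism). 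Therefore $\mathcal L([n],k,m)$ attains the bound, which gives the displayed equality $|N_\ell(\mathcal L([n],k,|\mathcal A|))|=\sum_j\binom{b_j}{j-d}$.

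For uniqueness, assume $n>k+\ell$, so $d\geq1$. Also $d<p$ because $\ell\geq1$. The three listed conditions are exactly Mörs' conditions $t>d$, $m=\binom{b_p+1}{p}-1$, and $m\leq p+1$. Under any of them, equality forces $\mathcal G$ to be isomorphic to the colex initial segment. Undoing the complement and the reversal, $\mathcal A$ is isomorphic to $\mathcal L([n],k,m)$. The converse holds because isomorphisms preserve neighborhood sizes.

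The main point needing care is the lex/colex correspondence under complement and reversal, including the direction of the order. A secondary check is that $d=0$ (the case $n=k+\ell$) is excluded from the uniqueness claim, since there the shadow is trivial and Mörs' criterion is stated only for $1\leq d<p$.
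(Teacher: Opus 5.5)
Your proposal is correct and matches the paper's proof: complementation identifies $N_\ell(\mathcal A)$ with $\partial^{n-k-\ell}$ of the complementary family, complement followed by $i\mapsto n+1-i$ sends initial lex segments to initial colex segments, and M\"ors' criterion is applied with $p=n-k$ and $d=n-k-\ell$. Your explicit verification of the lex/colex correspondence, and your check that $1\leq d<p$ (which uses $n>k+\ell$ and the implicit assumption $\ell\geq1$), supply details the paper states without proof.
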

\begin{proof}
Let $\mathcal A^c=\{[n]\setminus A:A\in\mathcal A\}$.
Then $N_\ell(\mathcal A)=\partial^{n-k-\ell}(\mathcal A^c)$,
so the inequality follows from Kruskal--Katona.
Complementation followed by the relabeling $i\mapsto n+1-i$ maps
an initial lexicographic segment of $k$-sets onto an initial
colexicographic segment of $(n-k)$-sets. This proves the equality
asserted for $\mathcal L([n],k,|\mathcal A|)$.
The uniqueness statement follows from M\"ors' criterion in
Theorem~\ref{thm:KK}, applied to $\mathcal A^c$ with
$p=n-k$ and $d=n-k-\ell$.
\end{proof}
\begin{theorem}[Hilton's lemma \cite{Hil1976}]\label{lem1}
If $\mathcal A\subseteq\binom{[n]}k$ and
$\mathcal B\subseteq\binom{[n]}\ell$ are cross-intersecting, then
$\mathcal L([n],k,|\mathcal A|)$ and
$\mathcal L([n],\ell,|\mathcal B|)$ are cross-intersecting.
\end{theorem}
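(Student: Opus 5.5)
The plan is to reduce Hilton's lemma to the dual Kruskal--Katona theorem (Theorem~\ref{cokk}), together with one structural fact about initial lexicographic segments. Write $a=|\mathcal A|$, $b=|\mathcal B|$, $\mathcal L_A=\mathcal L([n],k,a)$ and $\mathcal L_B=\mathcal L([n],\ell,b)$.

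First dispose of the degenerate cases. If $a=0$ or $b=0$ there is nothing to prove. If $n<k+\ell$, any $k$-set meets any $\ell$-set, so every pair is cross-intersecting. We may therefore assume $n\geq k+\ell$, which is the range where Theorem~\ref{cokk} applies.

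The key structural fact is the following: \emph{the family $\overline N_\ell(\mathcal L_A)$ is itself an initial lexicographic segment of $\binom{[n]}\ell$.} To see this, let $G\in\overline N_\ell(\mathcal L_A)$ and let $G'<_{\rm lex}G$. Suppose some $F\in\mathcal L_A$ is disjoint from $G'$. We need a member of $\mathcal L_A$ disjoint from $G$, and we construct it by a lexicographic comparison.
\begin{itemize}
\item[(a)] Let $x=\min(G\triangle G')$; then $x\in G'\setminus G$.
\item[(b)] Since $F$ is disjoint from $G'$, we have $x\notin F$.
\item[(c)] Replace $F$ by a set $F'$ that agrees with $[n]\setminus G$ in the region determining lex order, so that $F'\leq_{\rm lex}F$ and $F'\cap G=\emptyset$.
\end{itemize}
Because $\mathcal L_A$ is an initial segment, $F'\leq_{\rm lex}F$ gives $F'\in\mathcal L_A$, contradicting $G\in\overline N_\ell(\mathcal L_A)$.

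The cleanest way to carry out step~(c) is the standard reformulation: a lex initial segment of $k$-sets is the set of all $F$ with $F\leq_{\rm lex}F_0$, where $F_0$ is its last member. Then $G$ meets every such $F$ if and only if the lex-smallest $k$-subset of $[n]\setminus G$ is $>_{\rm lex}F_0$. The map sending $G$ to the lex-smallest $k$-subset of its complement is order-reversing in the appropriate sense, which gives the claim. I expect this verification to be the main obstacle. It is fiddly but elementary, and I would check it by comparing the first coordinates where $G$ and $G'$ differ.

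Next, since $\mathcal A$ and $\mathcal B$ are cross-intersecting, $\mathcal B\subseteq\overline N_\ell(\mathcal A)$. Hence
\[
b\leq\binom n\ell-|N_\ell(\mathcal A)|.
\]
By Theorem~\ref{cokk}, $|N_\ell(\mathcal A)|\geq|N_\ell(\mathcal L_A)|$. Combining these gives
\[
b\leq|\overline N_\ell(\mathcal L_A)|.
\]

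Finally, by the structural fact, $\overline N_\ell(\mathcal L_A)$ is the initial lex segment of $\binom{[n]}\ell$ of length $|\overline N_\ell(\mathcal L_A)|\geq b$. It therefore contains the first $b$ sets in lex order, that is, $\mathcal L_B\subseteq\overline N_\ell(\mathcal L_A)$. This says exactly that $\mathcal L_A$ and $\mathcal L_B$ are cross-intersecting.
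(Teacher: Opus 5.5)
The paper gives no proof of this statement; it quotes it from \cite{Hil1976}. Your argument is correct. It is the standard derivation of Hilton's lemma from Kruskal--Katona: Theorem~\ref{cokk} together with $\mathcal B\subseteq\overline N_\ell(\mathcal A)$ gives $|\mathcal B|\le|\overline N_\ell(\mathcal L([n],k,|\mathcal A|))|$. The companion of an initial lexicographic segment is again an initial lexicographic segment, so it contains $\mathcal L([n],\ell,|\mathcal B|)$.

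Your bulleted steps (a)--(c) are not an argument on their own. The reformulation you give, however, settles the point you flagged as the main obstacle in two lines:
\begin{itemize}
\item Let $\mu(G)$ be the set of the $k$ smallest elements of $[n]\setminus G$. This is the lex-least $k$-subset of $[n]\setminus G$, and it exists because $n\ge k+\ell$.
\item If $G'<_{\rm lex}G$ and $x=\min(G\triangle G')$, then $x\in G'\setminus G$ and the two complements agree on $[x-1]$.
\item Hence either $\mu(G)=\mu(G')$, or $\mu(G)$ and $\mu(G')$ agree on $[x-1]$ with $x\in\mu(G)\setminus\mu(G')$. In both cases $\mu(G)\le_{\rm lex}\mu(G')$.
\end{itemize}
Thus $\mu(G)>_{\rm lex}F_0$ forces $\mu(G')>_{\rm lex}F_0$. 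This also makes your step (c) precise: take $F'=\mu(G)\le_{\rm lex}\mu(G')\le_{\rm lex}F$.

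Within the paper's own framework, the structural fact needs no separate check. The complementation and relabeling $i\mapsto n+1-i$ in the proof of Theorem~\ref{cokk} send $N_\ell(\mathcal L([n],k,a))$ to an iterated shadow of an initial colex segment, which is again initial colex. The same relabeling turns colex into reverse lex, so the neighborhood is a final lexicographic segment. This is exactly the nesting fact the paper uses in the proof of Theorem~\ref{sec123:cck}.

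Your direct version is self-contained and does not need the closure of colex segments under shadows. The complementation version is shorter once Kruskal--Katona is available.
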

Lexicographic replacement need not preserve nontriviality. Thus
Hilton's lemma alone does not justify its use within the class of
nontrivial pairs.

\subsection{The restricted universe and the layer decomposition}
Throughout the remainder of this section, assume $n>k+\ell$ and
$k,\ell\geq2$.
Set
\[
\mathcal B(n,k,\ell)=
\{A\in\tbinom{[n]}k:|A\cap[\ell+1]|\geq2\},\qquad
S=\binom{n-1}{k-1}-\binom{n-\ell-1}{k-1}.
\]
The restricted universe is shifted, and
\[
|\mathcal B(n,k,\ell)|
=\sum_{j=2}^{\min\{k,\ell+1\}}
\binom{\ell+1}j\binom{n-\ell-1}{k-j}.
\]
For $0\leq m\leq|\mathcal B(n,k,\ell)|$, let
$\mathcal L_\ell([n],k,m)$ denote its first $m$ members in
lexicographic order.
\begin{lemma}\label{lem:restricted-universe}
Let $n>k+\ell$ and $k,\ell\geq2$.
\begin{enumerate}
\item[\rm(i)] If $\mathcal A\subseteq\binom{[n]}k$ is shifted and
$\overline N_\ell(\mathcal A)$ is nonempty and nontrivial, then
$\mathcal A\subseteq\mathcal B(n,k,\ell)$.
\item[\rm(ii)] If $\mathcal A\subseteq\mathcal B(n,k,\ell)$ and
$|\mathcal A|>S$, then both $\mathcal A$ and
$\overline N_\ell(\mathcal A)$ are nontrivial, and the latter is
nonempty.
\end{enumerate}
\end{lemma}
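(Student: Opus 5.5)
Both parts reduce to elementary exchange arguments.

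\emph{Part (i).} We argue by contradiction. Suppose $\mathcal A$ is shifted and some $A\in\mathcal A$ has $|A\cap[\ell+1]|\leq1$. Since $\mathcal A$ is shifted and $n>k+\ell$, the set $A$ can be shifted into a set $A'\in\mathcal A$ with $|A'\cap[\ell+1]|\leq1$, whose elements outside $[\ell+1]$ are as small as possible. There are two cases.
\begin{enumerate}
\item[$\bullet$] If $A\cap[\ell+1]=\emptyset$, then by shiftedness $[\ell+2,\ell+k+1]\in\mathcal A$; the interval fits because $k+\ell+1\leq n$. Repeatedly applying the shifts $s_{ij}$ with $i\leq\ell+1<j$ would suggest that $[2,k+1]\in\mathcal A$, but what we actually need is easier. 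Every $B\in\overline N_\ell(\mathcal A)$ meets every member of $\mathcal A$. Hence $B\cap[\ell+2,\ell+k+1]\neq\emptyset$.
\item[$\bullet$] If $A\cap[\ell+1]=\{x\}$, shiftedness gives $\{1\}\cup[\ell+2,\ell+k]\in\mathcal A$ (using $i<j$ and $1\leq x$).
\end{enumerate}
The cleaner route is to observe that, because $\mathcal A$ is shifted, $\overline N_\ell(\mathcal A)$ is also shifted. A nonempty shifted family $\mathcal B$ is nontrivial if and only if $[2,\ell+1]\in\mathcal B$. Indeed, if $\bigcap\mathcal B=\emptyset$, some $B$ avoids $1$, and shifting down gives $[2,\ell+1]\in\mathcal B$. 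So $[2,\ell+1]\in\overline N_\ell(\mathcal A)$, and shiftedness then puts every $\ell$-subset of $[\ell+1]$ into $\overline N_\ell(\mathcal A)$. Consequently every $A\in\mathcal A$ meets every $\ell$-subset of $[\ell+1]$, which forces $|A\cap[\ell+1]|\geq2$. Thus $\mathcal A\subseteq\mathcal B(n,k,\ell)$. In this route the only step needing care is verifying that $\overline N_\ell$ of a shifted family is shifted. This is standard: if $B$ meets all of $\mathcal A$, then $s_{ij}(B)$ does too, since any $A$ disjoint from $s_{ij}(B)$ yields $s_{ij}(A)$-type members disjoint from $B$ by the shiftedness of $\mathcal A$.

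\emph{Part (ii).} Since every member of $\mathcal B(n,k,\ell)$ meets every $\ell$-subset of $[\ell+1]$, all $\ell+1$ of these sets lie in $\overline N_\ell(\mathcal A)$. So this companion is nonempty, and it is nontrivial because the intersection of all $\ell$-subsets of $[\ell+1]$ is empty. For the nontriviality of $\mathcal A$, note that for each $x\in[n]$ the members of $\mathcal B(n,k,\ell)$ containing $x$ number at most $S$. For $x\in[\ell+1]$ these are the $k$-sets containing $x$ and meeting $[\ell+1]\setminus\{x\}$, counted by $\binom{n-1}{k-1}-\binom{n-\ell-1}{k-1}=S$. For $x>\ell+1$ the count is $\binom{n-1}{k-1}-\binom{n-\ell-2}{k-1}-\ell\binom{n-\ell-2}{k-2}$; comparing this with $S$ via Pascal's rule gives a value $\leq S$. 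Hence $|\mathcal A|>S$ forces $\mathcal A$ to avoid being contained in any star, so $\bigcap\mathcal A=\emptyset$.

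The main obstacle is the count for $x>\ell+1$. I expect it to reduce to $\binom{n-\ell-2}{k-2}\geq0$-type inequalities: the members containing $x$ are in bijection with $(k-1)$-subsets of $[n]\setminus\{x\}$ meeting $[\ell+1]$ in at least $2$ points. That count is at most the number of $(k-1)$-subsets meeting the $\ell$-set $[\ell+1]\setminus\{1\}$ after a suitable injection, which is at most $S$.
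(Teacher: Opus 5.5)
Your argument is correct and is essentially the paper's. In (i) you show $\overline N_\ell(\mathcal A)$ is shifted, so it contains $[2,\ell+1]$ and hence all of $\binom{[\ell+1]}{\ell}$. In (ii) you bound every point degree in $\mathcal B(n,k,\ell)$ by $S$; the paper gets this bound from the injection replacing $x$ by $1$, using that the restricted universe is shifted, whereas you count directly.

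In that count the one-point term should be $(\ell+1)\binom{n-\ell-2}{k-2}$, not $\ell\binom{n-\ell-2}{k-2}$, which gives exactly $S-\ell\binom{n-\ell-2}{k-2}$. Your closing remark already suffices without any injection: every such $(k-1)$-set meets the $\ell$-set $[2,\ell+1]$, and there are exactly $S$ of those. The abandoned case analysis at the start of (i) should simply be deleted.
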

\begin{proof}
The companion of a shifted family is shifted. To see this, let
$B\in\overline N_\ell(\mathcal A)$, $j\in B$, $i\notin B$ and
$i<j$, and put $B'=(B\setminus\{j\})\cup\{i\}$.
If $A\in\mathcal A$ were disjoint from $B'$, then
$A\cap B=\{j\}$ and $i\notin A$. Shiftedness would give
$(A\setminus\{j\})\cup\{i\}\in\mathcal A$, disjoint from $B$,
a contradiction.

Any shifted nontrivial $\ell$-uniform family contains a member
avoiding $1$, hence contains $[2,\ell+1]$ and all of its left
shifts. It therefore contains $\binom{[\ell+1]}\ell$.
Every member of $\mathcal A$ must meet all these sets, which is
equivalent to having at least two elements in $[\ell+1]$.
This proves (i).

For (ii), the family $\binom{[\ell+1]}\ell$ is contained in
$\overline N_\ell(\mathcal A)$, so the latter is nonempty and
nontrivial. The degree of $1$ in $\mathcal B(n,k,\ell)$ is $S$.
Since the restricted universe is shifted, its degree at any other
point is at most $S$: replacing that point by $1$ gives an injection
from the members containing it but not $1$ into the members
containing $1$ but not that point. A trivial subfamily consequently
has size at most $S$, proving that $\mathcal A$ is nontrivial.
\end{proof}

For a nonempty shifted family
$\mathcal A\subseteq\mathcal B(n,k,\ell)$,
put
\[
\mathcal A_i=\{A\in\mathcal A:\min A=i\},\qquad
\mathcal H_i=\{A\setminus\{i\}:A\in\mathcal A_i\}.
\]
Write $r=\max\{i:\mathcal A_i\neq\emptyset\}$. Then $r\leq\ell$ and
$\mathcal H_i\subseteq\binom{[i+1,n]}{k-1}$.
\begin{lemma}[Disjoint layer decomposition]\label{lem:layers}
With this notation,
\[
N_\ell(\mathcal A)=
\mathop{\dot\bigcup}_{i=1}^{r}
\{[i-1]\cup E:E\in
 N_{\ell-i+1}^{[i+1,n]}(\mathcal H_i)\}.
\]
In particular, the cardinality of $N_\ell(\mathcal A)$ is the sum
of the cardinalities of these $r$ layers.
\end{lemma}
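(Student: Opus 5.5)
The plan is to classify each $G\in N_\ell(\mathcal A)$ by the smallest element of $[n]$ it misses, and to show that this index is the same as the layer in which $G$ appears.

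\emph{Disjointness of the layers.} The $i$-th layer consists of $\ell$-sets of the form $[i-1]\cup E$ with $E\subseteq[i+1,n]$. Such a set contains $[i-1]$ and misses $i$, so $i=\min([n]\setminus G)$. Hence $G$ determines $i$, and different values of $i$ give disjoint layers. Each layer is in bijection with $N_{\ell-i+1}^{[i+1,n]}(\mathcal H_i)$ via $E\mapsto[i-1]\cup E$. Once the set identity is proved, the cardinality statement follows.

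\emph{Inclusion ``$\supseteq$''.} Let $E\in N^{[i+1,n]}_{\ell-i+1}(\mathcal H_i)$, so $E$ is disjoint from some $H=A\setminus\{i\}$ with $A\in\mathcal A_i$. Since $\min A=i$, the set $A$ misses $[i-1]$. It also misses $E$, because $E\cap H=\emptyset$ and $i\notin E$. Therefore $G=[i-1]\cup E$ is disjoint from $A$, and $|G|=(i-1)+(\ell-i+1)=\ell$. So $G\in N_\ell(\mathcal A)$.

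\emph{Inclusion ``$\subseteq$''.} This is where shiftedness is used, and it is the main obstacle. Take $G\in N_\ell(\mathcal A)$ and let $A\in\mathcal A$ be disjoint from $G$. Put $i=\min([n]\setminus G)$, so $[i-1]\subseteq G$. Since $A$ misses $G\supseteq[i-1]$, we have $\min A\geq i$. We need a member with minimum exactly $i$.
\begin{itemize}
\item If $i\in A$, take $A'=A$.
\item If $i\notin A$, let $j=\min A>i$ and put $A'=(A\setminus\{j\})\cup\{i\}$. Shiftedness gives $A'\in\mathcal A$. Because $i\notin G$ and $A\cap G=\emptyset$, the set $A'$ is still disjoint from $G$, and $\min A'=i$.
\end{itemize}
In both cases $A'\in\mathcal A_i$, so $\mathcal A_i\neq\emptyset$ and in particular $i\leq r$. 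Write $E=G\setminus[i-1]\subseteq[i+1,n]$; then $|E|=\ell-i+1$ and $E\cap(A'\setminus\{i\})=\emptyset$. Hence $E\in N^{[i+1,n]}_{\ell-i+1}(\mathcal H_i)$, and $G=[i-1]\cup E$ lies in the $i$-th layer.

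The hypothesis $\mathcal A\subseteq\mathcal B(n,k,\ell)$ is needed only for the preliminary observation $r\leq\ell$, which makes every $\ell-i+1$ positive. Indeed, each member has at least two elements in $[\ell+1]$, so its minimum is at most $\ell$.
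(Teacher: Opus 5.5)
Your proposal is correct and follows essentially the same argument as the paper. The paper also indexes $G$ by its least missing point, taken inside $[r]$; this coincides with your $\min([n]\setminus G)$, because $\min A\le r$ is missed by $G$. It then uses a single left shift $j\to i$ to obtain a disjoint member of $\mathcal A_i$, and derives disjointness of the layers from $[i-1]\subseteq G$ and $i\notin G$.
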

\begin{proof}
If $B\in N_\ell(\mathcal A)$, choose $A\in\mathcal A$ disjoint
from $B$, and let $i$ be the least element of $[r]\setminus B$.
Such an element exists because $\min A\in[r]\setminus B$.
Writing $j=\min A$, we have $i\leq j$. If $i<j$, replace $j$ by
$i$ in $A$; shiftedness ensures that the resulting member still
belongs to $\mathcal A$, and it is still disjoint from $B$.
Thus $B$ is in the displayed $i$-th layer. The reverse inclusion
is immediate. Every member of layer $i$ contains $[i-1]$ and
avoids $i$, so distinct layers are disjoint.
\end{proof}
For $N\geq a+b$ and $0\leq m\leq\binom Na$, write
$\Psi(N,a,b;m)=|N_b^{[N]}(\mathcal L([N],a,m))|$.
Theorem~\ref{cokk} evaluates this quantity by the $(N-a)$-binomial
expansion of $m$, with $\Psi(N,a,b;0)=0$. Lemma~\ref{lem:layers}
therefore yields the lower bound
\begin{equation}\label{eq:layer-lower-bound}
|N_\ell(\mathcal A)|\geq
\sum_{i=1}^{r}\Psi(n-i,k-1,\ell-i+1;|\mathcal A_i|).
\end{equation}

Put
$c_i=\binom{n-i}{k-1}-\binom{n-\ell-1}{k-1}$ for $1\leq i\leq\ell$.
This is the capacity of minimum-element layer $i$ in the restricted
universe. The comparison family has an explicit neighborhood size.
\begin{proposition}\label{prop:restricted-lex-count}
Let $n>k+\ell$, $k,\ell\geq2$, and
$0<m\leq|\mathcal B(n,k,\ell)|$.
There are unique integers $r,t$ with $1\leq r\leq\ell$,
$1\leq t\leq c_r$, and
$m=\sum_{i<r}c_i+t$. Then
\begin{align}
|N_\ell(\mathcal L_\ell([n],k,m))|
 &=\sum_{i<r}\left(\binom{n-i}{\ell-i+1}-1\right)
    +\Psi(n-r,k-1,\ell-r+1;t),\label{eq:restricted-lex-count}\\
|\overline N_\ell(\mathcal L_\ell([n],k,m))|
 &=\binom{n-r+1}{\ell-r+1}+r-1
    -\Psi(n-r,k-1,\ell-r+1;t).\label{eq:restricted-companion-count}
\end{align}
\end{proposition}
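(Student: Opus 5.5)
The plan is to identify the structure of $\mathcal L_\ell([n],k,m)$ through the minimum-element layers and then read off the neighborhood layer by layer via Lemma~\ref{lem:layers}.

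\textbf{Layer structure.} First, the members of $\mathcal B(n,k,\ell)$ with minimum $i$ are the sets $\{i\}\cup H$ with $H\in\binom{[i+1,n]}{k-1}$ and $H\cap[i+1,\ell+1]\neq\emptyset$. The last condition is exactly $H\not\subseteq[\ell+2,n]$. This gives $c_i$ members for $i\le\ell$ and none for $i=\ell+1$, because $|A\cap[\ell+1]|\ge2$ fails there.

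Lexicographic order sorts by minimum first. Within a fixed minimum $i$, it orders the sets $H$ lexicographically on $[i+1,n]$. The excluded sets $H\subseteq[\ell+2,n]$ form a final lex segment of $\binom{[i+1,n]}{k-1}$. Hence the admissible $H$ are exactly the first $c_i$ members of $\mathcal L([i+1,n],k-1,\cdot)$.

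So $\mathcal L_\ell([n],k,m)$ consists of:
\begin{itemize}
\item[\textbullet] full layers $i<r$, with $\mathcal H_i=\mathcal L([i+1,n],k-1,c_i)$;
\item[\textbullet] layer $r$, with $\mathcal H_r=\mathcal L([r+1,n],k-1,t)$;
\item[\textbullet] empty layers beyond $r$.
\end{itemize}
This decomposition also gives the uniqueness of $r$ and $t$. The family is shifted, as an initial lex segment of a shifted universe, so Lemma~\ref{lem:layers} applies. It shows that $|N_\ell|$ is the sum over $i\le r$ of $|N^{[i+1,n]}_{\ell-i+1}(\mathcal H_i)|$. Layer $r$ contributes $\Psi(n-r,k-1,\ell-r+1;t)$ by definition, after relabeling $[r+1,n]$ as $[n-r]$.

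\textbf{Full layers.} For a full layer $i<r$, I claim that every $(\ell-i+1)$-subset $E$ of $[i+1,n]$ except $[i+1,\ell+1]$ is disjoint from some admissible $H$. Note that $|[i+1,\ell+1]|=\ell-i+1$, so this set is indeed one of the $E$. Every admissible $H$ meets it, so $[i+1,\ell+1]$ is excluded.

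Conversely, suppose $E\ne[i+1,\ell+1]$. Then some $y\in[i+1,\ell+1]\setminus E$ exists. The complement of $E\cup\{y\}$ in $[i+1,n]$ has size $n-\ell-1\ge k-1$, using $n>k+\ell$. So I can choose $H\ni y$ with $H\setminus\{y\}$ inside this complement. Such an $H$ is admissible and disjoint from $E$. This yields $\binom{n-i}{\ell-i+1}-1$ for each full layer, proving \eqref{eq:restricted-lex-count}.

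\textbf{Companion count.} For \eqref{eq:restricted-companion-count}, subtract from $\binom n\ell$. The expected obstacle is the telescoping identity
\[
\binom n\ell-\sum_{i<r}\binom{n-i}{\ell-i+1}=\binom{n-r+1}{\ell-r+1}.
\]
I would prove it by induction on $r$ via Pascal's rule:
\[
\binom{n-i+1}{\ell-i+1}-\binom{n-i}{\ell-i+1}=\binom{n-i}{\ell-i}.
\]
The base case $r=1$ is trivial. The $-1$ terms contribute $+(r-1)$, which completes the count.
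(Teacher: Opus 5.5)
Your proposal is correct and follows the paper's proof essentially step for step. The first $r-1$ minimum-element layers are full, and the $r$-th is an ordinary lexicographic segment of size $t$, because the excluded residual sets form a final segment. Each full layer's neighborhood is everything on its level except $[i+1,\ell+1]$, and Lemma~\ref{lem:layers} together with Pascal telescoping yields both formulas.

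One harmless slip: the complement of $E\cup\{y\}$ in $[i+1,n]$ has $n-\ell-2$ elements, not $n-\ell-1$. (The paper uses the complement of $E$ itself, which does have $n-\ell-1$ elements.) Since $n>k+\ell$ still gives $n-\ell-2\geq k-1\geq k-2$, the required $H$ exists and the argument goes through.
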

\begin{proof}
The restricted lexicographic segment is shifted, since a left shift
stays in the restricted universe and precedes the original set in
lexicographic order. Its first $r-1$ layers are full, its $r$-th
layer has size $t$, and all later layers are empty.

In a full layer $i$, the residual family consists of all
$(k-1)$-sets of $[i+1,n]$ meeting $[i+1,\ell+1]$.
Its disjointness neighborhood on level $\ell-i+1$ consists of
all sets on that level except $[i+1,\ell+1]$. Indeed, every
other such set leaves a point of $[i+1,\ell+1]$ available, and
its complement in $[i+1,n]$ has size $n-\ell-1\geq k-1$.
It therefore avoids some residual member.

All residual sets meeting $[r+1,\ell+1]$ precede those avoiding
that interval. The partial residual layer is consequently an
ordinary lexicographic segment of size $t$, with neighborhood
size $\Psi(n-r,k-1,\ell-r+1;t)$.
Lemma~\ref{lem:layers} now proves
\eqref{eq:restricted-lex-count}. Subtracting from $\binom n\ell$
and using Pascal's identity gives
\eqref{eq:restricted-companion-count}.
\end{proof}

\subsection{Restricted-lex comparison}
\begin{theorem}[Restricted-lex comparison]\label{sec123:cck}
Let $n>k+\ell$, $k,\ell\geq2$, and
$\mathcal A\subseteq\mathcal B(n,k,\ell)$. Then
\[
|N_\ell(\mathcal A)|\geq
|N_\ell(\mathcal L_\ell([n],k,|\mathcal A|))|.
\]
In particular, this holds when $\mathcal A\subseteq\binom{[n]}k$
is shifted and $\overline N_\ell(\mathcal A)$ is nonempty and
nontrivial. For $k,\ell\geq3$, equality at $|\mathcal A|=S+1$
is characterized in Proposition~\ref{prop:threshold-equality} below.
\end{theorem}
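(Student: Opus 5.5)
First reduce to shifted $\mathcal A$. The universe $\mathcal B(n,k,\ell)$ is shifted, so every $s_{ij}$ maps a subfamily of it into itself. Shifting does not increase $|N_\ell(\mathcal A)|$. To see this, shift $\mathcal A$ together with the $\ell$-uniform companion $\overline N_\ell(\mathcal A)$: simultaneous shifting preserves cross-intersection, so $s_{ij}(\overline N_\ell(\mathcal A))\subseteq\overline N_\ell(s_{ij}(\mathcal A))$. Since shifting preserves cardinality, $|N_\ell(s_{ij}\mathcal A)|\le|N_\ell(\mathcal A)|$. We may therefore assume $\mathcal A$ is shifted and nonempty. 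The ``in particular'' clause then follows from Lemma~\ref{lem:restricted-universe}(i).

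For shifted $\mathcal A$, write $a_i=|\mathcal A_i|$ with layers $i\le r\le\ell$. By \eqref{eq:layer-lower-bound} it suffices to show
\[
\sum_{i=1}^{r}\Psi(n-i,k-1,\ell-i+1;a_i)\ \ge\ \sum_{i<r'}\Bigl(\tbinom{n-i}{\ell-i+1}-1\Bigr)+\Psi(n-r',k-1,\ell-r'+1;t),
\]
where $m=\sum a_i=\sum_{i<r'}c_i+t$ as in Proposition~\ref{prop:restricted-lex-count}. Each $\mathcal H_i$ lies in the capacity-$c_i$ residual universe, so $a_i\le c_i$.

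This is an optimization over vectors $(a_1,\dots,a_r)$ with $0\le a_i\le c_i$. Shiftedness also imposes monotonicity: $\mathcal H_{i+1}\subseteq\mathcal H_i(\overline{i+1})$-type inclusions give $a_{i}\ge$ something in terms of $a_{i+1}$. The plan is an exchange argument. Take the last nonempty layer $r$ and move mass from layer $r$ into the first nonfull layer $j<r$, one set at a time, showing the right-hand sum does not increase. This reduces to comparing marginal costs. Adding one more set to a lex segment in layer $j$ costs $\Psi(N_j;a_j+1)-\Psi(N_j;a_j)$, the number of new $(\ell-j+1)$-sets disjoint from the next lex set. Removing the last set of layer $r$ saves the corresponding marginal $\Psi$-increment there. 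Pascal's rule $\binom{n-j}{\cdot}=\binom{n-j-1}{\cdot}+\binom{n-j-1}{\cdot-1}$ lets us embed layer $r$'s residual configuration into layer $j$'s: layer $j$ with ground set $[j+1,n]$ and level $\ell-j+1$ decomposes by whether $j+1,\dots,r$ are used. The marginal cost at layer $j$ can then be compared termwise with that at layer $r$, using the binomial expansions of Theorem~\ref{cokk}.

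The argument proceeds by induction on $r$, the last nonempty layer. For $r=1$ the statement is exactly Theorem~\ref{cokk} applied on $[2,n]$, noting that the lex segment inside $\mathcal B(n,k,\ell)$ restricted to layer 1 is an ordinary lex segment by the proof of Proposition~\ref{prop:restricted-lex-count}. For the inductive step, collapse layers $j$ and $r$ via the exchange above until layer $r$ empties or layer $j$ fills, and repeat.

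The main obstacle is the marginal-cost comparison. We need that a lex-segment increment at a shallower layer (larger ground set, higher level) never costs more than the corresponding decrement at layer $r$ saves. This is not a pointwise fact about $\Psi$, since $\Psi$ is far from concave. I would handle it by proving the stronger block statement directly: $\Psi(n-j;a_j)+\Psi(n-r;a_r)\ge\Psi(n-j;a_j+s)+\Psi(n-r;a_r-s)$ whenever $a_j+s\le c_j$. Write both lex segments through their binomial expansions, viewed as column vectors of an upper triangular matrix indexed by the binomial coefficients. Then perform column exchanges, refining each column by Pascal's identity until the two sides match term by term, and check that each elementary exchange is monotone.
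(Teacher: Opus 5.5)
There is a genuine gap, and it lies in the step you yourself flag as the main obstacle.

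\textbf{The layer formulation needs constraints you do not supply.} The reduction to shifted $\mathcal A$ and to the layer bound \eqref{eq:layer-lower-bound} is fine. The numerical problem it leaves is false with only the capacity constraints $a_i\le c_i$: one set in layer $2$ costs $\Psi(n-2,k-1,\ell-1;1)=\binom{n-k-1}{\ell-1}$, which is less than the cost $\binom{n-k}{\ell}$ of one set in layer $1$. So everything hinges on the shiftedness constraints linking $a_j$ and $a_r$. You never state these constraints, never use them, and never check that your transfers preserve them, which your induction on $r$ requires.

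\textbf{The block inequality you propose is false.} It fails even for size vectors of genuine shifted families. Take $n=8$, $k=\ell=3$, and the shifted family $\{123,124,125,126,134,135,234,235\}\subseteq\mathcal B(8,3,3)$. Here $\mathcal H_1=\{23,24,25,26,34,35\}$ and $\mathcal H_2=\{34,35\}$, so $a_1=6<c_1=15$ and $a_2=2$. One computes
\[
\Psi(7,2,3;6)=20,\quad \Psi(7,2,3;7)=26,\quad \Psi(6,2,2;2)=9,\quad \Psi(6,2,2;1)=6.
\]
Moving one set from layer $2$ to layer $1$ therefore raises the sum from $29$ to $32$. Only moving both returns it to $29=\Psi(7,2,3;8)$. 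Hence neither one-at-a-time transfers nor arbitrary $s$ with $a_j+s\le c_j$ work. The admissible transfer amount must be read off from the binomial expansions. Any valid comparison of the two different functions $\Psi(n-j,k-1,\ell-j+1;\cdot)$ and $\Psi(n-r,k-1,\ell-r+1;\cdot)$ would also need a relation between $a_j$ and $a_r$ that you have not formulated.

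\textbf{What the paper does instead.} The paper never compares different functions. It decomposes by the whole prefix $[r]$ via \eqref{eq:fixed-prefix-decomposition}. It then compresses every section $\mathcal H_P$, $P\subseteq[r]$, to a lexicographic segment. The compressed neighborhoods are nested final segments, so no block grows. Alternating with shifts, this reaches a shifted family with lexicographic sections and
\[
c_r\ge|\mathcal F_{\{1\}}|\ge\cdots\ge|\mathcal F_{\{r\}}|=q.
\]
The transfer is from $\mathcal F_{\{r\}}$ to the first non-full $\mathcal F_{\{j\}}$. Both sections live on $[r+1,n]$ and enter only blocks of residual level $\ell-r+1$, so a single function $f_r$ is involved. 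The matrix argument, using tail exchange and Pascal refinement together with $p\ge q$, then gives $f_r(p')+f_r(q')\le f_r(p)+f_r(q)$ with $p<p'\le h$.

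The domination \eqref{eq:preserved-pair-comparison} by the unchanged pair sections shows that no other block of the neighborhood changes. So the numerical decrease is the actual decrease of $|N_\ell|$. The induction on $(r,q)$ is then applied to an actual family $\mathcal F'$ rather than to a size vector, so no feasibility region ever has to be described.

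Your last sentence gestures at the right tool, but this section-level structure is the missing idea. Passing to the layer sum $\sum_i\Psi(n-i,k-1,\ell-i+1;a_i)$ discards exactly the information the argument needs.
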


\begin{proof}
The empty family is immediate. Fix $m=|\mathcal A|>0$ and
induct lexicographically on
\[
(r,q),\qquad r=\max\{\min A:A\in\mathcal A\},\qquad
q=|\{A\in\mathcal A:\min A=r\}|.
\]
Here $1\leq r\leq\ell$ and $q>0$. For any family
$\mathcal H\subseteq\mathcal B(n,k,\ell)$ whose members meet
$[r]$, and $P,Q\subseteq[r]$, write
\[
\mathcal H_P=\{H\setminus P:H\in\mathcal H,\ H\cap[r]=P\},
\qquad
\mathcal U_Q(\mathcal H)=
\bigcup_{P\subseteq[r]\setminus Q}
N_{\ell-|Q|}^{[r+1,n]}(\mathcal H_P).
\]
Thus $\mathcal H_\emptyset=\emptyset$, all sections have the
common ground set $[r+1,n]$, and
\begin{equation}\label{eq:fixed-prefix-decomposition}
N_\ell(\mathcal H)=
\mathop{\dot\bigcup}_{Q\subseteq[r]}
\{Q\cup E:E\in\mathcal U_Q(\mathcal H)\}.
\end{equation}
Indeed, $Q$ records the intersection of a neighborhood member
with $[r]$. This identity does not require shiftedness.

Replace each nonempty section $\mathcal H_P$ by the ordinary
lexicographic segment of the same size, and denote the resulting
family by $C_r(\mathcal H)$. This stays in
$\mathcal B(n,k,\ell)$: sections with $|P|\geq2$ are
unrestricted, while the allowed residual members of a singleton
section are precisely the initial lexicographic segment meeting
$[r+1,\ell+1]$. By Theorem~\ref{cokk}, each residual neighborhood
does not increase in size. After compression, these neighborhoods
are final lexicographic segments on their common target level,
and hence are nested. To see this, complement and reverse the
labels: an initial lexicographic segment becomes initial colex,
whose shadow is again initial colex. Thus, for $b=\ell-|Q|$,
\[
|\mathcal U_Q(C_r(\mathcal H))|
\leq\max_{P\subseteq[r]\setminus Q}
|N_b^{[r+1,n]}(\mathcal H_P)|
\leq|\mathcal U_Q(\mathcal H)|.
\]
A section of degree $d$ with $d+b>n-r$ has empty neighborhood,
and a nonempty section of degree zero has full neighborhood;
these cases obey the same comparison. Summing
\eqref{eq:fixed-prefix-decomposition} gives
$|N_\ell(C_r(\mathcal H))|\leq|N_\ell(\mathcal H)|$.
If $r=1$, this compression produces
$\mathcal L_\ell([n],k,m)$, proving the base case.

Apply elementary shifts and $C_r$ whenever they change the
family. A shift preserves the restricted universe and does not
increase the neighborhood: shift the family and its full
cross-intersecting companion simultaneously, preserving
cross-intersection and the companion's size. A left shift cannot
increase $r$, or $q$ when $r$ stays fixed, while $C_r$ preserves
$(r,q)$. If this parameter decreases, induction proves the
result immediately. Otherwise each operation strictly decreases
the sum of the global lexicographic ranks of the members.
We therefore reach a shifted family $\mathcal F$ whose sections
are all lexicographic, with the same size and parameter and
$|N_\ell(\mathcal F)|\leq|N_\ell(\mathcal A)|$.

Shifting between prefix points gives
\[
c_r\geq|\mathcal F_{\{1\}}|\geq\cdots
\geq|\mathcal F_{\{r\}}|=q>0.
\]
If every singleton section before $r$ has size $c_r$, each
contains the full residual star at $r+1$, whose first and
further shadows are full. For any $P\subseteq[r]$ with
$2\leq|P|\leq k$, choose $j\in P\setminus\{r\}$.
Successive shifts of residual points to $P\setminus\{j\}$ give
$\partial^{|P|-1}\mathcal F_{\{j\}}\subseteq\mathcal F_P$,
so every such section is full. Consequently all minimum-element
layers before $r$ are full and the last is lexicographic.
Thus $\mathcal F=\mathcal L_\ell([n],k,m)$, as required.

Otherwise choose the least $j<r$ such that
$p=|\mathcal F_{\{j\}}|<c_r$. Put
$z=\binom{n-r-1}{k-2}$, and set $h=z$ if $p<z$, and
$h=c_r$ otherwise. Then $0<q\leq p<h\leq c_r$.
We now transfer members from section $\{r\}$ to section
$\{j\}$ by arranging the binomial expansions in a matrix.
Put $f_i(m)=\Psi(n-i,k-1,\ell-i+1;m)$ for $1\leq i\leq\ell$.
For each nonempty minimum-element layer $\mathcal F_i$, write
\begin{equation}\label{eq:cck-matrix-row}
|\mathcal F_i|
=\sum_{j=0}^{s_i}\binom{n-a_{i,j}}{k-a_{i,j}+i+j-1},
\qquad
i\leq a_{i,0}<\cdots<a_{i,s_i},\quad i+s_i\leq n-k.
\end{equation}
Every displayed summand is positive. Place $a_{i,j}$ in column
$i+j$ of row $i$. The layer sequences form the upper triangular matrix
\[
M=
\begin{pmatrix}
a_{1,0}&a_{1,1}&a_{1,2}&a_{1,3}&\cdots\\
0&a_{2,0}&a_{2,1}&a_{2,2}&\cdots\\
0&0&a_{3,0}&a_{3,1}&\cdots\\
0&0&0&a_{4,0}&\cdots\\
\vdots&\vdots&\vdots&\vdots&\ddots
\end{pmatrix}.
\]
All missing entries are $0$; append zero rows to obtain an
$(n-k)\times(n-k)$ matrix. A zero is an empty-entry marker
and contributes zero to both sums. A nonzero entry $a$ in column
$u$ contributes
\begin{equation}\label{eq:cck-column-contribution}
V_u(a)=
\left(\binom{n-a}{k-a+u-1},\binom{n-a}{\ell-u+1}\right).
\end{equation}
By Theorem~\ref{cokk}, the two sums in row $i$ are
$|\mathcal F_i|$ and $f_i(|\mathcal F_i|)$.
They depend on the entries and their columns, not on their rows.
When sorting a column, place its nonzero entries in nondecreasing
order, followed by zeros.

Exchanging entries within a column preserves both total
contributions. For a positive canonical term in column $u\leq n-k$,
the first contribution is $\binom{n-a}{n-k-u+1}$, which strictly
decreases with $a$. Hence placing a smaller positive entry above
a larger one, or moving a positive term into an empty upper cell,
increases the upper size. For $a<n$, Pascal's identity in both
coordinates gives
\begin{equation}\label{eq:cck-matrix-pascal}
V_u(a)=V_u(a+1)+V_{u+1}(a+1),
\end{equation}
and, for $1\leq d\leq n-a$,
\begin{equation}\label{eq:cck-matrix-refinement}
V_u(a)=\sum_{v=1}^{d}V_{u+v-1}(a+v)+V_{u+d}(a+d).
\end{equation}
If refinement produces a zero-size term, its second contribution
is retained until deletion; append a zero row and column if needed.

Write the canonical rows for $p,q$ as
$(\alpha_0,\ldots,\alpha_s)$ and $(\beta_0,\ldots,\beta_t)$.
For these fixed-prefix sections both expansions start in column
$r$. Place them in rows $j$ and $r$, respectively, with zeros
before column $r$; this is again an upper triangular arrangement.
Only the following two transformation rules are needed.

Suppose first that $i>s$ or $\alpha_i>\beta_i$ for some
$i\leq t$, and take the least such $i$. Since $p\geq q$,
we have $i\geq1$. Exchange the two tails starting in column $r+i$,
moving the lower tail into the upper row and conversely.
An absent tail contributes zero.
Both joins are strict: $\alpha_{i-1}\leq\beta_{i-1}<\beta_i$,
and $\beta_{i-1}<\alpha_i$ if the exchanged upper tail exists.
Both total contributions are unchanged.
If the old upper tail is nonempty, its size is at most
$\binom{n-\alpha_i+1}{n-k-r-i+1}-1$, whereas the incoming
tail has size at least $\binom{n-\beta_i}{n-k-r-i+1}$.
Hence
\[
p'-p=q-q'\geq
\binom{n-\beta_i}{n-k-r-i+1}
-\binom{n-\alpha_i+1}{n-k-r-i+1}+1\geq1.
\]
The bound on the old tail is the defining greedy property
of its canonical binomial expansion. If that tail is empty,
the increase is exactly the positive size of the incoming tail.

Otherwise $s\geq t$ and $\alpha_i\leq\beta_i$ for $0\leq i\leq t$.
Choose the least $d\geq1$ such that $t+d>s$ or
$\beta_t+d<\alpha_{t+d}$. Then
$d\leq s-t+1\leq n-k-r-t+1\leq n-\beta_t$.
Apply \eqref{eq:cck-matrix-refinement} to the last lower
entry $\beta_t$. The entries $\beta_t+1,\ldots,\beta_t+d$
occupy columns $r+t,\ldots,r+t+d-1$, and the final entry
$\beta_t+d$ occupies column $r+t+d$.
Exchange this final entry with the upper tail starting in
that column. The upper row then ends at the inserted entry,
while the lower row retains the refined prefix and receives
the exchanged tail. Both operations preserve both totals
before normalization.
Minimality of $d$ gives
$\alpha_{t+d-1}<\beta_t+d$, and the other join is strict
by the choice of $d$. The same first-entry comparison as above
shows that the upper size strictly increases.

A zero-size term occurs only if $\beta_t=k+r+t-1$.
In that case $\alpha_{t+d},\ldots,\alpha_s$ is empty:
otherwise its first entry would exceed the maximum admissible
label in its column. The zero-size terms are therefore terminal.
Deleting them preserves the total size and cannot increase the
neighborhood sum. A terminal degree-zero term in column
$n-k+1$ has size one and neighborhood contribution zero;
Pascal carrying preserves both totals since $n>k+\ell$.
Thus the normalized rows satisfy $p'>p$, $q'<q$,
$p'+q'=p+q$, and
$f_r(p')+f_r(q')\leq f_r(p)+f_r(q)$.

We verify $p'\leq h$. The canonical row for $h$ is $(r+1)$
when $h=z$, and $(r+1,r+2,\ldots,\ell+1)$ when $h=c_r$.
Both old sizes are smaller than $h$.
Their first labels are therefore at least $r+1$.
Before normalization, every label in column $r+i$ is at least
$r+i+1$, and the exchange and refinement preserve these lower
bounds. A row exceeding $h$ must consequently contain its
entire consecutive prefix and a further term.
After a tail exchange, such a prefix would force one of the
original rows already to contain it: a minimal label at the
cut forces all preceding donor labels to be minimal as well.
After refinement, the upper row ends at the inserted term,
so exceeding $h$ would require the old upper row already to
contain the whole prefix. Neither is possible.
A degree-zero carry increases $p$ by exactly one and likewise
cannot cross the integer bound $h$.
We have therefore obtained
\begin{equation}\label{eq:fixed-prefix-transfer-sizes}
p<p'\leq h,\qquad 0\leq q'<q,\qquad p'+q'=p+q.
\end{equation}

Replace the two sections by lexicographic segments of sizes
$p',q'$ and leave all other sections unchanged, obtaining
$\mathcal F'\subseteq\mathcal B(n,k,\ell)$ with
$|\mathcal F'|=|\mathcal F|$. We next identify the numerical
change with the actual neighborhood change. For distinct
$a,u\in[r]$, shiftedness of the original family gives
\begin{equation}\label{eq:singleton-pair-shadow}
\partial\mathcal F_{\{a\}}\subseteq\mathcal F_{\{a,u\}},
\end{equation}
by replacing a deleted residual point by $u$.
The new donor section is contained in the old one, so the
unchanged pair section $\mathcal F_{\{r,u\}}$ contains a subset
of every old and new donor member.

For the receiving section, if $p\geq z$, the old section contains
the full residual star at $r+1$. Its shadow is
$\binom{[r+1,n]}{k-2}$, so every
$\mathcal F_{\{j,u\}}$, $u\ne j$, is full.
If $p<z$, then $p'\leq z$, so every new receiving member
contains $r+1$. An old member $H$ gives
$H\setminus\{r+1\}\in\mathcal F_{\{j,u\}}$.
This set avoids $r+1$, so the lexicographic pair section contains
every $(k-2)$-set containing $r+1$.
Here $k\geq3$, since $p>0$ and $z=1$ when $k=2$.
Every new receiving member thus contains a member of the pair
section. Together with \eqref{eq:singleton-pair-shadow}, this gives
\begin{equation}\label{eq:preserved-pair-comparison}
N_v^{[r+1,n]}(\mathcal F_{\{a\}})
 \cup N_v^{[r+1,n]}(\mathcal F'_{\{a\}})
 \subseteq N_v^{[r+1,n]}(\mathcal F_{\{a,u\}})
\end{equation}
for $a\in\{j,r\}$, $u\in[r]\setminus\{a\}$ and every
residual level $v$.

In \eqref{eq:fixed-prefix-decomposition}, a block with
$|[r]\setminus Q|\geq2$ is unchanged, since every eligible
changed singleton is dominated by an eligible unchanged pair
section, both before and after the transfer.
The other blocks are unchanged except those with
$[r]\setminus Q=\{j\}$ or $\{r\}$.
Their residual level is $\ell-r+1$, so
\begin{equation}\label{eq:fixed-prefix-transfer-cost}
|N_\ell(\mathcal F')|-|N_\ell(\mathcal F)|
=f_r(p')+f_r(q')-f_r(p)-f_r(q)\leq0.
\end{equation}
Only the original family was required to be shifted.

If $q'>0$, the new parameter is $(r,q')$; if $q'=0$, the
last nonempty layer has index less than $r$. In either case
induction applies to $\mathcal F'$ and gives
\[
|N_\ell(\mathcal A)|\geq|N_\ell(\mathcal F)|
\geq|N_\ell(\mathcal F')|
\geq|N_\ell(\mathcal L_\ell([n],k,m))|.
\]
Finally, the application to shifted families with nontrivial
companion follows from Lemma~\ref{lem:restricted-universe}(i).
\end{proof}

The proof also records exactly when a transfer preserves the
neighborhood size. A column exchange or Pascal refinement
preserves both numerical sums before normalization.
In the refinement case the neighborhood sum decreases strictly
if and only if
\[
\beta_t=k+r+t-1\qquad\text{and}\qquad r+t\leq\ell+1.
\]
Indeed, under the first condition the first deleted term has
second contribution $\binom{n-k-r-t}{\ell-r-t+1}$, which is
positive precisely under the second condition; if the second
condition fails, all deleted second contributions vanish.
The condition $\beta_t=k+r+t-1$ says that the last binomial
term of the donor row has size one. In particular $q=1$
gives a strict decrease, but the entire donor row need not
have size one for a strict decrease to occur.

For $k=\ell=3$, the matrix gives a useful equality-preserving transfer.
The two nonempty rows for the triangle family are $(2,3)$ and
$(3)$, starting in columns $1$ and $2$. Apply
$V_2(3)=V_2(4)+V_3(4)$ to the lower row and move its new
entry in column $3$ into the empty upper cell.
The rows become $(2,3,4)$ and $(4)$, representing
$\mathcal L_3([n],3,3n-8)$.
No zero-size term is deleted, so both sums are unchanged.
Thus $(2,3)\to(2,3,4)$ is an equality-preserving transfer.

\subsection{Equality at the first nontrivial threshold}

The next proposition concerns $k,\ell\geq3$, the range needed for
Theorem~\ref{mthm}, and describes the equality cases up to isomorphism.
\begin{proposition}[Equality at the threshold]\label{prop:threshold-equality}
Let $n>k+\ell$ and $k,\ell\geq3$, and put
\[
 S=\binom{n-1}{k-1}-\binom{n-\ell-1}{k-1},\qquad
 T=\binom{n-1}{\ell-1}-\binom{n-k-1}{\ell-1}.
\]
Suppose that $\mathcal A\subseteq\binom{[n]}k$ and
$\mathcal B=\overline N_\ell(\mathcal A)$ are nonempty and
nontrivial, and that $|\mathcal A|=S+1$. Then
\[
 |N_\ell(\mathcal A)|\geq\binom n\ell-T-1
 =|N_\ell(\mathcal L_\ell([n],k,S+1))|.
\]
Equality holds if and only if, up to a common relabeling of $[n]$,
the pair has one of the following forms:
\begin{enumerate}
\item[\rm(i)] There are $X\in\binom{[2,n]}k$ and
$Y\in\binom{[2,n]}\ell$ with
$X\cap Y\neq\emptyset$ such that
\begin{align*}
 \mathcal A&=\{A\in\tbinom{[n]}k:1\in A,\ A\cap Y\neq\emptyset\}
                   \cup\{X\},\\
 \mathcal B&=\{B\in\tbinom{[n]}\ell:1\in B,\ B\cap X\neq\emptyset\}
                   \cup\{Y\}.
\end{align*}
\item[\rm(ii)] $k=\ell=3$, and there are
$P,Q\in\binom{[2,n]}2$ with
$P\cap Q\neq\emptyset$ such that
\begin{align*}
 \mathcal A&=\{A\in\tbinom{[n]}3:1\in A,\ A\cap Q\neq\emptyset\}
       \cup\{A\in\tbinom{[n]}3:1\notin A,\ P\subseteq A\},\\
 \mathcal B&=\{B\in\tbinom{[n]}3:1\in B,\ B\cap P\neq\emptyset\}
       \cup\{B\in\tbinom{[n]}3:1\notin B,\ Q\subseteq B\}.
\end{align*}
\end{enumerate}
In particular, this characterizes equality in
Theorem~\ref{sec123:cck} at $|\mathcal A|=S+1$ for
$k,\ell\geq3$.
\end{proposition}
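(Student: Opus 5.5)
The plan has three parts: compute the lex value, prove the inequality by reduction to the shifted case, and then reduce equality to a structural statement about $\mathcal B$.

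\emph{The lex value.} The identity $|N_\ell(\mathcal L_\ell([n],k,S+1))|=\binom n\ell-T-1$ comes from Proposition~\ref{prop:restricted-lex-count} with $r=2$ and $t=1$. Layer $1$ has capacity $c_1=S$, so $m=S+1$ leaves exactly one set in layer $2$, namely $\{2,3,\ldots,k+1\}$. Then \eqref{eq:restricted-companion-count} gives
\[
\binom{n-1}{\ell-1}+1-\Psi(n-2,k-1,\ell-1;1).
\]
Here $\Psi(n-2,k-1,\ell-1;1)$ counts the $(\ell-1)$-subsets of $[3,n]$ avoiding a fixed $(k-1)$-set, which is $\binom{n-k-1}{\ell-1}$. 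The companion size is therefore $T+1$, as claimed.

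\emph{The inequality.} I would reduce to shifted families. Shift the pair $(\mathcal A,\mathcal B)$ simultaneously, where $\mathcal B=\overline N_\ell(\mathcal A)$. Each shift keeps $|\mathcal A|$ fixed and does not decrease $|\overline N_\ell(\mathcal A)|$, since the shifted companion is cross-intersecting with the shifted $\mathcal A$. Nontriviality may be lost along the way, but two facts control this:
\begin{itemize}
\item A family of size $S+1>S$ inside $\mathcal B(n,k,\ell)$ is automatically nontrivial with nontrivial companion, by Lemma~\ref{lem:restricted-universe}(ii).
\item The restricted-lex comparison only needs $\mathcal A\subseteq\mathcal B(n,k,\ell)$.
\end{itemize}
So the real issue is whether the shifted family lands in $\mathcal B(n,k,\ell)$, that is, whether its companion stays nontrivial. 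If at some step the companion becomes a trivial family, say a star at $1$, it has size at most $\binom{n-1}{\ell-1}$. In that case I would instead bound the companion directly: it is a star whose members must meet all $S+1$ sets of $\mathcal A$. Since $|\mathcal A|>S$, not all of $\mathcal A$ lies in the Hilton--Milner-type range, which should force at most $T$ members in the star part. An alternative route is to use Frankl's degree/diversity-type argument, or Lemma~\ref{lem:normalization} combined with the critically shifted analysis. Granting this, Theorem~\ref{sec123:cck} yields the inequality.

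\emph{Equality.} Equality means $|\mathcal B|=T+1$ with both families nontrivial and $|\mathcal A|=S+1$. I would view this symmetrically: $\mathcal B$ is a nontrivial $\ell$-family, and $\mathcal A=\overline N_k(\mathcal B)$ after saturation, using $\mathcal A\subseteq\overline N_k(\mathcal B)$ and the symmetric inequality. Let $x$ be a point of maximum degree in $\mathcal B$. Then $\mathcal B$ splits into a star part at $x$ and an exceptional part $\mathcal B(\overline x)$.

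\begin{itemize}
\item \emph{$\mathcal B(\overline x)$ is a single set $Y$.} Then $\mathcal A$ consists of the $k$-sets through $x$ meeting $Y$ (there are $S$ of these) plus the sets avoiding $x$ that meet all of $\mathcal B$. Since $|\mathcal A|=S+1$, exactly one set $X$ avoids $x$. Saturation then forces $\mathcal B=\{B:x\in B,\ B\cap X\neq\emptyset\}\cup\{Y\}$, which is form (i).
\item \emph{$\mathcal B(\overline x)$ has at least two members.} The members of $\mathcal A$ through $x$ must meet every member of $\mathcal B(\overline x)$, and this should force $|\mathcal A|$ and $|\mathcal B|$ jointly below the threshold. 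The tool is the dual Kruskal--Katona uniqueness of Theorem~\ref{cokk}, applied to the sections $\mathcal A(x)$ and $\mathcal B(\overline x)$ inside $[n]\setminus\{x\}$. This is exactly the threshold situation covered by Füredi's classification of near-extremal cross-intersecting configurations when $k+\ell\geq7$. The only survivor at $k=\ell=3$ is the configuration where the exceptional parts are $2$-stars $\{B:P\subseteq B\}$, which is the cross-triangle classification of \cite{Huang2026} and gives form (ii).
\end{itemize}

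\emph{Main obstacle.} I expect the hardest step to be the second case of the equality analysis: ruling out, or identifying, configurations with larger exceptional parts without shiftedness. I would try first to use Möors' uniqueness criterion for each residual layer, together with the rigidity of the equality-preserving transfers noted after the proof of Theorem~\ref{sec123:cck}. That analysis shows the only lossless transfer at this threshold is $(2,3)\to(2,3,4)$ for $k=\ell=3$. I would then pull the equality structure back through the shifts by reverse shifting.
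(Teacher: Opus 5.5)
\textbf{A gap in the inequality step.} Your lex-value computation and your Case~1 saturation argument are correct. The gap is the inequality $|\mathcal B|\leq T+1$, at exactly the step you ``grant''. After a shift that destroys nontriviality, knowing that $|\mathcal B|$ is at most the companion size of the shifted family gives nothing. Your proposed patch is also false: you claim that $|\mathcal A|>S$ keeps part of the shifted family off the star, so the star companion has at most $T$ members. Here is a counterexample.

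Take a pair of form~(i) with $X\cap Y=\{j\}$ and apply $s_{1j}$. The set $X$ moves to $X'=(X\setminus\{j\})\cup\{1\}$, which avoids $Y$ and so is not already present. Hence $s_{1j}(\mathcal A)$ is a subfamily of the star at $1$, of size $S+1$. Its full companion contains every $\ell$-set through $1$. It contains no $\ell$-set avoiding $1$, since such a set would have to equal $Y$, and $Y$ misses $X'$. So the companion is the whole star, of size $\binom{n-1}{\ell-1}>T+1$ because $n>k+\ell$. Neither Theorem~\ref{sec123:cck} nor any bound on the shifted companion recovers the estimate from here.

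To rescue the reduction, you would have to stop at critically shifted pairs via Lemma~\ref{lem:normalization} and bound $|\mathcal B|$ there. The available tools, such as Lemma~\ref{lem:critical-separation}, assume saturation and $n\geq2\max\{k,\ell\}$; neither is given under the proposition's hypotheses.

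\textbf{How the paper avoids this.} The paper uses no shifting here. F\"uredi's threshold theorem says that a nontrivial cross-intersecting pair with sizes greater than $S$ and $T$ has sizes exactly $S+1$ and $T+1$. So $|\mathcal A|=S+1$ gives $|\mathcal B|\leq T+1$ at once. Your Case~2 already rests on F\"uredi's equality analysis, and on Huang's cross-triangle classification when $k=\ell=3$. Use the same theorem for the inequality. With that change your plan coincides with the paper's, and your Case~1 replaces its M\"ors-based identification of the star part.

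\textbf{The fallback for Case~2.} If you intend the M\"ors/transfer argument of your last paragraph to replace those citations, it does not work as stated. The transfer analysis after Theorem~\ref{sec123:cck} concerns shifted families only. Reverse shifting cannot transport an equality structure, because shifting does not preserve isomorphism type. For example, if $a\in Y\setminus X$ and $b\in X\setminus Y$ with $1<a<b$, then $s_{ab}$ maps the form-(i) pair built on $(X,Y)$ to the form-(i) pair built on $((X\setminus\{b\})\cup\{a\},Y)$. This changes the invariant $|X\cap Y|$. You would need an inverse-shift lemma for forms~(i) and~(ii), analogous to Lemma~\ref{lem:inverse-two-point}, and you do not supply one. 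For the same reason, the paper applies this proposition directly to the original, unshifted pair in the proof of Theorem~\ref{mthm}.
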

\begin{proof}
We first record the relevant special case of M\"ors' equality
criterion. For $\mathcal H\subseteq\binom{[2,n]}{k-1}$ of
size $S$, the canonical $(n-k)$-binomial expansion is
\[
S=\sum_{i=2}^{\ell+1}\binom{n-i}{n-k-i+2}.
\]
It has exactly $\ell$ terms. Equivalently, its last lower index
is $n-k-\ell+1>n-k-\ell$, so Theorem~\ref{cokk} gives
$|N_\ell^{[2,n]}(\mathcal H)|\geq\binom{n-1}{\ell}-1$,
with equality if and only if, for some $Y\in\binom{[2,n]}\ell$,
\begin{equation}\label{eq:threshold-star-equality}
\mathcal H=\{H\in\tbinom{[2,n]}{k-1}:H\cap Y\neq\emptyset\}.
\end{equation}
Indeed, this is the ordinary lexicographic segment of size $S$
when $Y=[2,\ell+1]$, and its neighborhood consists of all
$\ell$-sets of $[2,n]$ other than $Y$. This determines the
star part up to isomorphism without a shiftedness assumption.

We use F\"uredi's threshold theorem and its equality analysis
\cite[Section~3]{Furedi1995}:
if a nontrivial cross-intersecting pair has sizes greater than
$S$ and $T$, respectively, then its sizes are exactly $S+1$
and $T+1$. For $k,\ell\geq3$ with $k+\ell\geq7$, its
equality classification gives precisely (i).
Consequently $|\mathcal B|\leq T+1$, and, outside
$k=\ell=3$, equality has the stated form. When $k=\ell=3$,
equality is equivalent to
$|\mathcal A|=|\mathcal B|=3n-8$; the classification in
\cite[Proposition~7.2]{Huang2026} gives (i) and (ii).
In both cases we label the distinguished point $1$.
In (i), the residual star part has size $S$ and every member
meets the exceptional set $Y$ of $\mathcal B$. Its neighborhood
therefore omits $Y$, so equality holds in the bound above and
its form is given by \eqref{eq:threshold-star-equality}.
The other star part is determined in the same way.

Conversely, the two displayed constructions are cross-intersecting
and nontrivial. In (i) their sizes are $S+1$ and $T+1$.
In (ii) each family has size $(2n-5)+(n-3)=3n-8$.
In both cases the displayed $\mathcal B$ is the full companion.
Indeed, in (i) an $\ell$-set avoiding $1$ and meeting every
member of the star part of $\mathcal A$ must contain $Y$,
and hence equal $Y$; a set containing $1$ must meet $X$.
In (ii) a triple avoiding $1$ must contain $Q$, whereas a
triple containing $1$ must meet $P$.
These assertions follow by extending a missed prescribed point
to a disjoint member, using $n>k+\ell$.

The restricted-lex family of size $S+1$ is the family in (i)
with $X=[2,k+1]$ and $Y=[2,\ell+1]$.
Its companion has size $T+1$, proving the displayed identity.
Finally, if $\mathcal A\subseteq\mathcal B(n,k,\ell)$ and
$|\mathcal A|=S+1$, Lemma~\ref{lem:restricted-universe}(ii)
supplies both nontriviality assumptions.
\end{proof}

\section{The product of critically shifted families}\label{sec:critical-revised}
Throughout this section, $k,\ell\geq3$,
$n\geq\max\{2k,2\ell\}$, and $n>k+\ell$, unless stated otherwise.
We distinguish critical shiftedness of a pair from the following
single-family condition: a nontrivial family $\mathcal F$ is
\emph{critically shifted} if it is not shifted and, for every $i<j$,
either $s_{ij}(\mathcal F)=\mathcal F$ or $s_{ij}(\mathcal F)$ is
trivial. Put
\begin{align*}
Q(n,k,\ell)&=\sum_{i=1}^{2}
 \left(\binom{n-i}{\ell-1}-\binom{n-k-i}{\ell-1}\right),\\
F(n,k,\ell)&=\left(\binom{n-2}{k-2}+1\right)Q(n,k,\ell).
\end{align*}
For disjoint sets $R\in\binom{[n]}2$ and $X\in\binom{[n]}k$, define
\begin{align*}
\mathcal C_k(R,X)&=\{A\in\tbinom{[n]}k:R\subseteq A\}\cup\{X\},\\
\mathcal D_\ell(R,X)&=\{B\in\tbinom{[n]}\ell:
 B\cap R\neq\emptyset,\ B\cap X\neq\emptyset\}.
\end{align*}
We first establish the two-core decomposition of critically shifted
families. Theorem~\ref{thm:two-core-product} gives the sharp product
bound for every cross-intersecting pair admitting this decomposition,
without assuming nontriviality. Corollary~\ref{critically} then
applies it to the critically shifted case.

\subsection{Critical pairs and section cores}
The structural statements in Lemmas~\ref{lem:critical-sections},
\ref{lem:section-cores} and~\ref{lem:two-cores} do not require
the standing numerical assumptions of this section.
They use only their explicitly stated hypotheses and, in
Lemma~\ref{lem:two-cores}, the setup of
Lemma~\ref{lem:section-cores}.

\begin{lemma}\label{lem:critical-sections}
Let $\mathcal F\subseteq\binom{[n]}a$ be nontrivial.
The shift along an ordered pair $(i,j)$ makes $\mathcal F$ trivial
if and only if
\[
\mathcal F(\overline{ij})=\emptyset,\qquad
\mathcal F(i,\overline j)\cap\mathcal F(j,\overline i)=\emptyset.
\]
In that event both single-point sections are nonempty and the common
intersection after the shift is exactly $\{i\}$.
\end{lemma}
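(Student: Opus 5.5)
We prove Lemma~\ref{lem:critical-sections} by describing $s_{ij}(\mathcal F)$ section by section. Write $\mathcal F'=s_{ij}(\mathcal F)$. The shift leaves four kinds of members to track:
\begin{itemize}
\item it fixes every member containing both $i$ and $j$;
\item it fixes every member avoiding both $i$ and $j$;
\item it keeps the sets of $\mathcal F\langle i,\overline j\rangle$;
\item it sends each member $F\in\mathcal F\langle j,\overline i\rangle$ to $(F\setminus\{j\})\cup\{i\}$, unless that set is already present, in which case $F$ stays.
\end{itemize}
Hence
\[
\mathcal F'(\overline{ij})=\mathcal F(\overline{ij}),\qquad
\mathcal F'(i,\overline j)=\mathcal F(i,\overline j)\cup\mathcal F(j,\overline i),\qquad
\mathcal F'(j,\overline i)=\mathcal F(i,\overline j)\cap\mathcal F(j,\overline i).
\]
The plan is to read off triviality of $\mathcal F'$ from these identities.

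First we show that $\mathcal F'$ is trivial if and only if $i$ lies in every member of $\mathcal F'$. For the nontrivial direction, suppose some $x\ne i$ lies in every member of $\mathcal F'$. We check that $x$ then lies in every member of $\mathcal F$, contradicting nontriviality.
\begin{itemize}
\item If $x\ne j$, every member of $\mathcal F$ not containing $x$ would map to a member of $\mathcal F'$ not containing $x$, since the shift only swaps $j$ for $i$. So $x$ already lies in every member of $\mathcal F$.
\item If $x=j$, every member of $\mathcal F'$ contains $j$. Shifting never creates a set containing $j$ and not $i$ from a set lacking $j$, so every member of $\mathcal F$ contains $j$.
\end{itemize}

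Thus $\mathcal F'$ is trivial exactly when $\mathcal F'(\overline{ij})=\emptyset$ and $\mathcal F'(j,\overline i)=\emptyset$. By the identities above, these are the two displayed conditions. This proves the equivalence in both directions, since each step is an equivalence.

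For the final assertion, assume the conditions hold. Suppose first that $\mathcal F(j,\overline i)=\emptyset$. Then every member of $\mathcal F$ contains $i$, because members avoiding both $i$ and $j$ are absent too. This contradicts nontriviality. Symmetrically, if $\mathcal F(i,\overline j)=\emptyset$, then every member contains $j$, again a contradiction. So both single-point sections are nonempty.

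It remains to show that the common intersection of $\mathcal F'$ is exactly $\{i\}$. Suppose some $y\ne i$ lies in every member of $\mathcal F'$. The argument above, which only used that the shift replaces $j$ by $i$, shows that $y$ then lies in every member of $\mathcal F$, which is impossible. The same argument also covers $y=j$; alternatively, this case is excluded directly because $\mathcal F'(i,\overline j)\ne\emptyset$.

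The main care needed is the bookkeeping in the section identities. In particular, the members of $\mathcal F\langle j,\overline i\rangle$ that stay unshifted are exactly those whose image $(F\setminus\{j\})\cup\{i\}$ is already in $\mathcal F$; these are precisely the ones recorded by the intersection $\mathcal F(i,\overline j)\cap\mathcal F(j,\overline i)$.
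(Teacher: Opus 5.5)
Your proof is correct and is essentially the paper's argument: the receiving point $i$ is the only point that can be newly acquired, so no other point becomes common, and the members omitting $i$ after the shift are exactly those of $\mathcal F(\overline{ij})$ together with the unshifted $j$-members indexed by $\mathcal F(i,\overline j)\cap\mathcal F(j,\overline i)$, which your section identities record explicitly. In the case $x=j$ your stated justification is slightly off, since the relevant fact is simply that members lacking $j$ are fixed by the shift; more generally $s_{ij}(F)\subseteq F\cup\{i\}$ handles every $x\neq i$ at once, so the case split is unnecessary.
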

\begin{proof}
The receiving point $i$ is the only point whose membership can be
newly acquired by a set. Thus it is the only possible new common
point. A member avoiding both points would remain and omit $i$.
If the two sections had a common member $E$, the set
$E\cup\{j\}$ would also remain, since its $i$-counterpart already
belongs to the family. Conversely, the two displayed conditions
ensure that every member omitting $i$ is shifted to one containing
$i$. Nontriviality supplies members omitting each of the two points,
so both sections are nonempty.
\end{proof}
We call the unordered pair $\{i,j\}$ \emph{critical} for
$\mathcal F$ when the conditions of this lemma hold. Thus criticality
is independent of the orientation. A pair avoided by a member of a
critically shifted family is not critical, and its left shift fixes
the family. We shall repeatedly use this observation.

\begin{lemma}[Separation of the two families]\label{lem:critical-separation}
Let $a,b\geq3$ and $n\geq2\max\{a,b\}$. Suppose that
$\mathcal F\subseteq\binom{[n]}a$ and
$\mathcal G\subseteq\binom{[n]}b$ are saturated, nontrivial,
cross-intersecting families, and that every simultaneous left shift
either fixes both families or makes at least one trivial.
At most one of the two families has a critical pair.
\end{lemma}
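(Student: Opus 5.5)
Suppose, for contradiction, that $\{i,j\}$ is critical for $\mathcal F$ and $\{u,v\}$ is critical for $\mathcal G$. By Lemma~\ref{lem:critical-sections},
\[
\mathcal F(\overline{ij})=\emptyset,\qquad \mathcal F(i,\overline j)\cap\mathcal F(j,\overline i)=\emptyset,
\]
and both single-point sections are nonempty; the analogous statements hold for $\mathcal G$ at $\{u,v\}$. The plan is to use saturation to show that $\mathcal G$ is very large near $\{i,j\}$, and to obtain the symmetric conclusion for $\mathcal F$ near $\{u,v\}$. These two conclusions should be incompatible with the vanishing of $\mathcal G(\overline{uv})$ or with the disjointness of its two single-point sections.

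The first consequence is that every $\mathcal F$-member meets $\{i,j\}$. Hence every $b$-set containing $\{i,j\}$ meets all of $\mathcal F$, and saturation gives $\mathcal G\supseteq\{B:\{i,j\}\subseteq B\}$. Symmetrically, $\mathcal F\supseteq\{A:\{u,v\}\subseteq A\}$. Now $\mathcal G(\overline{uv})=\emptyset$ forces every $b$-set containing $\{i,j\}$ to meet $\{u,v\}$. Since $b\geq3$ and $n\geq 2b$, there is room to choose a $b$-set through $i,j$ avoiding $u,v$ unless $\{i,j\}\cap\{u,v\}\neq\emptyset$. So the two critical pairs must share a point.

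If $\{i,j\}=\{u,v\}$, then both families contain the full $2$-star of $\{i,j\}$. I would pick $E\in\mathcal F(i,\overline j)$ and $E'\in\mathcal F(j,\overline i)$, which are distinct by the disjointness of the sections. The aim is to produce a $(b-1)$-set in both $\mathcal G(i,\overline j)$ and $\mathcal G(j,\overline i)$. A set $C\cup\{i\}$ with $C\subseteq[n]\setminus\{i,j\}$ lies in $\mathcal G$ exactly when $C$ meets every member of $\mathcal F(j,\overline i)$. Thus a common section member is a $(b-1)$-set $C$ meeting all members of both $\mathcal F(i,\overline j)$ and $\mathcal F(j,\overline i)$. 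I would argue by counting: the union of these sections has size at most $\binom{n-2}{a-1}$, and the $(a-1)$-sets meeting a fixed $C$ are numerous. So one can either take $C$ to be a transversal built from the cross-intersection with $\mathcal G$, or swap the roles of the families and use the section disjointness for $\mathcal F$ to force a contradiction.

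In the remaining case, write $i=u$ and $j\neq v$. Every $\mathcal F$-member meets $\{i,j\}$, every $\mathcal G$-member meets $\{i,v\}$, and the stars of $\{i,j\}$ in $\mathcal G$ and of $\{i,v\}$ in $\mathcal F$ are full. I would use $\mathcal F\supseteq\{A:\{i,v\}\subseteq A\}$ together with $\mathcal F(\overline{ij})=\emptyset$, which gives $\mathcal F(\overline{i},\overline{j})=\emptyset$. A member of $\mathcal F(j,\overline i)$ shows that $\mathcal F$ contains sets avoiding $i$. By the criticality hypothesis, the left shift between $v$ and $j$ fixes the pair, because that pair is not critical: nontriviality gives a member avoiding both $v$ and $j$, or we use the observation following Lemma~\ref{lem:critical-sections}. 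Shiftedness along $(v,j)$ or $(j,v)$ then transports the full structure at $\{i,v\}$ onto $\{i,j\}$. This should make the sections $\mathcal F(i,\overline j)$ and $\mathcal F(j,\overline i)$ share a member, contradicting criticality.

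I expect the equal-pair case $\{i,j\}=\{u,v\}$ to be the main obstacle. There the two criticality conditions are symmetric, and the contradiction must come from a genuine counting or saturation argument using $n\geq 2\max\{a,b\}$ and $a,b\geq3$. My first attempt would be to show by saturation that the sections $\mathcal G(i,\overline j)$ and $\mathcal G(j,\overline i)$ contain the full families of $(b-1)$-sets meeting a fixed $E$ and a fixed $E'$ respectively, and then to find a common $(b-1)$-set.
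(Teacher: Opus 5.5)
Your opening step, showing that the two critical pairs cannot be disjoint, is correct and matches the paper. The genuine gap is the case $\{i,j\}=\{u,v\}$. No counting or saturation argument can close it, because that configuration is compatible with every hypothesis except the shifting one. For $a=b=3$ and $n=8$, let $\mathcal F$ consist of three kinds of triples: all triples containing $\{1,2\}$, the triples $\{1\}\cup P$ with $P\in\binom{\{3,4,5\}}2$, and the triples $\{2\}\cup Q$ with $Q\in\binom{\{6,7,8\}}2$. Let $\mathcal G$ be the image of $\mathcal F$ under the transposition of $1$ and $2$. These families are nontrivial, cross-intersecting and saturated, and $\{1,2\}$ is critical for both. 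Only the shift hypothesis fails: $s_{36}$ changes $\mathcal F$ and trivializes neither family. In particular, the common $(b-1)$-set $C$ you hope to find does not exist here, since no $2$-set meets every $2$-subset of both $\{3,4,5\}$ and $\{6,7,8\}$.

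The missing idea is to use the shift hypothesis. Suppose $e=\{i,j\}$ is critical for both families and $x,y\notin e$. By your own first step, $\{x,y\}$ is critical for neither family, so $s_{xy}$ fixes both. Hence $\mathcal F(i,\overline j)$ and $\mathcal F(j,\overline i)$ are nonempty families shifted on $[n]\setminus e$. Both therefore contain the first $a-1$ points of $[n]\setminus e$, contradicting their disjointness.

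Your case $i=u$, $j\neq v$ also does not go through as written. Nontriviality supplies a member avoiding $v$ and a member avoiding $j$, but not one avoiding both. So you have not shown that $\{v,j\}$ is non-critical, and trying to do so leads to another configuration of two critical pairs sharing one point. Moreover, the shift that moves the $\{i,v\}$-star toward $\{i,j\}$ (replacing $v$ by $j$) lands in sets containing both $i$ and $j$, not in a common member of the two single-point sections.

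No shifting is needed here; saturation suffices. Suppose some $G\in\mathcal G$ with $i\notin G$ contained $j$. Then $v\in G$, and $(G\setminus\{v\})\cup\{i\}$ lies in the full $\{i,j\}$-star inside $\mathcal G$. This makes $G\setminus\{v\}$ a common member of $\mathcal G(i,\overline v)$ and $\mathcal G(v,\overline i)$, which is impossible. So every member of $\mathcal G$ omitting $i$ also omits $j$.

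Now take $F_0\in\mathcal F$ with $i\notin F_0$, so $j\in F_0$. The set $(F_0\setminus\{j\})\cup\{i\}$ meets each $G\in\mathcal G$: at $i$ if $i\in G$, and otherwise in $G\cap F_0$, which avoids $j$. Saturation puts this set in $\mathcal F$. Then $F_0\setminus\{j\}$ lies in both $\mathcal F(i,\overline j)$ and $\mathcal F(j,\overline i)$, a contradiction.
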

\begin{proof}
Suppose $e$ is critical for $\mathcal F$ and $f$ is critical for
$\mathcal G$. Saturation implies that $\mathcal F$ contains every
$a$-set containing $f$, and $\mathcal G$ contains every $b$-set
containing $e$. The pairs cannot be disjoint, since an $a$-set
containing $f$ and avoiding $e$ could then be chosen.

If $e\neq f$, write $e=\{x,y\}$ and $f=\{x,z\}$. Every member of
$\mathcal F$ omitting $x$ contains $y$ and omits $z$: otherwise its
replacement of $y$ by $x$ lies in the full $f$-star, contradicting
the disjoint sections at $e$. Similarly, every member of
$\mathcal G$ omitting $x$ contains $z$ and omits $y$.
Choose $F_0\in\mathcal F$ omitting $x$.
The set $(F_0\setminus\{y\})\cup\{x\}$ meets every member of
$\mathcal G$: those containing $x$ meet it at $x$, and those
omitting $x$ also omit $y$, so their original intersection with
$F_0$ is retained. Saturation puts this replacement in
$\mathcal F$, again contradicting criticality at $e$.

It follows that every critical pair of either family is the same
pair $e$. All shifts within $[n]\setminus e$ therefore fix both
families. The two nonempty single-point sections of $\mathcal F$
at $e$ are shifted on that ground set, so they both contain its
first $(a-1)$-set. This contradicts their disjointness.
\end{proof}
Consequently, in a saturated critically shifted pair exactly one
family admits a trivializing shift, and that family is critically
shifted in the single-family sense.

\begin{lemma}\label{lem:section-cores}
Let $\mathcal A\subseteq\binom{[n]}k$ be nontrivial and critically
shifted, with $k\geq3$. There is a critical pair $\{1,j\}$.
Choose the least such $j$, set $U=[n]\setminus\{1,j\}$, and put
\[
C=\bigcap_{E\in\mathcal A(1,\overline j)}E,\qquad
D=\bigcap_{E\in\mathcal A(j,\overline1)}E.
\]
Then $C,D\neq\emptyset$, $[2,j-1]\subseteq C$, and
$|C\cup D|\geq k$. Moreover:
\begin{enumerate}
\item[\rm(i)] $E\setminus\{x\}\in\mathcal A(1j)$ whenever
$E\in\mathcal A(1,\overline j)$ and $x\in E\setminus C$;
the analogous assertion holds for the other section and $D$;
\item[\rm(ii)] $\mathcal A$ is fixed under all left shifts within $U\setminus C$
and within $U\setminus D$;
\item[\rm(iii)] every member of $\mathcal A(1j)$ contains $C\setminus D$
or $D\setminus C$.
\end{enumerate}
\end{lemma}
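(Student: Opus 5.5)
The plan is to extract every claim from two facts about a nontrivial critically shifted family. By Lemma~\ref{lem:critical-sections}, a shift that changes $\mathcal A$ makes it trivial exactly at a critical pair. Any pair $\{u,v\}$ that is \emph{not} critical therefore has $s_{uv}(\mathcal A)=\mathcal A$, so every member containing $v$ but not $u$ has its $u$-replacement in $\mathcal A$. In particular, a pair avoided by some member is not critical.

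\emph{Existence of a critical pair $\{1,j\}$.} Since $\mathcal A$ is not shifted, some shift changes it, so a critical pair $\{i,j\}$ exists. Suppose $1\notin\{i,j\}$ and neither $\{1,i\}$ nor $\{1,j\}$ is critical; then $s_{1i}$ and $s_{1j}$ fix $\mathcal A$.
\begin{itemize}
\item A member containing $i$ but not $j$ and omitting $1$ would shift under $s_{1i}$ to a member avoiding both $i$ and $j$. This contradicts $\mathcal A(\overline{ij})=\emptyset$. Symmetrically for $j$.
\item Hence a member omitting $1$ contains both $i$ and $j$, and such a member exists by nontriviality.
\item Shifting it by $s_{1j}$ and by $s_{1i}$ produces the same residual set in $\mathcal A(i,\overline j)$ and in $\mathcal A(j,\overline i)$. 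This contradicts the disjointness in Lemma~\ref{lem:critical-sections}.
\end{itemize}
So some pair $\{1,j\}$ is critical, and we fix the least such $j$.

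\emph{Properties of $C$ and $D$.} Both sections are nonempty by Lemma~\ref{lem:critical-sections}.
\begin{itemize}
\item For (ii), take $u<v$ in $U\setminus C$. Some member of $\mathcal A(1,\overline j)$ omits $u$. We want one omitting both $u$ and $v$, so that $\{u,v\}$ is avoided and $s_{uv}$ fixes $\mathcal A$. If every member omitting $u$ contains $v$, swap $v$ for $u$ inside that section using fixedness under pairs already known to be non-critical; I expect to set this up as a small induction on $v$. The same argument works for $D$.
\item For (i), take $E\in\mathcal A(1,\overline j)$ and $x\in E\setminus C$. Some $E'$ in the section misses $x$. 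Using (ii), shift $E'$ within $U\setminus C$ to produce $E\setminus\{x\}\cup\{y\}$ for suitable $y$. The required membership $E\setminus\{x\}\in\mathcal A(1j)$ then follows from fixedness of $s_{1j}$ and of shifts involving $j$ and points outside $C$. This needs care with the orientation of $j$ relative to $x$.
\item For $[2,j-1]\subseteq C$, let $1<x<j$. Minimality of $j$ makes $\{1,x\}$ non-critical. If some $E\in\mathcal A(1,\overline j)$ omitted $x$, then $E\cup\{1\}$ avoids $\{x,j\}$, so $s_{xj}$ fixes $\mathcal A$. Applying $s_{xj}$ to members of $\mathcal A(j,\overline 1)$ forces $x$ into every member of that section. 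Then $s_{1x}$ sends such a member to a set of the form $(G\setminus\{x\})\cup\{1\}$ with $j$ retained. Combining this with $E$ through $s_{xj}$ should give a common residual set of the two sections, contradicting disjointness.
\item For $C\neq\emptyset$: if $C=\emptyset$, then by (ii) the section $\mathcal A(1,\overline j)$ is shifted on all of $U$ and contains the first $(k-1)$-set of $U$. So does $\mathcal A(j,\overline 1)$, because it is shifted on $U\setminus D$, which gives a common member. The same argument shows $D\neq\emptyset$.
\item For $|C\cup D|\geq k$: if $|C\cup D|<k$, both sections are shifted on $U\setminus(C\cup D)$ and contain $C$ and $D$ respectively. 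Each then contains the set formed by $C\cup D$ together with the first remaining points of $U$, again a common member.
\end{itemize}

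\emph{Part (iii).} Let $H\cup\{1,j\}\in\mathcal A$, and suppose $H$ misses some $c\in C\setminus D$ and some $d\in D\setminus C$. Some member $E$ of $\mathcal A(j,\overline 1)$ omits $c$, and some member of $\mathcal A(1,\overline j)$ omits $d$. By (ii), shifts within $U\setminus D$ can move $c$-free points and shifts within $U\setminus C$ can move $d$-free points. The aim is to transform $H$ by replacing $j$ or $1$ so that its $1$-image and its $j$-image coincide, giving a common residual member of the two sections.

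The main obstacle is the bookkeeping in (ii) and (iii): keeping track of which shifts are known to fix $\mathcal A$, and handling elements of $U$ smaller than $j$ versus larger. I would first prove (ii) by induction on pairs in increasing order, then derive (i), and finally (iii) and the size bound from them.
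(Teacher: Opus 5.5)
Your existence argument for $\{1,j\}$ and your proof of $[2,j-1]\subseteq C$ agree with the paper. In the latter, the last step is just $s_{xj}$ applied to $\{1,j\}\cup(G\setminus\{x\})$, which returns $\{1\}\cup G$, so $G$ itself is the common member; $E$ only serves to make $s_{xj}$ fixed.

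The first genuine gap is in (i) and (ii). Your sketch for (ii) does not produce a member avoiding $\{u,v\}$, since swapping $v$ for $u$ yields a set containing $u$, and the induction on $v$ is never specified. Your derivation of (i) invokes fixedness of $s_{1j}$, which is false: $\{1,j\}$ is the critical pair, so $s_{1j}$ is precisely the shift that makes $\mathcal A$ trivial.

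The missing idea is to prove (i) first, and directly. If $x\in U\setminus C$, then $x>j$ because $[2,j-1]\subseteq C$; this settles your orientation concern. A member $\{1\}\cup E'$ of the $1$-section with $x\notin E'$ avoids $\{j,x\}$. So this pair is not critical and $s_{jx}$ fixes $\mathcal A$; applied to $\{1\}\cup E$ it gives $\{1,j\}\cup(E\setminus\{x\})$. For $D$, use $s_{1x}$, which is fixed because some $j$-section member omits $x$. Then (ii) is one line: if the $1$-section member omitting $u$ contains $v$, (i) replaces $v$ by $j$ and gives a member avoiding $\{u,v\}$.

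Part (iii) and the bound $|C\cup D|\geq k$ also lack their key step. In (iii), part (ii) is of no use: $c\in C$ and $d\in D$ put $\{c,d\}$ in neither $U\setminus C$ nor $U\setminus D$. Your plan to turn $\{1,j\}\cup H$ into members of both sections has no mechanism; in particular, no left shift ever removes $1$. The point is that $\{1,j\}\cup H$ itself avoids $\{c,d\}$, so the shift on this pair fixes $\mathcal A$.
\begin{itemize}
\item If $c<d$, apply it to a $j$-section member omitting $c$. That member contains $d$, and the shift yields a $j$-section member omitting $d$, contradicting the definition of $D$.
\item If $d<c$, apply it to a $1$-section member omitting $d$; the result omits $c$, contradicting the definition of $C$.
\end{itemize}

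For $|C\cup D|\geq k$, shiftedness on $U\setminus(C\cup D)$ gives no reason for a $1$-section member to contain $D\setminus C$. You need that $D\setminus C$ is an initial segment of $U\setminus C$. The $j$-section is fixed by shifts within $U\setminus C$, so a point of $D\setminus C$ cannot be preceded by a point of $U\setminus(C\cup D)$; that shift would remove a compulsory point. Then the $1$-section, being shifted on $U\setminus C$, contains $C$ together with the first $k-1-|C|$ points of $U\setminus C$. This set is exactly $(C\cup D)\cup W$, where $W$ consists of the first $k-1-|C\cup D|$ points of $U\setminus(C\cup D)$. The $j$-section contains the same set by symmetry, which contradicts disjointness.

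Similarly, when $C=\emptyset$, the $j$-section contains the first $(k-1)$-set of $U$ for a different reason than the one you give. It holds because (ii) then makes $\mathcal A$ fixed under all shifts within $U$, not because the $j$-section is shifted on $U\setminus D$.
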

\begin{proof}
Suppose no critical pair contains $1$, and choose a critical pair
$\{u,v\}$. All shifts involving $1$ fix $\mathcal A$. A member
omitting $1$ must contain both $u,v$, since otherwise shifting its
sole point in that pair to $1$ would produce a member avoiding the
pair. Such a member exists by nontriviality. Replacing $u$ and $v$,
respectively, by $1$ produces a common member of the two
single-point sections at $\{u,v\}$, a contradiction.

For the least $j$ in the statement, the two sections are nonempty
and disjoint. If $2\leq x<j$ and $x\notin C$, a member of the
$1$-section omits $x$, so the pair $\{x,j\}$ is not critical and
its shift fixes $\mathcal A$. Every member of the $j$-section
therefore contains $x$; otherwise shifting $j$ to $x$ would produce
a member avoiding $\{1,j\}$. Minimality of $j$ also fixes the
$(1,x)$-shift. Applying these fixed shifts successively to
$\{j\}\cup E$ gives $\{1,j\}\cup(E\setminus\{x\})$ and then
$\{1\}\cup E$. This contradicts disjointness of the sections.
Thus $[2,j-1]\subseteq C$.

If $x\in U\setminus C$, then $x>j$. A member of the $1$-section
omitting $x$ shows that $\{j,x\}$ is not critical; its fixed shift
proves (i). For the other section use the fixed $(1,x)$-shift when
$x\notin D$. For $u<v$ in $U\setminus C$, choose a member of the
$1$-section omitting $u$. If it also omits $v$, it gives a member
avoiding the pair. Otherwise (i) gives such a member by deleting
$v$ and adjoining $j$. This proves (ii), and the argument for $D$
is identical. If either core were empty, both sections would be
shifted on all of $U$ and contain the same first $(k-1)$-set.
Hence both cores are nonempty.

The intersection of a nonempty family fixed under left shifts is
an initial segment of its ordered ground set: a compulsory point
cannot have an earlier noncompulsory point. Apply this to the
projections of the $j$-section onto $U\setminus C$, and of the
$1$-section onto $U\setminus D$. Thus $D\setminus C$ and
$C\setminus D$ are initial segments of these respective ground sets.
Suppose that $|C\cup D|\leq k-1$, and write $V=C\cup D$.
Let $W$ consist of the first $k-1-|V|$ points of $U\setminus V$.
Since the $1$-section is fixed under all left shifts within
$U\setminus C$, it contains $C$ together with the first
$k-1-|C|$ points of $U\setminus C$.
As $D\setminus C$ is an initial segment of that ground set,
this member is exactly $V\cup W$.
The same argument with $C$ and $D$ interchanged shows that
$V\cup W$ also belongs to the $j$-section, contradicting
the disjointness of the two sections.

Finally, if $Z\in\mathcal A(1j)$ omits
$c\in C\setminus D$ and $d\in D\setminus C$, the member
$\{1,j\}\cup Z$ avoids $\{c,d\}$, so its shift is fixed.
If $c<d$, apply it to a member of the $j$-section omitting $c$;
this removes the compulsory point $d$. If $d<c$, use the other
section. Both cases are contradictions, proving (iii).
\end{proof}

\subsection{The section partition and the two-core reduction}
\label{subsec:core-revised}
Continue with the least critical pair $\{1,j\}$ from
Lemma~\ref{lem:section-cores}, and put
$C_1=C\setminus D$ and $D_1=D\setminus C$.
Define
\begin{equation}\label{eq:prescribed-partition}
\begin{split}
\mathcal Z_C
 &=\{Z\in\mathcal A(1j):C_1\subseteq Z,\ D_1\nsubseteq Z\},\\
\mathcal Z_D
 &=\{Z\in\mathcal A(1j):D_1\subseteq Z,\ C_1\nsubseteq Z\},\\
\mathcal Z_0
 &=\{Z\in\mathcal A(1j):C_1\cup D_1\subseteq Z\}.
\end{split}
\end{equation}
By Lemma~\ref{lem:section-cores}(iii), the three families in
\eqref{eq:prescribed-partition} form a disjoint partition of
$\mathcal A(1j)$.

\begin{lemma}[Two disjoint cores]\label{lem:two-cores}
There are disjoint sets $R,S$ such that
$2\leq|R|,|S|\leq k$, $|R|+|S|\geq k+2$, and every member of
$\mathcal A$ contains $R$ or $S$. The two resulting parts are
nonempty and disjoint.
\end{lemma}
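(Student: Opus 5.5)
We take the natural candidates $R=\{1\}\cup C$ and $S=\{j\}\cup D$, both read as subsets of $[n]$, and then adjust them using the partition \eqref{eq:prescribed-partition}.

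\textbf{Step 1: sizes and disjointness.} Since $C,D\neq\emptyset$ by Lemma~\ref{lem:section-cores}, we have $|R|,|S|\geq2$. Since $C,D$ are intersections of $(k-1)$-sets, $|R|,|S|\leq k$. The bound $|C\cup D|\geq k$ gives $|R|+|S|\geq k+2$ when $C\cap D=\emptyset$. Here the parts $\mathcal A\langle 1,\overline j\rangle$ and $\mathcal A\langle j,\overline 1\rangle$ contain $R$ and $S$ respectively, and Lemma~\ref{lem:critical-sections} shows they are nonempty. No member of $\mathcal A$ avoids both $1$ and $j$, again by Lemma~\ref{lem:critical-sections}.

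The remaining members are $\{1,j\}\cup Z$ with $Z\in\mathcal A(1j)$. By Lemma~\ref{lem:section-cores}(iii), such a $Z$ contains $C_1$ or $D_1$. If $C\cap D=\emptyset$, then $C_1=C$ and $D_1=D$, so every member contains $R$ or $S$, as required.

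\textbf{Step 2: the general case $C\cap D\neq\emptyset$.} Put $I=C\cap D$. The difficulty is that $\mathcal Z_C$ need not contain $I$. We redefine the cores as
\[
R=\{1\}\cup C_1\cup I',\qquad S=\{j\}\cup D_1\cup I'',
\]
for a suitable split of $I$, or more simply as $R=\{1\}\cup C$ and $S=\{j\}\cup D_1$. For the latter choice we must show that every $Z\in\mathcal Z_C\cup\mathcal Z_0$ lacking $D_1$ contains $I$. To do this we use the fixed shifts from Lemma~\ref{lem:section-cores}(ii). If $Z\in\mathcal Z_C$ omits $x\in I$, then $\{1,j\}\cup Z$ avoids the pair $\{x,y\}$ for some $y\in D_1$ not in $Z$. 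The shift along this pair therefore fixes $\mathcal A$. Applying it to a $j$-section member yields a $j$-section member omitting the compulsory point $x$ or $y$, exactly as in the proof of part (iii).

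We then distribute $\mathcal Z_0$ to whichever side its contents allow. Disjointness $R\cap S=\emptyset$ holds because $C_1\cap D_1=\emptyset$ and $I$ is placed on one side only.

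\textbf{Step 3: recheck the bounds for the modified cores.} We have $|R|+|S|\geq 2+|C\cup D|\geq k+2$, since every point of $C\cup D$ is assigned to exactly one side. We also need $|S|\geq2$, which requires $D_1\neq\emptyset$. If $D\subseteq C$, the roles must be swapped, or the argument of the last paragraph of Lemma~\ref{lem:section-cores} (the common first set) gives a contradiction, as $|C\cup D|=|C|\leq k-1$ would violate $|C\cup D|\geq k$. Symmetrically, $C\subseteq D$ is excluded.

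\textbf{The two parts.} The parts are the members containing $R$ and the members containing $S$. They are disjoint because no member contains both: that would need more than $k$ points, as $|R|+|S|\geq k+2$. They are nonempty since they contain the two single-point sections.

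\textbf{Main obstacle.} The hardest part is the shifting argument in Step 2, showing that members of $\mathcal Z_C$ contain the common part $I$. If it fails, we fall back on assigning $I$ to the side forced by (i) of Lemma~\ref{lem:section-cores}.
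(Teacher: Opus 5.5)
Your Step 1 is correct: when $C\cap D=\emptyset$, the cores $\{1\}\cup C$ and $\{j\}\cup D$ work exactly as you say. The gap is Step 2, and it is more than a missing detail. The claim behind your ``simpler'' choice $R=\{1\}\cup C$, $S=\{j\}\cup D_1$, namely that every $Z\in\mathcal Z_C$ contains $I$, is false.

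Take $k=4$, $n\geq8$ and $\mathcal A=\{1234,1235,1456,2456\}$ (strings denote sets, as in the paper). This family is nontrivial and critically shifted. The shifts $s_{12},s_{13},s_{23},s_{45},s_{46},s_{56}$ and every shift involving a point $\geq7$ fix it. For $y\in\{4,5,6\}$, the shifts $s_{1y},s_{2y},s_{3y}$ make $1$, $2$, $3$, respectively, a common point. The least critical pair is $\{1,4\}$, and
$C=\{2,3,5\}$, $D=\{2,5,6\}$, $I=\{2,5\}$, $\mathcal A(14)=\{23,56\}$.
Thus $\mathcal Z_C=\{23\}$ omits $5\in I$, and $\mathcal Z_D=\{56\}$ omits $2\in I$. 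The member $1234$ contains neither $\{1,2,3,5\}$ nor $\{4,6\}$. Symmetrically, $1456$ contains neither $\{1,3\}$ nor $\{2,4,5,6\}$. The correct cores are $\{1,2,3\}$ and $\{4,5,6\}$, so $I$ genuinely has to be split, and $C$ and $D$ alone do not determine the split.

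Your shifting sketch fails at exactly this point. The points $x\in I$ and $y\in D_1$ both lie in $D$, so every $j$-section member contains both and is fixed by the shift along $\{x,y\}$. A contradiction is available only when $y<x$, using a $1$-section member omitting $y$. When $x<y$ you would need a member of $\mathcal Z_D$ omitting $x$, and none need exist; here $x=5$, $y=6$.

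So the real content of the lemma is what you leave as ``a suitable split'' and ``distribute $\mathcal Z_0$''. The fallback via Lemma~\ref{lem:section-cores}(i) does not supply it, since (i) only concerns deleting non-core points from single-point sections.

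The paper determines the split through the auxiliary cores $\widehat C_2=\bigcap\mathcal Z_C$ and $D_2=\bigcap\mathcal Z_D$. It first disposes of the degenerate cases, where $\mathcal Z_C$ or $\mathcal Z_D$ is empty, or $D_2$ meets $C_1$, or $\widehat C_2$ meets $D_1$. These yield a two-element core plus one exceptional member. Further shifting arguments then prove $C_1\subseteq\widehat C_2\subseteq C$, $D_1\subseteq D_2\subseteq D$ and $\widehat C_2\cup D_2=C\cup D$.

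If $\widehat C_2\cap D_2=\emptyset$, the paper shows that every member of $\mathcal Z_0$ contains $\widehat C_2$ or $D_2$, and takes the cores $\{1\}\cup\widehat C_2$ and $\{j\}\cup D_2$. This is your split, with $I'=I\cap\widehat C_2$. If $J=\widehat C_2\cap D_2\neq\emptyset$, the cores are instead $J$ and $\{1,j\}\cup((C\cup D)\setminus J)$, which do not follow your template of one core through $1$ and the other through $j$. Your proposal contains none of these steps.
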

\begin{proof}
Put $I=C\cap D$ and $V=C\cup D$. Both $C_1,D_1$ are nonempty,
because $|V|\geq k$ and $|C|,|D|\leq k-1$.

If $\mathcal Z_C=\emptyset$, the deletion assertion in
Lemma~\ref{lem:section-cores} forces $|C|=k-1$. Indeed, otherwise
one can delete a point outside $C$ from a member of the $1$-section;
the resulting double-section member still omits a point of $D_1$,
since $|C\cup D|\geq k$. Thus the $1$-section is $\{C\}$.
For any $d\in D_1$, every member except $\{1\}\cup C$ contains
$\{j,d\}$, which is disjoint from that exceptional member.
These give the required cores. The case $\mathcal Z_D=\emptyset$
is symmetric.

Assume now that $\mathcal Z_C$ and $\mathcal Z_D$ are both
nonempty, and set
\[
\widehat C_2=\bigcap_{Z\in\mathcal Z_C}Z,\qquad
D_2=\bigcap_{Z\in\mathcal Z_D}Z.
\]
If $|D|<k-1$, intersect the sets $E\setminus\{x\}$ with
$E\in\mathcal A(j,\overline1)$ and $x\in E\setminus D$.
They all belong to $\mathcal Z_D$, and their intersection is $D$;
hence $D_2\subseteq D$. Similarly,
$\widehat C_2\subseteq C$ if $|C|<k-1$.
If $c\in D_2\cap C_1$, it follows that the $j$-section is the
singleton $\{D\}$. All other members contain $\{1,c\}$, disjoint
from $\{j\}\cup D$, so we are again in the exceptional-member
case. Treat $\widehat C_2\cap D_1\neq\emptyset$ symmetrically.
We may therefore assume
\begin{equation}\label{eq:cross-core-exclusion}
D_2\cap C_1=\widehat C_2\cap D_1=\emptyset.
\end{equation}

We next prove $D_2\subseteq D$ when $|D|=k-1$.
If $x\in D_2\setminus D$, then $x\notin V$ by
\eqref{eq:cross-core-exclusion}. Every $c\in C_1$ precedes $x$:
otherwise the fixed $(x,c)$-shift within $U\setminus D$, applied
to a member of the $1$-section omitting $x$, would remove the
compulsory point $c$.
Choose $Z\in\mathcal Z_D$. It contains $x$ and omits at least one
point of $C_1$. It cannot omit two, since replacing $x$ by either
one using a fixed shift would leave a member of $\mathcal Z_D$
omitting $x$. Thus it omits exactly one, say $c$, and
$Z'=(Z\setminus\{x\})\cup\{c\}\in\mathcal Z_0$.
If $z\in I\setminus Z$, the pair $\{x,z\}$ is avoided by
$\{1,j\}\cup Z'$ and hence has a fixed shift. For $z<x$, its
application to $Z$ gives a member of $\mathcal Z_D$ omitting $x$.
For $x<z$, its application to the unique member $D$ of the
$j$-section changes that section. Both are impossible. Thus
$I\subseteq Z$, and $Z'$ contains $V$, contrary to
$|Z'|=k-2<|V|$. The symmetric argument gives
\begin{equation}\label{eq:aux-core-containment}
C_1\subseteq\widehat C_2\subseteq C,\qquad
D_1\subseteq D_2\subseteq D.
\end{equation}

We claim that $\widehat C_2\cup D_2=V$.
If $x\in I\setminus(\widehat C_2\cup D_2)$, choose
$Z_C\in\mathcal Z_C$ and $Z_D\in\mathcal Z_D$ omitting $x$,
and $d\in D_1\setminus Z_C$.
The pair $\{x,d\}$ is avoided by $\{1,j\}\cup Z_C$, so its
shift is fixed. If $x<d$, applying it to $Z_D$ produces a member
containing neither $C_1$ nor $D_1$, a contradiction. If $d<x$,
apply it to a member of the $1$-section omitting $d$; the result
omits the compulsory point $x$. This proves the claim.

Put $J=\widehat C_2\cap D_2$. First suppose $J=\emptyset$.
Every $Z\in\mathcal Z_0$ contains $\widehat C_2$ or $D_2$.
Indeed, otherwise choose $x\in\widehat C_2\setminus Z$ and
$y\in D_2\setminus Z$. Then $y\in I\setminus\widehat C_2$;
choose $Z_C\in\mathcal Z_C$ omitting $y$ and
$d\in D_1\setminus Z_C$. The pair $\{y,d\}$ is not critical.
If $d<y$, its fixed shift applied to a member of the $1$-section
omitting $d$ removes the compulsory point $y$. If $y<d$, its
application to $Z$ gives a member of $\mathcal Z_C$ still omitting
$x$. Both are contradictions. Therefore
$R=\{1\}\cup\widehat C_2,\  S=\{j\}\cup D_2$
are disjoint cores covering all members, and
$|R|+|S|=|V|+2\geq k+2$.

Finally suppose $J\neq\emptyset$. We show that any
$Z\in\mathcal Z_0$ omitting a point $x\in J$ contains $V\setminus J$.
If $z\in(\widehat C_2\setminus D_2)\setminus Z$, then $z\in I$.
Choose $Z_D\in\mathcal Z_D$ omitting $z$ and
$c\in C_1\setminus Z_D$. The pair $\{z,c\}$ is not critical.
If $c<z$, its fixed shift applied to a member of the $j$-section
omitting $c$ removes the compulsory point $z$.
If $z<c$, its application to $Z$ produces a member of
$\mathcal Z_D$ still omitting $x\in D_2$. Both are impossible.
Interchanging the two roles proves $D_2\setminus\widehat C_2\subseteq Z$
as well. Hence every member contains one of the disjoint sets
$R=J,\  S=\{1,j\}\cup(V\setminus J)$.
Nontriviality supplies a member not containing $J$, so $|S|\leq k$.
Since $|R|+|S|=|V|+2\geq k+2$, both core sizes are at least two.
In either case, no $k$-set contains both cores, and neither part is
empty, since otherwise its core would be common to all members.
\end{proof}

\begin{corollary}\label{cor:critical-size}
If $n\geq2k$ and $k\geq3$, every nontrivial critically shifted
$\mathcal A\subseteq\binom{[n]}k$ satisfies
$|\mathcal A|\leq\binom{n-2}{k-2}+1$.
Unless it has a two-element core and one exceptional member disjoint
from that core, the inequality is strict.
\end{corollary}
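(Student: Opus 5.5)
Apply Lemma~\ref{lem:two-cores} to $\mathcal A$. It gives disjoint cores $R,S$ with $2\leq|R|,|S|\leq k$ and $|R|+|S|\geq k+2$. Every member of $\mathcal A$ contains $R$ or $S$, and no $k$-set contains both. Write $\mathcal A=\mathcal A_R\,\dot\cup\,\mathcal A_S$ accordingly; both parts are nonempty. Put $a=|R|$ and $b=|S|$, and assume $a\leq b$ by symmetry.

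The key observation is that the two parts constrain each other. Every member of $\mathcal A_S$ contains $S$, and every member of $\mathcal A_R$ contains $R$. We use only the trivial bounds $|\mathcal A_R|\leq\binom{n-a}{k-a}$ and $|\mathcal A_S|\leq\binom{n-b}{k-b}$. Since $a,b\geq2$, the sum is at most $2\binom{n-2}{k-2}$, which is not good enough. So we must use more structure. Here the criticality from Lemma~\ref{lem:critical-sections} enters: the family is nontrivial, and by the construction in the proof of Lemma~\ref{lem:two-cores} the cores arise from the sections at the critical pair $\{1,j\}$.

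The cleaner route is a case split on $b$.

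\emph{Case $b=k$.} Then $\mathcal A_S=\{S\}$ is a single exceptional member disjoint from $R$. We have $|\mathcal A_R|\leq\binom{n-a}{k-a}\leq\binom{n-2}{k-2}$, with equality only if $a=2$ and $\mathcal A_R$ is the full star of $R$. This gives the bound together with the stated equality case.

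\emph{Case $b\leq k-1$.} Then $a\leq b\leq k-1$ and $a+b\geq k+2$, so $a\geq3$. Hence
\[
|\mathcal A|\leq\binom{n-a}{k-a}+\binom{n-b}{k-b}\leq2\binom{n-3}{k-3}.
\]
For $n\geq2k$ we have $\binom{n-2}{k-2}=\binom{n-3}{k-3}\frac{n-2}{k-2}\geq\binom{n-3}{k-3}\cdot2$. Indeed $n-2\geq2k-2\geq2(k-2)$, and the inequality is strict since $2k-2>2k-4$. Thus $|\mathcal A|<\binom{n-2}{k-2}$, strictly, and the bound holds with strict inequality.

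The one subtlety is the edge of the second case. If $k-b$ or $k-a$ were zero, that part would be a single set, and this has already been excluded. When $k=3$, the second case would need $a\geq3>b$, which is impossible; so for $k=3$ only the first case occurs.

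The main (mild) obstacle is the equality statement in the first case. Equality forces $a=2$ and $\mathcal A_R=\{A:R\subseteq A\}$. In that situation $\mathcal A$ has a two-element core $R$ and one exceptional member $S$ disjoint from $R$, exactly as claimed. Otherwise either $a\geq3$, giving $\binom{n-a}{k-a}<\binom{n-2}{k-2}$ because $n>k$, or $\mathcal A_R$ is a proper subfamily; in both cases the inequality is strict.
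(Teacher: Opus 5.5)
Your proof is correct and is essentially the paper's argument: after Lemma~\ref{lem:two-cores}, both bound each core part by the full star of its core. The only difference is in the numerical comparison. The paper first shrinks the cores to total size $k+2$ and uses convexity of $t\mapsto\binom{n-t}{k-t}$, whereas your split on whether the larger core has size $k$ skips the shrinking and needs only the cruder estimate $2\binom{n-3}{k-3}<\binom{n-2}{k-2}$. Your second paragraph's appeal to further criticality structure is unnecessary: the only input beyond the star bounds is $|R|+|S|\geq k+2$, and your case split already uses it.
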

\begin{proof}
Shrink the disjoint cores in Lemma~\ref{lem:two-cores}, if necessary,
so that their sizes sum to $k+2$. Write
$g(t)=\binom{n-t}{k-t}$. The differences
$g(t)-g(t+1)=\binom{n-t-1}{k-t}$ are positive and strictly decrease
for $2\leq t<k$. Thus $g(r)+g(k+2-r)$ is maximized at $r=2$ or
$r=k$, with value $\binom{n-2}{k-2}+1$.
If both sizes are at least three the comparison is strict; if one
is two the other is $k$ and its part is a single member.
\end{proof}

\subsection{The density inequality and the one-exception case}

\begin{lemma}\label{lem:density}
Let $m,a,c$ be positive integers with $m>a+c$. If
$\mathcal H\subseteq\binom{[m]}a$ and
$\mathcal G\subseteq\binom{[m]}c$ are cross-intersecting, put
\[
u=\frac{|\mathcal H|}{\binom ma},\qquad
v=\frac{|\mathcal G|}{\binom mc},\qquad
L=\frac{ac}{m(m-a-c)}.
\]
Then $u+v+uv/L\leq1$.
\end{lemma}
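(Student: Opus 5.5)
We use a spectral (Hoffman-type) argument on the bipartite Kneser graph between $\binom{[m]}a$ and $\binom{[m]}c$, in the form of a weighted expander mixing lemma. Write $\mathbf 1_{\mathcal H}$ and $\mathbf 1_{\mathcal G}$ for the indicator vectors. Let $K$ be the disjointness operator $K:\mathbb R^{\binom{[m]}c}\to\mathbb R^{\binom{[m]}a}$, with $(Kg)(H)=\sum_{G\cap H=\emptyset}g(G)$. Cross-intersection says exactly that $\langle \mathbf 1_{\mathcal H},K\mathbf 1_{\mathcal G}\rangle=0$.

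The operator $K$ is equivariant under $S_m$, so it maps the isotypic pieces $V_i$ ($0\le i\le\min\{a,c\}$) of the Johnson schemes on the two sides to one another. On each piece it acts as a scalar multiple $\kappa_i$ of an isometry (after normalization), and the $\kappa_i$ are known Eberlein-type values. Concretely:
\begin{itemize}
\item $\kappa_0=\binom{m-a}{c}$ corresponds to the constant functions.
\item $|\kappa_1|=\binom{m-a-1}{c-1}$ up to the normalizing factor $\sqrt{\binom ma/\binom mc}$.
\item $|\kappa_i|$ decreases in $i$ when $m>a+c$.
\end{itemize}
I would verify the monotonicity by the ratio $|\kappa_{i+1}/\kappa_i|$, which is a product of factors smaller than one. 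I expect this to be the main technical step.

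Decompose $\mathbf 1_{\mathcal H}=u\mathbf 1+f$ and $\mathbf 1_{\mathcal G}=v\mathbf 1+g$, with $f,g$ orthogonal to the constants. Then
\[
0=uv\binom ma\binom{m-a}c+\langle f,Kg\rangle ,
\]
so by Cauchy--Schwarz
\[
uv\binom ma\binom{m-a}c\le \lambda\|f\|\|g\|,
\]
where $\lambda$ is the largest normalized nontrivial singular value. We also have $\|f\|^2=\binom ma u(1-u)$ and $\|g\|^2=\binom mc v(1-v)$. Putting these together gives
\[
uv\le \mu\sqrt{u(1-u)v(1-v)},
\qquad
\mu=\frac{\lambda\sqrt{\binom ma\binom mc}}{\binom ma\binom{m-a}c}.
\]
Computing $\mu$ from $\kappa_1$ gives
\[
\mu^2=\frac{\binom{m-a-1}{c-1}^2\binom ma\binom mc}{\binom ma^2\binom{m-a}c^2}\cdot(\text{normalization}),
\]
which should simplify to
\[
\mu^2=\frac{ac}{(m-a)(m-c)}.
\]

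Squaring, $uv\le\mu^2(1-u)(1-v)$. Since $\mu^2/(1-\mu^2)=ac/(m(m-a-c))=L$, this rearranges to $uv(1-\mu^2)\le\mu^2(1-u-v)$, that is, $uv/L\le 1-u-v$. This is the claim; if $\mathcal H$ or $\mathcal G$ is empty it is trivial.

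The main obstacle is computing the singular value on $V_1$ correctly, together with its normalization between the two Johnson schemes of different uniformity, and proving that it dominates the higher $V_i$. For the $V_1$ value I would use the test function $x\mapsto \mathbf 1[1\in x]-a/m$ and compute $K$ on it directly. If the monotonicity of the higher eigenvalues resists a clean ratio argument, I would instead bound $\sum_{i\ge2}$ using the fact that $K^{\!\top}K$ is a nonnegative combination of Johnson-scheme idempotents with a known trace.
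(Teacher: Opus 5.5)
Your proposal is essentially the paper's proof: a Hoffman-type bound for the disjointness operator between the two Johnson layers, in which the singular value on $V_1$ dominates because the successive ratios $\frac{(a-i)(c-i)}{(m-a-i)(m-c-i)}$ are below one, followed by the same rearrangement via $\mu^2/(1-\mu^2)=L$. The paper works with normalized averaging operators and derives $\theta_j=\frac{(a)_j(c)_j}{(m-a)_j(m-c)_j}$ for $TT^*$ directly from a filtration, rather than citing Eberlein values. One correction: the unnormalized singular value on $V_1$ is $\sqrt{\binom{m-a-1}{c-1}\binom{m-c-1}{a-1}}$, that is, $\binom{m-a-1}{c-1}$ times $\sqrt{a(m-a)\binom ma/(c(m-c)\binom mc)}$ rather than times $\sqrt{\binom ma/\binom mc}$; with this value, your test-function computation gives exactly $\mu^2=ac/((m-a)(m-c))$.
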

\begin{proof}
We include the spectral calculation. Interchanging the families if
necessary, assume $a\leq c$. Equip each uniform layer with the
uniform probability inner product. Define
\[
(Tf)(A)=\binom{m-a}{c}^{-1}
\sum_{\substack{C\in\binom{[m]}c\\C\cap A=\varnothing}}f(C).
\]
Its adjoint averages over the $a$-sets disjoint from a given $c$-set.
For $J\subseteq[m]$, let $f_J^{(b)}(E)=\mathbf1_{J\subseteq E}$
on the $b$-uniform layer. Write $(x)_j=x(x-1)\cdots(x-j+1)$.
For $|J|=j\leq a$, direct counting gives
\[
Tf_J^{(c)}=\frac{(c)_j}{(m-a)_j}
 \sum_{I\subseteq J}(-1)^{|I|}f_I^{(a)},\qquad
T^*f_J^{(a)}=\frac{(a)_j}{(m-c)_j}
 \sum_{I\subseteq J}(-1)^{|I|}f_I^{(c)}.
\]
Let $V_j$ be the span of $f_I^{(a)}$ with $|I|\leq j$.
Then $V_a$ is the entire function space, $TT^*$ preserves $V_j$,
and its action on $V_j/V_{j-1}$ is scalar multiplication by
$\theta_j=\frac{(a)_j(c)_j}{(m-a)_j(m-c)_j}$.
By self-adjointness the same holds on $V_j\cap V_{j-1}^{\perp}$.
The successive factors are less than one, so
$\theta_j\leq\theta_1$ for $j\geq1$. Consequently the norm of
$T$ between the mean-zero subspaces is at most
$\rho=\sqrt{\frac{ac}{(m-a)(m-c)}}$.
For $f=\mathbf1_{\mathcal H}-u$ and
$g=\mathbf1_{\mathcal G}-v$, cross-intersection gives
$\langle f,Tg\rangle=-uv$. Hence
$uv\leq\rho\sqrt{u(1-u)v(1-v)}$.
If $uv>0$, squaring and cancelling $uv$ yields
$uv\leq\rho^2(1-u)(1-v)$. Since $\rho^2=L/(1+L)$,
this is the desired inequality. If $uv=0$, the conclusion follows
from $0\leq u,v\leq1$.
\end{proof}

\begin{lemma}[One exceptional member]\label{asd}
Let $a,b\geq3$, $n\geq2\max\{a,b\}$, and $n>a+b$.
Let $R\in\binom{[n]}2$, $X\in\binom{[n]\setminus R}a$, and
\[
\mathcal F=\{X\}\cup\{R\cup H:H\in\mathcal H\},\qquad
\varnothing\ne\mathcal H\subseteq\binom{[n]\setminus R}{a-2}.
\]
Every family $\mathcal G\subseteq\binom{[n]}b$ cross-intersecting
with $\mathcal F$ satisfies
\[
|\mathcal F||\mathcal G|
\leq\left(\binom{n-2}{a-2}+1\right)Q(n,a,b).
\]
Equality holds precisely when $\mathcal H=\binom{[n]\setminus R}{a-2}$
and $\mathcal G$ consists of all $b$-sets meeting both $R$ and $X$.
\end{lemma}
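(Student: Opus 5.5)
We may assume $\mathcal G=\overline N_b(\mathcal F)$, since enlarging $\mathcal G$ only increases the product. Every $G\in\mathcal G$ meets $X$. Put $h=|\mathcal H|$ and write $U=[n]\setminus R$, so $|U|=n-2$.

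We split $\mathcal G$ according to $G\cap R$.
\begin{itemize}
\item[(a)] Members meeting $R$ automatically meet every $R\cup H$, so they are constrained only by meeting $X$. Their number is exactly $Q(n,a,b)$: count $b$-sets meeting $\{r_1\}$ and $X$, plus those avoiding $r_1$, containing $r_2$ and meeting $X$. These give the two summands $\binom{n-i}{b-1}-\binom{n-a-i}{b-1}$.
\item[(b)] Members avoiding $R$ form a family $\mathcal G_0\subseteq\binom{U}{b}$. This family is cross-intersecting with $\mathcal H\subseteq\binom U{a-2}$, and each of its members meets $X$.
\end{itemize}
Hence
\[
|\mathcal F||\mathcal G|\le (h+1)\bigl(Q+|\mathcal G_0|\bigr).
\]
The target is $(\binom{n-2}{a-2}+1)Q$, so we must show
\[
(h+1)|\mathcal G_0|\le \Bigl(\tbinom{n-2}{a-2}-h\Bigr)Q.
\]
Here $|\mathcal G_0|=|\overline N_b^{U}(\mathcal H)|$, and strictness is needed when $\mathcal G_0\ne\emptyset$.

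The natural tool is Lemma~\ref{lem:density}, with $m=n-2$, $a\to a-2$ and $c=b$. This requires $n-2>a-2+b$, which holds since $n>a+b$. Set $u=h/\binom{n-2}{a-2}$ and $v=|\mathcal G_0|/\binom{n-2}{b}$. The lemma gives
\[
v\le \frac{L(1-u)}{L+u},\qquad L=\frac{(a-2)b}{(n-2)(n-a-b)}.
\]
It therefore suffices to prove
\[
(h+1)\tbinom{n-2}{b}\frac{L(1-u)}{L+u}\le \tbinom{n-2}{a-2}(1-u)Q.
\]
After cancelling $1-u$ (the case $u=1$ forces $\mathcal G_0=\emptyset$), this becomes
\[
\frac{h+1}{L+u}\,L\tbinom{n-2}{b}\le \tbinom{n-2}{a-2}Q.
\]
Since $h+1\le(u+\binom{n-2}{a-2}^{-1})\binom{n-2}{a-2}$, the ratio $(h+1)/(L+u)$ is bounded by $\binom{n-2}{a-2}\max\{1,\binom{n-2}{a-2}^{-1}/L\}$.

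So we need two estimates:
\begin{itemize}
\item[(1)] $L\binom{n-2}{b}\le Q$;
\item[(2)] $\binom{n-2}{b}\le Q\binom{n-2}{a-2}$, which covers the branch where $u$ is small.
\end{itemize}
Estimate (2) is easy because $Q\ge\binom{n-2}{b-1}$-type terms are large. Estimate (1) is the main numerical obstacle. We have
\[
L\tbinom{n-2}{b}=\frac{(a-2)}{(n-a-b)}\cdot\frac{1}{n-2}\,b\tbinom{n-2}{b},
\qquad b\tbinom{n-2}{b}=(n-1-b)\tbinom{n-2}{b-1}.
\]
Meanwhile $Q\ge\binom{n-1}{b-1}-\binom{n-a-1}{b-1}\ge a\binom{n-a-1}{b-2}$. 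We would compare the two via ratios of binomials, using $n\ge 2\max\{a,b\}$. The delicate regime is $n$ close to $a+b+1$, where $L$ is large. If the density bound is too weak there, the fallback is to use the trivial bound that $\mathcal G_0$ must meet $X$, together with $\mathcal G_0\subseteq\overline N_b^U(\mathcal H)$, via Theorem~\ref{cokk}. Nonemptiness of $\mathcal H$ removes from $\mathcal G_0$ at least $\binom{n-a}{b}$ of the sets of $\binom Ub$ that meet $X$, and for $h$ small we compare directly.

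For equality, first note that (1) and (2) hold strictly, so the density route gives strict inequality whenever $\mathcal G_0\neq\emptyset$. Hence equality forces $\mathcal G_0=\emptyset$. It also forces equality in (a) and $h=\binom{n-2}{a-2}$. Then $\mathcal G$ consists exactly of the $b$-sets meeting $R$ and $X$. Conversely, this pair is cross-intersecting and attains the bound.
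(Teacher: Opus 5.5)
Your decomposition and your use of Lemma~\ref{lem:density} are the same as the paper's: you split off the at most $Q$ members of $\mathcal G$ meeting $R$ and treat the family $\mathcal G_0$ on $U$ separately. With $D=\binom{n-2}{a-2}$ and $E=\binom{n-2}{b}$, your reduction to the per-$h$ requirement $\frac{(h+1)EL}{DL+h}\le Q$ is correct.

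The gap is the next step. You bound $\frac{u+1/D}{u+L}$ by $\max\{1,1/(DL)\}$, which is its supremum over all $u\ge0$. This discards the constraint $1/D\le u\le1$, that is, $1\le h\le D$, and both resulting estimates are false in parts of the admissible range.
\begin{itemize}
\item Estimate (1), $LE\le Q$, fails for $a=b=8$, $n=17$. There $L=16/5$ and $E=\binom{15}{8}=6435$, so $LE=20592$, whereas $Q=(\binom{16}{7}-8)+(\binom{15}{7}-1)=17866$.
\item Estimate (2), $E\le DQ$, fails for $a=b=3$ and every $n\ge40$, where $Q=6n-21$. At $n=50$ one has $E=\binom{48}{3}=17296>13392=48\cdot279=DQ$. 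In general the left side grows cubically in $n$ and the right side only quadratically, so (2) is not ``easy''.
\end{itemize}
In each example you are in the branch that needs the failing estimate: $DL\gg1$ in the first, and $DL=3/(n-6)<1$ in the second. So near $n=a+b+1$ it is not the density bound that is too weak but your relaxation of it. The sketched fallback through Theorem~\ref{cokk} is not an argument. Your equality analysis inherits the gap, since it rests on (1) and (2) being strict.

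The repair is to keep $1\le h\le D$ and use that $(h+1)/(DL+h)$ is monotone in $h$.
\begin{itemize}
\item If $DL\ge1$, the ratio is nondecreasing, so its maximum is at $h=D$. You then need $E(1+1/D)\rho^2<Q$ with $\rho^2=L/(1+L)$. The paper obtains this from $Q\ge2\binom{n-2}{b-1}P$, where $P\ge\frac{a(2n-a-5)}{(n-2)(n-3)}$ bounds the proportion of $(b-1)$-subsets of $U$ meeting $X$. This gives $Q/(E\rho^2)>3/2>6/5\ge1+1/D$.
\item If $DL<1$, one checks that $a=3$ and that the maximum is at $h=1$, with value $\frac{2}{n-3}\binom{n-3}{b-1}$. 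This is below $Q\ge6\binom{n-5}{b-2}$ by $n\ge2b$.
\end{itemize}
With these two strict inequalities, $h<D$ gives $(h+1)|\mathcal G_0|<(D-h)Q$ and hence a strict bound. The case $h=D$ forces $\mathcal G_0=\emptyset$, and your equality conclusion then goes through.
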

\begin{proof}
Put $D=\binom{n-2}{a-2}$, $E=\binom{n-2}b$,
$h=|\mathcal H|$, and $Q=Q(n,a,b)$.
The members of $\mathcal G$ meeting $R$ must also meet $X$;
there are at most $Q$ of them. The remaining family
$\mathcal Z=\{G\in\mathcal G:G\cap R=\varnothing\}$ is
cross-intersecting with $\mathcal H$ on $[n]\setminus R$.
Writing $z=|\mathcal Z|$, we have
\begin{equation}\label{eq:one-basic}
|\mathcal F||\mathcal G|\leq(h+1)(Q+z).
\end{equation}
Set $\beta=Q/E$ and
\[
L=\frac{(a-2)b}{(n-2)(n-a-b)},\qquad
\rho^2=\frac{(a-2)b}{(n-a)(n-b-2)}=\frac L{1+L}.
\]
We first prove
\begin{equation}\label{eq:one-strict}
\beta>(1+1/D)\rho^2.
\end{equation}
Let $P$ be the proportion of $(b-1)$-sets of $[n]\setminus R$
meeting $X$. Sampling a uniform two-element subset of such a set
gives
\[
P\geq1-\frac{\binom{n-a-2}2}{\binom{n-2}2}
=\frac{a(2n-a-5)}{(n-2)(n-3)}.
\]
Counting sets containing exactly one point of $R$ gives
$Q\geq2\binom{n-2}{b-1}P$, and therefore
\[
\frac\beta{\rho^2}\geq
\frac{2a(2n-a-5)(n-a)(n-b-2)}
{(a-2)(n-2)(n-3)(n-b-1)}.
\]
Since $a,b\leq n/2$, the four factors satisfy
\[
\frac a{a-2}\geq\frac n{n-4},\quad
2n-a-5\geq\frac{3n-10}2,\quad
n-a\geq\frac n2,\quad
\frac{n-b-2}{n-b-1}\geq\frac{n-4}{n-2}.
\]
It follows that
\[
\frac\beta{\rho^2}\geq
\frac{n^2(3n-10)}{2(n-2)^2(n-3)}>\frac32.
\]
Here $n\geq7$ and $D\geq n-2$, so $1+1/D\leq6/5$,
proving \eqref{eq:one-strict}.

Lemma~\ref{lem:density} applied to $\mathcal H,\mathcal Z$ yields
\begin{equation}\label{eq:one-z}
z\leq\frac{EL(D-h)}{DL+h}.
\end{equation}
We claim that
\begin{equation}\label{eq:one-ratio}
\frac{EL(h+1)}{DL+h}<Q\qquad(1\leq h\leq D).
\end{equation}
If $DL\geq1$, the ratio $(h+1)/(DL+h)$ is nondecreasing in $h$;
its value at $D$ gives
$\frac{EL(h+1)}{DL+h}\leq E(1+1/D)\rho^2<Q$.
If $DL<1$, necessarily $a=3$: otherwise
$2\leq a-2\leq(n-4)/2$, $D\geq\binom{n-2}2$, and
$DL\geq\frac{(n-3)(a-2)b}{2(n-a-b)}>\frac{(a-2)b}2\geq3$.
Thus $a=3$, $D=n-2$, and the ratio in \eqref{eq:one-ratio}
is maximized at $h=1$. Its value is
\[
\frac{2EL}{DL+1}=\frac{2Eb}{(n-2)(n-3)}
=\frac2{n-3}\binom{n-3}{b-1}.
\]
On the other hand,
\[
Q\geq2\left(\binom{n-2}{b-1}-\binom{n-5}{b-1}\right)
\geq6\binom{n-5}{b-2},
\]
and the ratio of the preceding upper bound to this lower bound is
$\frac{n-4}{3(b-1)(n-b-2)}<1$
because $n\geq2b$ and $b\geq3$. This proves
\eqref{eq:one-ratio} in both cases.
If $h<D$, equations \eqref{eq:one-z}--\eqref{eq:one-ratio} give
$(h+1)z<(D-h)Q$, so \eqref{eq:one-basic} is strictly less than
$(D+1)Q$. If $h=D$, cross-intersection forces $z=0$, since
$n>a+b$. Equality then requires all $b$-sets meeting $R$ and $X$
to belong to $\mathcal G$. The converse is immediate.
\end{proof}

\begin{proposition}\label{prop:two-point-construction}
Let $n\geq2\max\{k,\ell\}$, $n>k+\ell$, and $k,\ell\geq3$.
For $R=\{1,2\}$ and $X=[3,k+2]$, the family
$\mathcal C_k(R,X)$ is nontrivial and critically shifted.
Its full cross-intersecting mate is $\mathcal D_\ell(R,X)$, which
is nontrivial, and their product is $F(n,k,\ell)$.
\end{proposition}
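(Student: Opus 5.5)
The proof is a sequence of direct verifications with $R=\{1,2\}$ and $X=[3,k+2]$. I would check four things in order: nontriviality of $\mathcal C_k(R,X)$, its critical shiftedness, the identification of its full mate, and the product count.

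\emph{Nontriviality.} The members containing $R$ have empty common intersection outside $R$, because $n-2\geq k-1$ lets us choose two such members whose parts outside $R$ are disjoint or at least share no common point. The exceptional member $X$ avoids $R$. Hence $\bigcap\mathcal C_k(R,X)=\emptyset$.

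\emph{Not shifted.} The shift $s_{13}$ sends $X$ to $\{1\}\cup[4,k+2]$. This set is not in the family, since it lacks $2$. So $s_{13}$ moves the family, and it is not shifted.

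\emph{Critical shiftedness.} I would sort the pairs $i<j$ by how they meet $R$ and $X$.
\begin{itemize}
\item If $i\in R$ and $j\in X$, then $\mathcal C_k(R,X)(\overline{ij})=\emptyset$. Indeed every member contains $R$ or equals $X$, so every member contains $i$ or $j$. Also, the sections at $i$ avoiding $j$ consist of sets containing $R\setminus\{i\}$, while the section at $j$ avoiding $i$ is only $X\setminus\{j\}$, which avoids $R$. These two sections are disjoint, so Lemma~\ref{lem:critical-sections} says the shift makes the family trivial.
\item If $i,j\in R$, the shift fixes the family, since every member contains both points or neither.
\item If $i,j\in X$, every member contains both points or neither, so again the family is fixed.
\item If $j\notin R\cup X$ and $i<j$, the star part is closed under shifting toward smaller labels outside $R$. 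The only other member, $X$, does not contain $j$, so it is untouched. The family is fixed.
\item If $i\notin R\cup X$, then $i>k+2>j$ is impossible for $i<j$ with $j\in R\cup X$.
\end{itemize}
These cases exhaust all pairs.

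\emph{Full mate.} An $\ell$-set $B$ meeting every member must meet $X$. It must also meet every $k$-set containing $R$. If $B\cap R=\emptyset$, we can choose a $k$-set containing $R$ that is disjoint from $B$, because $n-2-\ell\geq k-2$ follows from $n>k+\ell$. So $B$ must meet $R$ as well. Conversely, any $B$ meeting both $R$ and $X$ meets every member. This identifies the mate as $\mathcal D_\ell(R,X)$.

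\emph{Nontriviality of the mate.} For any point $x$, I would exhibit a member of $\mathcal D_\ell(R,X)$ avoiding it. This uses $\ell\geq3$ and $k\geq3$: take one point of $R\setminus\{x\}$, one point of $X\setminus\{x\}$, and fill the remaining places outside $x$.

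\emph{Counting.} The first family has $|\mathcal C_k(R,X)|=\binom{n-2}{k-2}+1$. For the mate, count by the smallest element of $B\cap R$. Sets containing $1$ and meeting $X$ number $\binom{n-1}{\ell-1}-\binom{n-k-1}{\ell-1}$. Sets omitting $1$, containing $2$ and meeting $X$ number $\binom{n-2}{\ell-1}-\binom{n-k-2}{\ell-1}$. The sum is $Q(n,k,\ell)$, so the product is $F(n,k,\ell)$.

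The only delicate step is the case analysis for critical shiftedness. Specifically, one must confirm that no shift between a point of $X$ and a point outside $R\cup X$ can leave the family unchanged while breaking its structure. I expect this to be routine, since the family is symmetric under permutations of $R$, of $X$, and of the remaining points.
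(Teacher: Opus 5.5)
Your proposal follows the paper's proof step for step. Nontriviality comes from the star having intersection exactly $R$ while $X\cap R=\emptyset$. The full mate is identified by extending $R$ to a $k$-set disjoint from any $\ell$-set avoiding $R$, using $n>k+\ell$. The mate is nontrivial because one can pick a point of $R$ and a point of $X$ avoiding any given $x$, and $Q(n,k,\ell)$ is counted by splitting on whether $1\in B$. Your case analysis for critical shiftedness is an explicit version of the paper's one-line remark: every shift either fixes the family or moves $X$ by replacing one of its points with $1$ or $2$.

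One justification is wrong, however. In the case $i,j\in X$ it is false that every member contains both points or neither. For $i=3$ and $j=4$, the star member $\{1,2,3\}\cup H$ with $H\subseteq[5,n]$ contains $i$ but not $j$. The conclusion still holds, for the reason you give in the next case. When $i,j\notin R$, the image $(A\setminus\{j\})\cup\{i\}$ of a star member still contains $R$, so it already lies in the full $R$-star and $s_{ij}$ leaves $A$ in place. Also $X$ contains both points, so it is fixed.

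Your list also omits the pair type $i\in X$, $j\in R$. This case is vacuous, since every label in $X$ exceeds every label in $R$. With these two repairs the argument is complete.
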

\begin{proof}
The full $R$-star has intersection $R$, and $X\cap R=\varnothing$,
so $\mathcal C_k(R,X)$ is nontrivial. The shift $s_{13}$ makes
it trivial. Every shift either fixes the family, or moves
$X$ by replacing one of its points with $1$ or $2$, leaving a
family with a common point. Thus it is critically shifted.
A set meeting both $R$ and $X$ meets every member. Conversely,
an $\ell$-set avoiding $R$ has a disjoint member of the full
$R$-star because $n-\ell-2\geq k-2$, and a set avoiding $X$
is disjoint from the exceptional member. This proves the assertion
about the full mate. The latter has no common point: for any given
point one can choose a different point of $R$, a point of $X$
different from it, and complete these to an $\ell$-set avoiding it.
Finally, separating sets containing $1$ from those containing $2$
but not $1$ gives $|\mathcal D_\ell(R,X)|=Q(n,k,\ell)$.
\end{proof}

\begin{lemma}[Comparison in the other orientation]\label{lem:orientation}
If $n\geq2\ell\geq2k\geq6$ and $n>k+\ell$, then
$F(n,k,\ell)<M_1(n,k,\ell)$.
\end{lemma}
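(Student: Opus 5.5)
The plan is a direct numerical comparison. I would split both products so that the common leading part cancels and then compare two ratios. Write
\[
D=\binom{n-2}{k-2},\qquad
S=\binom{n-1}{k-1}-\binom{n-\ell-1}{k-1},\qquad
T=\binom{n-1}{\ell-1}-\binom{n-k-1}{\ell-1},
\]
and put $T'=\binom{n-2}{\ell-1}-\binom{n-k-2}{\ell-1}$. Then $M_1(n,k,\ell)=(S+1)(T+1)$ and $F(n,k,\ell)=(D+1)(T+T')$, so
\[
M_1-F=(S-D)(T+1)-(D+1)(T'-1).
\]
It therefore suffices to prove
\begin{equation}\label{eq:orient-plan}
\frac{S-D}{D+1}>\frac{T'-1}{T+1}.
\end{equation}

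To control the right-hand side I would use Pascal expansions, $T=\sum_{i=1}^{k}\binom{n-1-i}{\ell-2}$ and $T'=\sum_{i=2}^{k+1}\binom{n-1-i}{\ell-2}$. The summands of $T'$ are those of $T$ shifted by one index. The termwise ratio is
\[
\binom{n-2-i}{\ell-2}\Big/\binom{n-1-i}{\ell-2}=\frac{n-\ell-i+1}{n-1-i},
\]
which decreases in $i$, so it is at most $(n-\ell)/(n-2)$. Hence $T'\le\frac{n-\ell}{n-2}\,T$, and the right-hand side of \eqref{eq:orient-plan} is strictly less than $(n-\ell)/(n-2)$.

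For the left-hand side I would expand $S=\sum_{i=1}^{\ell}\binom{n-1-i}{k-2}$. Its first summand is $D$, so $S-D\ge\binom{n-3}{k-2}+\binom{n-4}{k-2}$, using $\ell\ge3$. Dividing by $D$ gives
\[
\frac{S-D}{D}\ge\frac{n-k}{n-2}\Bigl(1+\frac{n-k-1}{n-3}\Bigr).
\]
Since $n\ge2k$, the bracket is at least about $3/2$, and $n-k\ge n-\ell$. So $(S-D)/D$ exceeds $(n-\ell)/(n-2)$ by a factor bounded away from $1$, roughly $3/2$. That margin should absorb the replacement of $D$ by $D+1$ in the denominator: $D\ge n-2$, so this costs a factor $1+1/D$ close to $1$.

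The main obstacle is keeping the constants honest in the smallest cases. These are $k=3$, where $D=n-2$ is small and the $+1$ terms matter, and $n$ near $2\ell$ with $\ell=k$, where the bracket is smallest. For $k=3$ I would compute $S-D$, $D$, $T$ and $T'$ explicitly and verify \eqref{eq:orient-plan} directly, possibly keeping further summands of $S-D$. If the general margin is too thin for a few small $(n,k,\ell)$, for instance $n=2\ell=2k=6$, which is excluded by $n>k+\ell$, so the first case is $n=7$, I would check those instances by direct evaluation. For general $k\ge4$ the crude bounds above, with the factor $1+1/D\le1+1/(n-2)$, should already give strict inequality.
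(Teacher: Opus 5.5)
Your proof is correct, and it takes a different route from the paper's. The paper bounds both summands of $Q(n,k,\ell)$ by $T$, giving $F\le 2(J+1)T$ with $J=\binom{n-2}{k-2}$. It then compensates for this crude step by proving $S\ge 2J+2$. That requires the $k$ summands $\binom{n-j}{k-2}$, $2\le j\le k+1$, of $S$ (using $\ell\ge k$), a monotonicity-in-$n$ argument for their ratios to $J$, and an explicit evaluation at $n=2k+1$.

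You instead keep the exact identity $M_1-F=(S-D)(T+1)-(D+1)(T'-1)$ and use the sharper termwise bound $T'\le\frac{n-\ell}{n-2}T$. With that bound, two summands of $S-D$ are enough.

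Your concern about the smallest cases is unfounded, and you should close the argument uniformly rather than with the hedged ``roughly $3/2$''. From $D\ge n-2$ you get $D\cdot\frac{n-k-1}{n-3}>n-k-1\ge1$, hence $\bigl(1+\frac{n-k-1}{n-3}\bigr)D\ge D+1$. Therefore
\[
\frac{S-D}{D+1}\ \ge\ \frac{n-k}{n-2}\ \ge\ \frac{n-\ell}{n-2}\ \ge\ \frac{T'}{T}\ >\ \frac{T'-1}{T+1}
\]
for every admissible $(n,k,\ell)$, including $k=3$ and $n=7$. No case split or direct evaluation is needed.

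Each approach buys something different. Yours proves the lemma more directly and with less numerical work. The paper's produces the intermediate estimate $S\ge 2J+2$ as a by-product. It reuses that estimate, with the uniformities interchanged as $T\ge 2K+2$, in Lemma~\ref{lem:last-endpoint-comparison}. If you adopted your route, that later lemma would need its own proof of this bound.
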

\begin{proof}
Put $J=\binom{n-2}{k-2}$ and
\[
S=\binom{n-1}{k-1}-\binom{n-\ell-1}{k-1},\qquad
T=\binom{n-1}{\ell-1}-\binom{n-k-1}{\ell-1}.
\]
We first show $S\geq2J+2$. Since $\ell\geq k$,
\[
\frac SJ\geq\sum_{j=2}^{k+1}\frac{\binom{n-j}{k-2}}{J}.
\]
Each summand is nondecreasing in $n$: it is the proportion of
$(k-2)$-sets avoiding a fixed $(j-2)$-set in an $(n-2)$-element
set. At $n=2k+1$, the corresponding difference satisfies
\[
\sum_{j=2}^{k+1}\binom{2k+1-j}{k-2}
-2\binom{2k-1}{k-2}
=\frac{2}{k-1}\binom{2k-1}{k-2}-k\geq2.
\]
For $k=3$ the last inequality is equality; for $k\geq4$, use
$\binom{2k-1}{k-2}\geq\binom{2k-1}2\geq(k-1)(k+2)/2$.
Monotonicity of the ratios, together with
$J\geq\binom{2k-1}{k-2}$, proves the claim.
The two summands in $Q(n,k,\ell)$ are nonincreasing in their
index, so $Q(n,k,\ell)\leq2T$. Consequently
\[
F(n,k,\ell)\leq2(J+1)T<(S+1)(T+1)=M_1(n,k,\ell).
\]
\end{proof}

\subsection{The estimate for two nonempty core parts}

We first settle the two smallest uniformities directly.
\begin{proposition}\label{prop:small-critical}
Theorem~\ref{thm:two-core-product}, including its equality characterization,
holds for $k=3$ and $k=4$.
\end{proposition}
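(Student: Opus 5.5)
For $k=3$ and $k=4$ I would reduce everything to Lemma~\ref{asd}, except for one configuration when $k=4$, which I would bound directly. Here I assume that Theorem~\ref{thm:two-core-product} concerns a $k$-uniform $\mathcal A$ with two disjoint cores $R,S$ as in Lemma~\ref{lem:two-cores}, both parts nonempty, and an $\ell$-uniform $\mathcal B$ cross-intersecting with $\mathcal A$. I also assume its conclusion is $|\mathcal A||\mathcal B|\leq F(n,k,\ell)$, with equality exactly for $\mathcal C_k(R,X),\mathcal D_\ell(R,X)$.

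\emph{Normalizing the cores.} As in the proof of Corollary~\ref{cor:critical-size}, I first shrink the cores so that $|R|+|S|=k+2$ with $2\leq|R|\leq|S|\leq k$. Shrinking only enlarges the two core stars, so every member of $\mathcal A$ still contains one of the cores. I must check that disjointness of the parts and the equality analysis survive this. The point is that no $k$-set contains two disjoint sets of total size $k+2$.

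\emph{The case $|R|=2$, $|S|=k$.} The $S$-part of $\mathcal A$ is the single member $X=S$, which is disjoint from $R$. Every other member contains $R$, so $\mathcal A=\{X\}\cup\{R\cup H:H\in\mathcal H\}$ with $\mathcal H\neq\emptyset$ because the $R$-part is nonempty. Lemma~\ref{asd} with $a=k$ and $b=\ell$ then gives $|\mathcal A||\mathcal B|\leq(\binom{n-2}{k-2}+1)Q(n,k,\ell)=F(n,k,\ell)$, together with the equality characterization. For $k=3$ this is the only possibility, since $\{2,3\}$ is the only admissible pair of sizes. So $k=3$ follows at once.

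\emph{The case $k=4$, $|R|=|S|=3$.} This is the remaining configuration. Write the parts as $R\cup\{x\}$ for $x\in P_R\subseteq[n]\setminus R$ and $S\cup\{y\}$ for $y\in P_S\subseteq[n]\setminus S$. Put $p=|P_R|$, $q=|P_S|$, so that $p,q\geq1$ and $|\mathcal A|\leq p+q\leq2(n-3)$.

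I would split $\mathcal B$ by how it meets $R$ and $S$.
\begin{itemize}
\item A member avoiding $R$ must contain $P_R$. Hence there are at most $\binom{n-3-p}{\ell-p}$ such members, and none if $p>\ell$. The same holds for members avoiding $S$, with $q$ in place of $p$.
\item The members meeting both $R$ and $S$ number at most $\binom n\ell-2\binom{n-3}\ell+\binom{n-6}\ell$.
\end{itemize}
This gives
\[
|\mathcal A||\mathcal B|\leq(p+q)\Bigl(\tbinom n\ell-2\tbinom{n-3}\ell+\tbinom{n-6}\ell+\tbinom{n-3-p}{\ell-p}+\tbinom{n-3-q}{\ell-q}\Bigr).
\]
I would then compare this bound with $F(n,4,\ell)=(\binom{n-2}2+1)Q(n,4,\ell)$.

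\emph{Expected difficulty: small $p$ or $q$.} The main obstacle is the regime where $p$ or $q$ is small, say $p=1$. Then the term $\binom{n-4}{\ell-1}$ is not negligible, and the factor $p+q$ is not small enough to absorb it crudely. In that regime I would use three facts.
\begin{itemize}
\item The factor in front is only $p+q\leq n-2$, while $\binom{n-2}2+1$ is about $n^2/2$.
\item By the standard lower bound $Q\geq2(\binom{n-2}{\ell-1}-\binom{n-6}{\ell-1})$, $Q$ is of order $8\binom{n-2}{\ell-2}$.
\item The members of $\mathcal B$ meeting both cores number at most about $9\binom{n-2}{\ell-2}$.
\end{itemize}
Write the resulting inequality as ratios of binomials in $\ell/n\leq1/2$. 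I expect it to need the hypothesis $n\geq2\ell$ and $n\geq9$, which holds since $n>k+\ell\geq8$, plus a short separate check at the smallest values of $n$. If the crude bound fails there, my first fallback is to keep $p+q$ exact and treat $\mathcal B(\overline R)$ by Lemma~\ref{lem:density} on $[n]\setminus R$.

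\emph{Equality.} In this configuration every inequality above is strict. So equality in the theorem forces the case $|R|=2$, and there Lemma~\ref{asd} identifies the extremal pair $\mathcal C_k(R,X),\mathcal D_\ell(R,X)$.
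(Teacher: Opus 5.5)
Your outline matches the paper's proof. You shrink the cores to total size $k+2$, settle the $(2,k)$ configuration (and hence all of $k=3$) by Lemma~\ref{asd}, and for $k=4$, $|R|=|S|=3$ you bound $|\mathcal B|\le W+f(p)+f(q)$ with $W=\binom n\ell-2\binom{n-3}\ell+\binom{n-6}\ell$ and $f(t)=\binom{n-3-t}{\ell-t}$. The gap is that you stop there. Comparing $(p+q)(W+f(p)+f(q))$ with $F(n,4,\ell)$ is the entire content of the $k=4$ case. Your proposal only predicts what that comparison should need, and offers a fallback in case it fails. Your claim that every inequality is strict in this configuration, on which your equality clause rests, is also unproved. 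As written, neither the bound nor the equality characterization is established for $k=4$.

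The regime you flag as difficult does not need this estimate at all. Suppose, say, the $S$-part is a single member $X$. Then $X\supseteq S$ and $|X|=4$ give $|X\cap R|\le1$, so $R\setminus X$ contains a pair $R'$. Every other member contains $R'$, so Lemma~\ref{asd} applies with $a=4$. Its equality case would require the $R'$-part to be the full $R'$-star, which is impossible since all those members contain the $3$-set $R$; so this case is strict. The paper treats every singleton part this way before splitting by core sizes.

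For $p,q\ge2$, note that
$(t+1)f(t+1)/(tf(t))=(t+1)(\ell-t)/(t(n-3-t))\le1$ is equivalent to $(t+1)\ell\le t(n-2)$. This holds for $t\ge2$ because $\ell\le n/2$ and $n\ge8$. It fails at $t=1$ when $n=2\ell$, which is exactly why $p=1$ looked awkward. Hence $f$ and $tf(t)$ are nonincreasing on $[2,n-3]$. This gives $(p+q)(f(p)+f(q))\le 2(n-1)f(2)$, and therefore
\[
|\mathcal A||\mathcal B|\le 2(n-3)W+2(n-1)\tbinom{n-5}{\ell-2}.
\]
Expand $W$ and $Q=Q(n,4,\ell)$ over $\binom{n-6}{\ell-j}$, $2\le j\le6$. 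The coefficient vectors are $(9,18,15,6,1)$ and $(8,16,14,6,1)$, so $8W\le9Q$ and $8\binom{n-5}{\ell-2}\le Q$. Hence $|\mathcal A||\mathcal B|\le\frac{5n-14}2Q<\bigl(\binom{n-2}2+1\bigr)Q$, since $n^2-10n+22>0$ for $n\ge8$. This strictness is what yields the equality clause.

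Your own route of keeping $p=1$ can probably also be completed, using that $f(1)$ then carries a factor of at most $n-2$. That is a separate computation, however, and you would still have to supply it.
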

\begin{proof}
Take the disjoint cores $R,S$ in the hypotheses, and shrink them
until their sizes sum to $k+2$. No $k$-set contains both reduced
cores, so the original two nonempty parts are retained.
If one part is a singleton, say $\{X\}$ in the $S$-part, then
$|R\setminus X|\geq|R|-(k-|S|)=2$.
Every other member contains any chosen two-element subset of
$R\setminus X$. Lemma~\ref{asd} gives the bound and its equality
characterization. For $k=3$ a core has size three, so this settles
all cases.

For $k=4$, the only remaining case has $|R|=|S|=3$, with both
parts of size at least two. Write
\[
\mathcal A=\{R\cup\{x\}:x\in X\}\cup\{S\cup\{y\}:y\in Y\}.
\]
Put $a=|X|$ and $b=|Y|$, so $2\leq a,b\leq n-3$. Let
\[
W=\binom n\ell-2\binom{n-3}\ell+\binom{n-6}\ell,
\qquad f(t)=\binom{n-3-t}{\ell-t}.
\]
A member of $\mathcal B$ either meets both cores, or avoids $R$
and contains $X$, or avoids $S$ and contains $Y$. Therefore
$|\mathcal B|\leq W+f(a)+f(b)$.
Both $f(t)$ and $tf(t)$ are nonincreasing for $2\leq t\leq n-3$.
For the latter assertion, when the terms are positive,
\[
\frac{(t+1)f(t+1)}{tf(t)}
=\frac{(t+1)(\ell-t)}{t(n-3-t)}\leq1,
\]
since $\ell\leq n/2\leq2(n-2)/3\leq t(n-2)/(t+1)$.
Terms with $t>\ell$ vanish. Consequently
\[
(a+b)(f(a)+f(b))
\leq4f(2)+2(n-3)f(2)=2(n-1)f(2),
\]
where the diagonal terms use monotonicity of $tf(t)$ and the
cross terms use $a,b\leq n-3$ and $f(a),f(b)\leq f(2)$.
It follows that
\begin{equation}\label{eq:k4-bound}
|\mathcal A||\mathcal B|\leq2(n-3)W+2(n-1)\binom{n-5}{\ell-2}.
\end{equation}
Set $Q=Q(n,4,\ell)$. Counting by the number of points
chosen from a disjoint two-set and four-set for $Q$, and
from the two three-element cores for $W$, yields
\begin{align*}
Q&=8\binom{n-6}{\ell-2}+16\binom{n-6}{\ell-3}+14\binom{n-6}{\ell-4}+6\binom{n-6}{\ell-5}+\binom{n-6}{\ell-6},\\
W&=9\binom{n-6}{\ell-2}+18\binom{n-6}{\ell-3}+15\binom{n-6}{\ell-4}+6\binom{n-6}{\ell-5}+\binom{n-6}{\ell-6}.
\end{align*}
Thus $8W\leq9Q$ and $8\binom{n-5}{\ell-2}\leq Q$.
Using \eqref{eq:k4-bound}, we obtain
\[
|\mathcal A||\mathcal B|\leq\frac{5n-14}2Q
<\left(\binom{n-2}2+1\right)Q,
\]
because $n^2-10n+22>0$ for $n\geq8$. This remaining case is
strict, completing the equality analysis as well.
\end{proof}

For $2\leq r\leq k$, write
\begin{equation}\label{eq:DrQr}
d_r=\binom{n-r}{k-r},\qquad
Q_r=\binom n\ell-\binom{n-r}\ell-\binom{n-k}\ell
       +\binom{n-k-2}\ell.
\end{equation}
In particular $Q_2=Q(n,k,\ell)$ and $F(n,k,\ell)=(d_2+1)Q_2$.

\begin{lemma}\label{lem:core-numerical}
Let $k\geq5$ and $3\leq r\leq s$, with $r+s=k+2$.
Then $(d_r+d_s)Q_r<(d_2+1)Q_2$.
\end{lemma}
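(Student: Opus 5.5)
The plan is to compare both factors separately against the two-point data $d_2$ and $Q_2$ and to show that the product of the two ratios is below $1$. Write $\rho_t=d_t/d_2$ and $\sigma_r=Q_r/Q_2$. It suffices to prove
\[
(\rho_r+\rho_s)\,\sigma_r<1 ,
\]
because $(d_2+1)Q_2>d_2Q_2$.

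\emph{Step 1: the $d$-ratios.} From $d_{t+1}/d_t=(k-t)/(n-t)$ and $n\ge 2k$ I get $d_{t+1}/d_t\le (k-2)/(2k-2)<\tfrac12$ for every $t\ge2$, so $\rho_t<2^{-(t-2)}$. Sharper, $\rho_3=(k-2)/(n-2)\le (k-2)/(2k-2)$. Since $3\le r\le s$ and $r+s=k+2$, we have $s\ge (k+2)/2$. For $k\ge5$ this gives $s\ge4$, hence $\rho_s\le 1/4$, and $\rho_s$ is much smaller when $r=3$, since then $d_{k-1}=n-k+1$ is tiny relative to $d_2=\binom{n-2}{k-2}$.

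\emph{Step 2: the $Q$-ratios.} From \eqref{eq:DrQr},
\[
Q_r-Q_2=\binom{n-2}\ell-\binom{n-r}\ell=\sum_{i=3}^{r}\binom{n-i}{\ell-1}\le (r-2)\binom{n-3}{\ell-1}.
\]
For the lower bound I use $Q_2=Q(n,k,\ell)$, which has two summands; each is at least $\binom{n-2}{\ell-1}\bigl(1-\binom{n-k-2}{\ell-1}/\binom{n-2}{\ell-1}\bigr)$. With $n\ge 2\ell$ and $k\ge5$, the subtracted proportion is at most roughly $2^{-k}$, so it is small. This gives $Q_2\ge 2c\binom{n-3}{\ell-1}$ with $c$ close to $1$; I will aim for an explicit constant such as $c\ge 7/8$. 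Hence $\sigma_r\le 1+(r-2)/(2c)$.

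\emph{Step 3: combining.} The target becomes
\[
\bigl(\rho_r+\rho_s\bigr)\Bigl(1+\tfrac{r-2}{2c}\Bigr)<1 .
\]
For $r=3$ the left side is at most about $\bigl(\tfrac{k-2}{2k-2}+\rho_{k-1}\bigr)\bigl(1+\tfrac1{2c}\bigr)\approx \tfrac12\cdot1.57$, plus a negligible term, which is below $1$. For $r\ge4$, both $\rho_r,\rho_s\le 2^{-(r-2)}$, so the left side is at most $2^{-(r-3)}\bigl(1+(r-2)/(2c)\bigr)$. This is decreasing in $r$ and already below $1$ at $r=4$ when $c\ge 7/8$: the value is $\tfrac12\bigl(1+\tfrac{4}{7}\bigr)<1$.

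\emph{Main obstacle.} The step I expect to need care is making the constant $c$ in Step 2 explicit and uniform over all $n\ge 2\ell\ge 2k$ with $k\ge5$. This matters most near $n=2\ell$ with $\ell$ close to $k$, where the proportion $\binom{n-k-2}{\ell-1}/\binom{n-2}{\ell-1}$ is largest. I would bound that proportion by $\prod_{i=0}^{k-1}\frac{n-\ell-1-i}{n-2-i}\le 2^{-k}$ using $n\ge2\ell$. If the estimate is tight anywhere, it will be in the $r=3$ case with $n=2k$; there I would fall back on the exact value $\rho_3=(k-2)/(2k-2)$ and check $k=5,6$ by direct computation.
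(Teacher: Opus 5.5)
Your Step 2 is wrong, and the problem is not the constants: no uniform $c>0$ with $Q_2\ge 2c\binom{n-3}{\ell-1}$ exists on the admissible range.

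The factors of $p=\binom{n-k-2}{\ell-1}/\binom{n-2}{\ell-1}=\prod_{i=0}^{k-1}\frac{n-\ell-1-i}{n-2-i}$ are $1-\frac{\ell-1}{n-2-i}$. These \emph{increase} with $n$ and tend to $1$. The hypothesis $n\ge 2\ell$ is only a lower bound on $n$, so it cannot give an upper bound on $p$. The bound $p\le 2^{-k}$ is true at $n=2\ell$ but fails once $n$ is large, since $p\to1$ as $n\to\infty$. Here $n$ is unbounded, and the lemma is also applied with $\ell<k$ after the uniformities are interchanged.

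For example, $k=5$, $\ell=3$, $n=20$ gives $p=78/153$, $Q_2=155$, $Q_3=291$, so $\sigma_3\approx 1.88>1+\frac47$. More generally, for fixed $k,\ell$ one has $Q_r-Q_2=\sum_{i=3}^r\binom{n-i}{\ell-1}=\Theta(n^{\ell-1})$ but $Q_2=\Theta(n^{\ell-2})$, so $\sigma_r$ grows linearly in $n$. The regime you flagged as the main obstacle, $n$ near $2\ell$, is actually the favorable one.

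So the two ratios cannot be bounded separately. The worst case for $\rho_r$ is $n=2k$, while $\sigma_r$ blows up as $n\to\infty$. The lemma holds only because $\rho_r=\prod_{j=2}^{r-1}\frac{k-j}{n-j}$ decays like $n^{-(r-2)}$.

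You need an $n$-dependent estimate. For example, $Q_2\ge\frac{2k}{n-2}\binom{n-2}{\ell-1}$, because a uniform point of a uniform $(\ell-1)$-set lies in the fixed $k$-set with probability $k/(n-2)$. This gives $\sigma_r\le 1+\frac{(r-2)(n-2)}{2k}$, and you must then bound $\rho_r\sigma_r$ jointly.

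For $r\ge4$ this product decreases in $n$, so it suffices to evaluate at $n=2k$. Even there your crude bounds $\rho_4+\rho_s\le\frac12$ and $\sigma_4\le 3-\frac2k$ multiply to $\frac32-\frac1k>1$. You therefore need the exact value of $\rho_4$ and the smallness of $d_s/d_4$, with $k=6,7$ checked separately.

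For $r=3$, the decay $\rho_3=\frac{k-2}{n-2}$ only cancels the growth of $\sigma_3$. The bound $\rho_3\sigma_3\le\frac{(k-2)(n+2k-2)}{2k(n-2)}$ equals $1-\frac{3k-2}{2k(k-1)}$ at $n=2k$. Your estimate of about $\frac12\cdot 1.57$ is therefore unsupported, and the true margin is only of order $1/k$. It has to be extracted explicitly, for instance by proving $(d_3+n-k+1)(n+2k-2)<2k(d_2+1)$ from $d_3=\frac{k-2}{n-2}d_2$, $d_{k-1}=n-k+1$ and $d_2\ge\binom{n-2}{3}$. This is how the paper argues.
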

\begin{proof}
The proportion of $(\ell-1)$-sets of an $(n-2)$-element ground
set meeting a fixed $k$-set is at least $k/(n-2)$: sample a
uniform point of a uniform $(\ell-1)$-set.
The two summands defining $Q_2$ are nonincreasing in their index.
Thus
\[
Q_2\geq\frac{2k}{n-2}\binom{n-2}{\ell-1},\qquad
\frac{Q_r}{Q_2}\leq1+\frac{(r-2)(n-2)}{2k},
\]
where the second inequality uses
$Q_r=Q_2+\sum_{i=3}^r\binom{n-i}{\ell-1}$.

For $r=3$, we have $s=k-1$, $d_s=n-k+1$ and
$d_3=(k-2)d_2/(n-2)$. Hence
\begin{align*}
&2k(d_2+1)-(d_3+n-k+1)(n+2k-2)\\
&\quad=\left(k+2-\frac{2k(k-2)}{n-2}\right)d_2
       +2k-(n-k+1)(n+2k-2)\\
&\quad>3d_2+2k-2(n-4)(n-1)>0.
\end{align*}
Indeed the coefficient of $d_2$ is greater than three,
$n-k+1\leq n-4$, $n+2k-2\leq2(n-1)$, and
$d_2\geq\binom{n-2}3$. Moreover
$3\binom{n-2}3>2(n-4)(n-1)$ for $n\geq10$, equivalently
$n^2-9n+10>0$. This proves the desired comparison for $r=3$.

Suppose $r\geq4$. Successive binomial ratios give
\[
\frac{d_s}{d_r}\leq2^{-(s-r)},\qquad
\frac{d_r}{d_2}=\prod_{j=2}^{r-1}\frac{k-j}{n-j}.
\]
For fixed $k,r$, the expression
\[
\frac{d_r}{d_2}\left(1+\frac{(r-2)(n-2)}{2k}\right)
\]
decreases with $n$: after expansion it is a positive constant times
a sum of products of positive decreasing reciprocal factors.
At $n=2k$, every factor $(k-j)/(2k-j)$ is less than $1/2$,
and the parenthesis is less than $r-1$. Therefore the expression
is less than $(r-1)/2^{r-2}$.
For $r\geq5$ this gives
\[
\frac{(d_r+d_s)Q_r}{(d_2+1)Q_2}
<\frac{r-1}{2^{r-3}}\leq1.
\]
For $r=4$ and $k\geq8$, use $1+d_s/d_4\leq5/4$ to obtain an
upper bound $15/16$. For $k=6,7$, respectively, the values
$d_4/d_2$ at $n=2k$ are $2/15$ and $5/33$. The corresponding
ratios are strictly less than
$2(2/15)3=4/5$ and $(3/2)(5/33)3=15/22$.
All cases are strict.
\end{proof}

\begin{lemma}\label{lem:two-part-product}
Let $k\geq5$. Suppose $R,S\subseteq[n]$ are disjoint,
$3\leq r:=|R|\leq s:=|S|$, and $r+s=k+2$.
Let $\mathcal A$ be the disjoint union of nonempty $k$-uniform
families $\mathcal A_R,\mathcal A_S$ whose members contain
$R,S$, respectively. Every $\ell$-uniform family $\mathcal B$
cross-intersecting with $\mathcal A$ satisfies
\[
|\mathcal A||\mathcal B|\leq(d_r+d_s)Q_r<F(n,k,\ell).
\]
\end{lemma}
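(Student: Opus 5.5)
The second inequality is exactly Lemma~\ref{lem:core-numerical}, since $F(n,k,\ell)=(d_2+1)Q_2$. The work is therefore the first inequality $|\mathcal A||\mathcal B|\leq(d_r+d_s)Q_r$. My plan is to compare everything with the trivial bound $(d_r+d_s)W$. Here $W$ counts the $\ell$-sets meeting both $R$ and $S$, namely
\[
W=\binom n\ell-\binom{n-r}\ell-\binom{n-s}\ell+\binom{n-k-2}\ell .
\]
Then $Q_r$ is $W$ plus an explicit surplus $Q_r-W=\binom{n-s}\ell-\binom{n-k}\ell$. This surplus is the number of $\ell$-sets avoiding $S$ and meeting a fixed $(r-2)$-subset $R'$ of $R$. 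Equivalently, $Q_r$ counts sets meeting $R$ and the $k$-set $S\cup R'$. I expect the extremal picture to be: $\mathcal A_S$ full, and $\mathcal A_R$ as large as the constraints allow.

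First I split $\mathcal B=\mathcal B_0\,\dot\cup\,\mathcal B_{\bar R}\,\dot\cup\,\mathcal B_{\bar S}$ according to whether a member meets both cores, avoids $R$, or avoids $S$. No $\ell$-set with $\ell<n-k-2$ can avoid both cores while meeting members of both parts. Indeed, both parts are nonempty, and a set avoiding both cores must meet the residues $A\setminus R$, $A\setminus S$; I would check via $n>k+\ell$ that this case gives nothing new, or absorb it into the counts below. Write $x=|\mathcal B_{\bar R}|$, $y=|\mathcal B_{\bar S}|$, $\alpha=|\mathcal A_R|\leq d_r$ and $\beta=|\mathcal A_S|\leq d_s$.

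The residual families $\{A\setminus R:A\in\mathcal A_R\}\subseteq\binom{[n]\setminus R}{k-r}$ and $\mathcal B_{\bar R}$ are cross-intersecting on $n-r$ points, and similarly on the $S$ side. Lemma~\ref{lem:density} then gives a tradeoff of the form $x\leq E_RL_R(d_r-\alpha)/(d_rL_R+\alpha)$, where $E_R=\binom{n-r}\ell$, and analogously for $y$ in terms of $\beta$. This is exactly the mechanism used in Lemma~\ref{asd}. Moreover $\alpha=d_r$ forces $x=0$ and $\beta=d_s$ forces $y=0$, because $n>k+\ell$.

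The goal is then
\[
(\alpha+\beta)(W+x+y)\leq(d_r+d_s)Q_r .
\]
Using the density bounds, I would show that each unit of deficit $d_r-\alpha$ gains at most a bounded amount in $x$. The precise claim is $(\alpha+\beta)x\leq(d_r-\alpha)\,W'$ for a suitable $W'\leq Q_r$, mirroring \eqref{eq:one-ratio}, and symmetrically for $y$. The slack $Q_r-W$ in the target provides room for the residual cross terms. Summing the two tradeoffs yields the bound.

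The main obstacle is this ratio estimate. It must hold uniformly for $3\leq r\leq s$, while the parameter $L$ of Lemma~\ref{lem:density} varies with the residual uniformities $k-r$ and $k-s$. The small residue case $k-s$ small, where $d_s$ is tiny and $\mathcal A_S$ may be very small, is the delicate one. If the density argument is too weak there, I would fall back on a direct count: bounding $y$ by $\binom{n-s}{\ell}-\binom{n-k-2+ \ldots}{\ell}$-type expressions and using $tf(t)$-monotonicity as in Proposition~\ref{prop:small-critical}.
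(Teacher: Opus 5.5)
The second inequality is indeed Lemma~\ref{lem:core-numerical}. The argument for the first inequality, however, has a genuine gap, and it sits in the ratio estimates you flag as the main obstacle.

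\textbf{The $S$-side tradeoff is false.} Take $\mathcal A_R=\{A\in\binom{[n]}k:R\subseteq A\}$ and $\mathcal A_S=\{S\cup R'\}$ with $R'\subseteq R$, $|R'|=r-2$. Let $\mathcal B$ be the full companion, that is, all $\ell$-sets meeting $R$ and $S\cup R'$. Then $\alpha=d_r$, $\beta=1$, and the exact value of $y$ is $\binom{n-s}\ell-\binom{n-k}\ell=Q_r-W$. Your claim $(\alpha+\beta)y\leq(d_s-\beta)W''$ with $W''\leq Q_r$ would force $(d_r+1)(Q_r-W)\leq(d_s-1)Q_r$. For $(n,k,\ell)=(20,5,3)$, $r=3$, $s=4$, this reads $137\cdot105\leq15\cdot291$, which is false. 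Since this is the true $y$, no sharper estimate of $y$ can help; the accounting itself is wrong.

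The missing idea is that the $S$-part needs no tradeoff at all. Fix a single $X\in\mathcal A_S$. Every member of $\mathcal B$ meeting $R$ must meet $X$. We have $|X\cap R|\leq r-2$, and the number of $\ell$-sets meeting $R$ and $X$ increases with $|X\cap R|$. Hence there are at most $Q_r$ such members; this is exactly your own observation that $Q_r$ counts sets meeting $R$ and $S\cup R'$. Thus $|\mathcal B|\leq Q_r+z$, where $z$ counts the members avoiding $R$, and only the trivial bound $|\mathcal A_S|\leq d_s$ is used. Splitting off $W$ and running a density argument for $\mathcal A_S$ are unnecessary.

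\textbf{The remaining $R$-side estimate needs a second bound.} What is left is $(h+d_s)z<(d_r-h)Q_r$ for $h=|\mathcal A_R|<d_r$. This does not follow from Lemma~\ref{lem:density} alone, unlike \eqref{eq:one-ratio}, where the other part had a single member. Write $u=h/d_r$, $w=d_s/d_r$ and $q=Q_r/\binom{n-r}\ell$. Using only the density bound, you would need $(u+w)L/(L+u)<q$, and the left side tends to $w$ as $u\to0$. When $r=s$ one has $w=1$, whereas $q<1$ for large $n$. For instance, at $(n,k,\ell)=(30,6,3)$ one has $q=976/2600$, so the density bound fails for small $h$.

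You must also use $z\leq\binom{n-r}\ell-\binom{n-k}\ell$, which comes from one fixed residual member of $\mathcal A_R$. Then show that the minimum of the two bounds stays below $q$ on $[0,1)$. This is what the two comparisons in \eqref{eq:two-comparisons} accomplish, and it is the real content of the proof; your proposal gives no argument for it.

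A smaller point: your assertion that no admissible $\ell$-set avoids both cores is false when the parts are small. It is harmless if you combine the avoiders of $R$ and of $S$ by a union bound, but the decomposition of $\mathcal B$ is not disjoint.
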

\begin{proof}
Put $a=k-r=s-2$ and $b=r-2$. Then
$a\geq b\geq1$, $a\geq2$, and $k=a+b+2$.
Write
\[
E=\binom{n-r}\ell,
\quad E_0=\binom{n-k}\ell,\quad h=|\mathcal A_R|.
\]
In particular $|\mathcal A|\leq h+d_s$.
Choose $X\in\mathcal A_S$. Since $S\subseteq X$,
$|X\cap R|\leq r-2$. Inclusion--exclusion shows that there
are at most $Q_r$ members of $\mathcal B$ meeting $R$ and $X$:
the count increases with $|X\cap R|$ and has value $Q_r$ at
$|X\cap R|=r-2$.
Let $z$ be the number of members of $\mathcal B$ avoiding $R$.
These form a family cross-intersecting with
$\{A\setminus R:A\in\mathcal A_R\}$ on $[n]\setminus R$.
Meeting one fixed residual member, and applying
Lemma~\ref{lem:density}, respectively, give
\begin{equation}\label{eq:two-z}
z\leq E-E_0,\qquad
z\leq\frac{EL(d_r-h)}{d_rL+h},\qquad
L=\frac{a\ell}{(n-r)(n-k-\ell)}.
\end{equation}
Set
\[
w=\frac{d_s}{d_r},\quad c=1-\frac{E_0}E,\quad
\rho^2=\frac L{1+L}=\frac{a\ell}{(n-k)(n-r-\ell)},\quad
q=\frac{Q_r}E.
\]
We will establish
\begin{equation}\label{eq:two-comparisons}
q>\rho^2(1+w),\qquad
q>wc+\rho^2(1+w)(1-c).
\end{equation}
First,
\[
0<w=\frac{\binom{n-2}b}{\binom{n-2}a}\leq1,
\qquad wa\leq b.
\]
For the latter inequality, if $a=b$ it is equality; if $a>b$,
binomial monotonicity and $n\geq2a+2b+4$ give
$w\leq(b+1)/(n-b-2)\leq b/a$.

Count $Q_r$ using an $r$-set and a $k$-set whose intersection
has size $b$. Sets meeting this intersection yield
\[
Q_r\geq\binom n\ell-\binom{n-b}\ell
\geq b\binom{n-b}{\ell-1}\geq b\binom{n-r}{\ell-1}.
\]
Since $n-r-\ell\geq a+1$ and $n-k\geq k=a+b+2$,
\[
\frac q{\rho^2}\geq
\frac{b(n-k)(n-r-\ell)}{a(n-r-\ell+1)}
\geq\frac{b(a+b+2)(a+1)}{a(a+2)}.
\]
If $b\geq2$, this is greater than $b\geq2\geq1+w$.
If $b=1$, then
\[
w\leq\frac2{n-3}\leq\frac2{2a+3}
<\frac{2a+3}{a(a+2)},
\]
whereas the last lower bound for $q/\rho^2$ equals
$1+(2a+3)/(a(a+2))$. This proves the first inequality in
\eqref{eq:two-comparisons}.

Partitioning the same counted sets according to whether they
meet the common $b$-set gives the identity
\begin{equation}\label{eq:Qr-decomposition}
Q_r=\binom n\ell-\binom{n-b}\ell
 +\sum_{i=1}^{2}\sum_{j=1}^{a+2}\binom{n-b-i-j}{\ell-2}.
\end{equation}
The union bound gives $E-E_0\leq a\binom{n-r-1}{\ell-1}$,
so the first difference in \eqref{eq:Qr-decomposition} is at least
$b\binom{n-r-1}{\ell-1}\geq w(E-E_0)$.
The double sum is at least $2(a+2)\binom{n-k-2}{\ell-2}$.
Furthermore
\[
\rho^2 E_0
=\frac{a(n-k-1)}{(n-r-\ell)(\ell-1)}\binom{n-k-2}{\ell-2}
<a\binom{n-k-2}{\ell-2},
\]
because
\[
(n-r-\ell)(\ell-1)-(n-k-1)
=(\ell-2)(n-r-\ell-1)+a-1>0.
\]
Thus the double sum is strictly greater than
$2\rho^2E_0\geq(1+w)\rho^2E_0$. This proves the second
inequality in \eqref{eq:two-comparisons}.

If $h=d_r$, \eqref{eq:two-z} forces $z=0$, giving
$|\mathcal A||\mathcal B|\leq(d_r+d_s)Q_r$.
Otherwise put $x=h/d_r<1$. From \eqref{eq:two-z},
\[
\frac zE\leq\min\left\{c,\frac{L(1-x)}{L+x}\right\}.
\]
If $L\geq w$, then
\[
\frac{(x+w)z}{E(1-x)}
\leq\frac{L(x+w)}{L+x}
\leq\frac{L(1+w)}{1+L}<q.
\]
If $L<w$, the function $c(x+w)/(1-x)$ is increasing and
$L(x+w)/(L+x)$ is decreasing. They coincide at
$x=L(1-c)/(L+c)\in[0,1)$, with common value
$wc+\rho^2(1+w)(1-c)<q$.
Their minimum is therefore less than $q$ throughout $[0,1)$.
In either case $(h+d_s)z<(d_r-h)Q_r$, and consequently
\[
|\mathcal A||\mathcal B|
\leq(h+d_s)(Q_r+z)<(d_r+d_s)Q_r.
\]
Lemma~\ref{lem:core-numerical} completes the proof.
\end{proof}

\subsection{The two-core product theorem and its critical consequence}
The uniformities in the following theorem need not be ordered,
and neither family is assumed to be nontrivial.
\begin{theorem}\label{thm:two-core-product}
Let $k,\ell\geq3$, $n\geq2\max\{k,\ell\}$, and $n>k+\ell$.
Let $\mathcal A\subseteq\binom{[n]}k$ and
$\mathcal B\subseteq\binom{[n]}\ell$ be nonempty and
cross-intersecting. Suppose there are disjoint sets $R,S$ with
$2\leq|R|,|S|\leq k$ and $|R|+|S|\geq k+2$ such that every
member of $\mathcal A$ contains $R$ or $S$, and both parts are
nonempty. Then
\[
|\mathcal A||\mathcal B|
\leq\left(\binom{n-2}{k-2}+1\right)
\sum_{i=1}^{2}\left(\binom{n-i}{\ell-1}
                         -\binom{n-k-i}{\ell-1}\right).
\]
Equality holds if and only if, for disjoint
$P\in\binom{[n]}2$ and $X\in\binom{[n]}k$,
\begin{align*}
\mathcal A&=\{A\in\tbinom{[n]}k:P\subseteq A\}\cup\{X\},\\
\mathcal B&=\{B\in\tbinom{[n]}\ell:
 B\cap P\neq\emptyset,\ B\cap X\neq\emptyset\}.
\end{align*}
\end{theorem}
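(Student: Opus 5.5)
The plan is to reduce to core sizes summing exactly to $k+2$ and then split according to the smaller core size, drawing on the results already proved in this section. First shrink $R$ and $S$, as in Corollary~\ref{cor:critical-size}: replace them by subsets $R'\subseteq R$, $S'\subseteq S$ with $|R'|,|S'|\geq2$ and $|R'|+|S'|=k+2$. Every member of $\mathcal A$ still contains $R'$ or $S'$. No $k$-set contains both, since they are disjoint of total size $k+2>k$. Hence the two parts stay disjoint, and each stays nonempty because it contains the corresponding original part. Write $r=|R'|\leq s=|S'|$, swapping names if needed.

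\emph{Case $r=2$.} Then $s=k$, so the $S'$-part is the single member $X=S'$, which is disjoint from $R'$. Every other member contains $R'$. Thus $\mathcal A=\{X\}\cup\{R'\cup H:H\in\mathcal H\}$ with $\mathcal H\neq\emptyset$. Lemma~\ref{asd}, applied with $a=k$, $b=\ell$ and $R'$ in the role of $R$, gives exactly the bound $(\binom{n-2}{k-2}+1)Q(n,k,\ell)$. Its equality clause gives $\mathcal H=\binom{[n]\setminus R'}{k-2}$ and $\mathcal B=\mathcal D_\ell(R',X)$, which is the stated extremal configuration with $P=R'$. The hypotheses of Lemma~\ref{asd} ($n\geq2\max\{k,\ell\}$, $n>k+\ell$, $k,\ell\geq3$) are exactly the standing ones, so the order of $k$ and $\ell$ plays no role here.

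\emph{Case $r\geq3$.} For $k=3,4$, this and the singleton situation are handled by Proposition~\ref{prop:small-critical}. For $k=3$ the shrinking forces $r=2$. For $k=4$ the only remaining shape is $r=s=3$, where the direct count is strict. For $k\geq5$ with $3\leq r\leq s$, Lemma~\ref{lem:two-part-product} applies verbatim to the shrunk cores and the two nonempty parts. It gives $|\mathcal A||\mathcal B|\leq(d_r+d_s)Q_r<F(n,k,\ell)$, with the final strict inequality coming from Lemma~\ref{lem:core-numerical}. So equality never occurs in this case.

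Finally, the converse direction of the equality statement is Proposition~\ref{prop:two-point-construction}, up to relabeling: $\mathcal C_k(P,X)$ and $\mathcal D_\ell(P,X)$ are cross-intersecting with product $F(n,k,\ell)$, and they clearly admit the decomposition with cores $P$ and $X$.

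The main obstacle is that Proposition~\ref{prop:small-critical} is phrased as proving the present theorem for $k=3,4$, so the argument must not be circular. I would check that its proof uses only Lemma~\ref{asd} and direct counting, with no appeal to this theorem, so the case split is legitimate. I would also verify that none of Lemma~\ref{lem:two-part-product}, Lemma~\ref{lem:core-numerical} or Lemma~\ref{lem:density} needs $\ell\geq k$. Only $n\geq2\max\{k,\ell\}$ and $n>k+\ell$ seem to be used, for example $n\geq2a+2b+4=2k$ and $n-r-\ell\geq a+1$, and I would confirm each such step.
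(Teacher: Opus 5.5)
Your proposal is correct and follows the paper's proof. It uses Lemma~\ref{asd} when a two-element core is forced, Proposition~\ref{prop:small-critical} for $k=3,4$, Lemma~\ref{lem:two-part-product} together with Lemma~\ref{lem:core-numerical} for $k\geq5$, and Proposition~\ref{prop:two-point-construction} for the converse. The only difference is the order of steps: you shrink the cores before splitting, so a singleton part whose shrunk cores both have size at least three goes to Lemma~\ref{lem:two-part-product}, which indeed does not require non-singleton parts; the paper instead treats every singleton part first via Lemma~\ref{asd}, choosing a two-element subset of the other core outside $X$.
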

\begin{proof}
Let $R,S$ be the disjoint cores in the hypotheses.
If either part is a singleton $\{X\}$, choose a two-element
subset of the other core outside $X$; such a subset exists since
$|R|+|S|\geq k+2$. All other members contain that pair.
Lemma~\ref{asd} gives the required bound, with equality exactly
for $\mathcal C_k(R',X),\mathcal D_\ell(R',X)$ for the chosen
two-element set $R'$.

The cases $k=3,4$ are covered by Proposition~\ref{prop:small-critical}.
Suppose $k\geq5$ and neither part is a singleton. Neither core
has size $k$. Neither has size two, since that would force the
other to have size at least $k$. We may therefore shrink the
cores to sizes $3\leq r\leq s\leq k-1$ with $r+s=k+2$.
Every member still contains its assigned core; no $k$-set contains
both, so the two nonempty parts are retained.
Lemma~\ref{lem:two-part-product} gives a strict inequality in
this case. Conversely the stated extremal pair has product
$F(n,k,\ell)$ by Proposition~\ref{prop:two-point-construction}.
\end{proof}

\begin{corollary}\label{critically}
Let $\mathcal A\subseteq\binom{[n]}k$ and
$\mathcal B\subseteq\binom{[n]}\ell$ be nonempty families.
Suppose they are cross-intersecting and $\mathcal A$ is nontrivial
and critically shifted in the single-family sense. Then
\[
|\mathcal A||\mathcal B|\leq F(n,k,\ell).
\]
Equality holds if and only if, for some disjoint
$R\in\binom{[n]}2$ and $X\in\binom{[n]}k$,
\[
(\mathcal A,\mathcal B)=(\mathcal C_k(R,X),\mathcal D_\ell(R,X)).
\]
The equality characterization is within the class satisfying the
hypotheses, up to a simultaneous relabeling.
\end{corollary}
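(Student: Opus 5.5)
The plan is to reduce the corollary to Theorem~\ref{thm:two-core-product}, using the two-core decomposition of Lemma~\ref{lem:two-cores}. The standing assumptions of this section ($k,\ell\geq3$, $n\geq2\max\{k,\ell\}$, $n>k+\ell$) are in force. Since $\mathcal A$ is nontrivial and critically shifted in the single-family sense, Lemma~\ref{lem:section-cores} gives a critical pair $\{1,j\}$ together with its cores $C,D$. Lemma~\ref{lem:two-cores} then gives disjoint sets $R,S$ with $2\leq|R|,|S|\leq k$ and $|R|+|S|\geq k+2$, such that every member of $\mathcal A$ contains $R$ or $S$ and both parts are nonempty. These are exactly the hypotheses of Theorem~\ref{thm:two-core-product}. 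That theorem assumes only that $\mathcal A,\mathcal B$ are nonempty and cross-intersecting, so it yields $|\mathcal A||\mathcal B|\leq(\binom{n-2}{k-2}+1)Q(n,k,\ell)=F(n,k,\ell)$ at once.

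For the equality case, Theorem~\ref{thm:two-core-product} says that equality forces $\mathcal A=\{A:P\subseteq A\}\cup\{X\}$ and $\mathcal B=\mathcal D_\ell(P,X)$ for some disjoint $P\in\binom{[n]}2$ and $X\in\binom{[n]}k$. In the paper's notation this is $(\mathcal C_k(P,X),\mathcal D_\ell(P,X))$, so we take $R=P$.

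For the converse, I must check that every such pair lies in the class satisfying the hypotheses. Up to a simultaneous relabeling, take $P=\{1,2\}$ and $X=[3,k+2]$. Proposition~\ref{prop:two-point-construction} then shows three things: $\mathcal C_k(R,X)$ is nontrivial and critically shifted, its full mate is $\mathcal D_\ell(R,X)$, and the product equals $F(n,k,\ell)$. Both families are nonempty and cross-intersecting, so every pair of this shape attains equality.

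The one subtle point is that critical shiftedness is not invariant under relabeling. This is why the statement characterizes equality only up to a simultaneous relabeling. With that convention, the remaining obstacle, checking that the relabeled standard pair meets the hypotheses, is handled by Proposition~\ref{prop:two-point-construction}.
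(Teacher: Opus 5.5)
Your proposal is correct and follows the paper's proof: Lemma~\ref{lem:two-cores} supplies the two disjoint cores, Theorem~\ref{thm:two-core-product} gives both the bound and the equality characterization, and Proposition~\ref{prop:two-point-construction} shows that the canonical extremal pair lies in the hypothesis class. Your remark that critical shiftedness is not invariant under relabeling correctly explains why the characterization is stated only up to a simultaneous relabeling.
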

\begin{proof}
Lemma~\ref{lem:two-cores} supplies disjoint cores satisfying all
the hypotheses of Theorem~\ref{thm:two-core-product}. That theorem
gives the product bound and the equality characterization. The
canonical extremal pair is critically shifted by
Proposition~\ref{prop:two-point-construction}.
\end{proof}

\begin{lemma}[Reversing a shift of the two-point construction]
\label{lem:inverse-two-point}
Let $a,b\geq3$ and $n>a+b$. Suppose that
$(\mathcal F,\mathcal G)$ is a saturated cross-intersecting pair
with uniformities $a,b$, and that for some $i<j$,
\[
 s_{ij}(\mathcal F)=\mathcal C_a(R,X),\qquad
 s_{ij}(\mathcal G)=\mathcal D_b(R,X),
\]
where $|R|=2$, $|X|=a$ and $R\cap X=\emptyset$.
Then $(\mathcal F,\mathcal G)$ is itself a two-point construction,
up to a simultaneous relabeling.
\end{lemma}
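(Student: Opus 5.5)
We plan to argue directly from the definition of $s_{ij}$: each member of $s_{ij}(\mathcal F)$ is either an unmoved member of $\mathcal F$ or the image of a member $F$ with $j\in F$, $i\notin F$ whose replacement was absent. Write $\mathcal C=\mathcal C_a(R,X)$ and $\mathcal D=\mathcal D_b(R,X)$. If $j\notin R\cup X$, or if $i\notin R\cup X$ and every member of $\mathcal C\cup\mathcal D$ containing $i$ also contains $j$, the shift is essentially inert. Then $\mathcal F=\mathcal C$ and $\mathcal G=\mathcal D$ follow after a relabeling, since a shift only changes sets containing $j$ but not $i$. So the substantive cases are those where the shift moves something. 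These are organized by the positions of $i$ and $j$ relative to $R$, $X$ and the rest $W=[n]\setminus(R\cup X)$.

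\textbf{Step 1: cardinality and saturation.} Shifting preserves sizes, so $|\mathcal F|=|\mathcal C|$ and $|\mathcal G|=|\mathcal D|$. By saturation $\mathcal G=\overline N_b(\mathcal F)$, and it suffices to identify $\mathcal F$. Since $(\mathcal F,\mathcal G)$ is cross-intersecting with product $F(n,a,b)$, the pair is a candidate for equality in Theorem~\ref{thm:two-core-product}. Hence it is enough to show that $\mathcal F$ admits two disjoint cores with parts as in that theorem, or a two-element core with one exceptional member. Equality in Lemma~\ref{asd} then forces the two-point form.

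\textbf{Step 2: locating the preimage.} The members of $\mathcal C$ containing $i$ but not $j$ may come from sets $F\ni j$, $F\not\ni i$. Case $i\in R$ and $j\notin R$: the members $R\cup H$ with $j\notin H$ could come from $(R\setminus\{i\})\cup\{j\}\cup H$. Case $i\in X$: the exceptional set $X$ could come from $(X\setminus\{i\})\cup\{j\}$. We treat the two families jointly. Suppose some $F\in\mathcal F$ arises as a moved preimage $(R\setminus\{i\})\cup\{j\}\cup H$. Then $\mathcal G$ must meet it, which constrains $\mathcal G$. But $\mathcal G$ shifts onto $\mathcal D$, whose members containing $i$ but not $j$ and avoiding the rest of $R$ are numerous: all $b$-sets meeting $X$ with $R$-trace $\{i\}$. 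Such a set $B$ either already lies in $\mathcal G$, or $(B\setminus\{i\})\cup\{j\}\in\mathcal G$. A counting or choice argument using $n>a+b$ should then produce a member of $\mathcal G$ disjoint from $F$, unless the preimages are all moved in a coherent way. Coherent movement means $\mathcal F$ is exactly the two-point construction on $R'=(R\setminus\{i\})\cup\{j\}$ (with $X$ possibly changed by the same swap), that is, $\mathcal F$ is a relabeling of $\mathcal C$ by the transposition $(i\,j)$. The analogous argument, with the roles of the families swapped, handles the exceptional member $X$ and the case $j\in X$, $i\in W$.

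\textbf{Main obstacle.} The hard part is the mixed situation in which only some members of the $R$-star are moved. We then need to show that this partial movement is incompatible with cross-intersection. We would first try to exhibit explicit disjoint pairs. For $H,H'$ with suitable complements, take $F=(R\setminus\{i\})\cup\{j\}\cup H$, moved, and compare it with an unmoved $R\cup H'\in\mathcal F$. Build $B\in\mathcal G$ containing $i$, avoiding $j$ and $R\setminus\{i\}$, and meeting $X$ outside $F$; this is possible since $n>a+b$. Such a $B$ is disjoint from $F$. We must still check that $B$ was unmoved in $\mathcal G$. If it was moved instead, its preimage contains $j$, and we swap roles and repeat. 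A short alternation argument should close this case. If it does not, the fallback is to derive two disjoint cores for $\mathcal F$ directly from its section structure at $\{i,j\}$ and invoke Theorem~\ref{thm:two-core-product}, whose equality case pins down the construction.
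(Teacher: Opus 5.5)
There is a genuine gap, and it sits exactly at the step you flag as the main obstacle. Your opening reduction is also mistaken. It is false that the shift is ``essentially inert'' when $j\notin R\cup X$. With $R=\{i,p\}$ and $j\notin R\cup X$, the full $\{j,p\}$-star together with $X$ is also mapped by $s_{ij}$ onto $\mathcal C_a(R,X)$, and so is any ``mixed'' family. This is in fact the only case with content. The remaining cases are quickly disposed of:
\begin{itemize}
\item if $i,j\notin R$, only the exceptional member can move;
\item if $i,j\in R$, nothing moves;
\item $j\in R$, $i\notin R$ is impossible, because $s_{ij}(\mathcal F)$ is fixed by $s_{ij}$ while $\mathcal C_a(R,X)$ is not;
\item $R=\{i,p\}$ forces $j\notin X$.
\end{itemize}

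In the remaining case put $V=[n]\setminus\{i,j,p\}$. Each $H\in\binom V{a-2}$ has exactly one of $\{i,p\}\cup H$, $\{j,p\}\cup H$ in $\mathcal F$. Each $K\in\binom V{b-1}$ meeting $X$ has exactly one of $\{i\}\cup K$, $\{j\}\cup K$ in $\mathcal G$. Cross-intersection forces disjoint $H,K$ to receive the same color. The lemma amounts to showing this coloring is constant, and your ``short alternation argument'' does not supply it.

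Comparing an arbitrary moved $H$ with an arbitrary unmoved $H'$ fails for $a\geq4$. The union $H\cup H'$ can have $2a-4\geq a$ points and contain $X$. Also $|V\setminus(H\cup H')|$ can be $n-2a+1<b-1$ when only $n>a+b$ is assumed. The missing idea is to compare only sets $H,H'$ that differ in one element. Then $|H\cup H'|=a-1<|X|$ and $|V\setminus(H\cup H')|=n-a-2\geq b-1$, so some $K$ disjoint from both meets $X$, forcing $H$ and $H'$ to have equal colors. The Johnson graph on $\binom V{a-2}$ is connected, and every eligible $K$ misses some $H$ (since $n-b-2\geq a-2$). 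Together these make the coloring constant.

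Your fallback through Theorem~\ref{thm:two-core-product} does not avoid this step; it presupposes it. In a mixed configuration the members other than $X$ all contain $p$ but meet only in $p$. So ``a two-element core with one exceptional member'', or the natural pair of disjoint cores, is available exactly when the coloring is already known to be constant. That route would also import the hypothesis $n\geq2\max\{a,b\}$ of Theorem~\ref{thm:two-core-product} and Lemma~\ref{asd}, which the lemma does not assume. Once constancy is proved, no product-equality theorem is needed. $\mathcal F$ is then the full $\{i,p\}$- or $\{j,p\}$-star together with $X$, and saturation determines $\mathcal G$.
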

\begin{proof}
Write $\mathcal F'=\mathcal C_a(R,X)$. If $i,j\notin R$,
the full $R$-star is unchanged by the shift and its inverse;
only the exceptional member can change. If $i,j\in R$,
every member of $\mathcal F'$ contains both or neither of
$i,j$, so $\mathcal F=\mathcal F'$. In either case saturation
determines $\mathcal G$ as the required full companion.

The case $j\in R$, $i\notin R$ is impossible: a full
$R$-star is not fixed by $s_{ij}$, even after adjoining $X$,
whereas every image of $s_{ij}$ is fixed by $s_{ij}$.
It remains that $R=\{i,p\}$ and $j\notin R$.
Here $j\notin X$, since otherwise the shift of $X$ contains
$i$ but not $p$ and is absent from $\mathcal F'$.

Put $V=[n]\setminus\{i,j,p\}$. For each
$H\in\binom V{a-2}$, exactly one of
$\{i,p\}\cup H$ and $\{j,p\}\cup H$ belongs to
$\mathcal F$. Color $H$ by that choice, $i$ or $j$.
Similarly, for every $K\in\binom V{b-1}$ meeting $X$,
exactly one of $\{i\}\cup K$ and $\{j\}\cup K$
belongs to $\mathcal G$; color $K$ accordingly.
Cross-intersection forces $H$ and $K$ to have the same color
whenever $H\cap K=\emptyset$.

If $H,H'\in\binom V{a-2}$ differ in one element, then
$|H\cup H'|=a-1<|X|$ and
$|V\setminus(H\cup H')|=n-a-2\geq b-1$.
Thus there is a $(b-1)$-set $K$ disjoint from both and
meeting $X$, so $H,H'$ have the same color.
The graph on $\binom V{a-2}$ joining sets that differ in one
element is connected. Moreover every eligible $K$ has a
disjoint $(a-2)$-set in $V$. All colors are therefore equal.

Consequently $\mathcal F$ is either the full
$\{i,p\}$-star together with $X$, or the full
$\{j,p\}$-star together with $X$. Saturation again gives
the claimed form of $\mathcal G$.
\end{proof}

\begin{corollary}\label{cor:critical-pair-product}
Let $n\geq2\ell\geq2k\geq6$, with $n>k+\ell$.
A saturated, nontrivial, critically shifted cross-intersecting
pair satisfies
\[
|\mathcal A||\mathcal B|\leq
\max\{M_1(n,k,\ell),M_2(n,k,\ell)\}.
\]
Equality holds precisely when $M_2(n,k,\ell)\geq M_1(n,k,\ell)$
and, for disjoint $R\in\binom{[n]}2$ and
$Y\in\binom{[n]}\ell$,
\[
 (\mathcal A,\mathcal B)
 =(\mathcal D_k(R,Y),\mathcal C_\ell(R,Y)).
\]
This characterization is within the class in the hypotheses.
\end{corollary}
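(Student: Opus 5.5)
By Lemma~\ref{lem:critical-separation} and the remark following it, exactly one family of a saturated critically shifted pair admits a trivializing shift, and that family is critically shifted in the single-family sense. We split according to which family it is.

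\emph{Case 1: $\mathcal A$ is critically shifted.} Corollary~\ref{critically} gives $|\mathcal A||\mathcal B|\leq F(n,k,\ell)$, with equality only for $(\mathcal C_k(R,X),\mathcal D_\ell(R,X))$. Lemma~\ref{lem:orientation} shows $F(n,k,\ell)<M_1(n,k,\ell)$ when $k\leq\ell$. So this case is always strict and never yields equality.

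\emph{Case 2: $\mathcal B$ is critically shifted.} Apply Corollary~\ref{critically} with the roles of the families and uniformities interchanged. This is legitimate because Theorem~\ref{thm:two-core-product} does not require ordered uniformities, and Lemmas~\ref{lem:section-cores}--\ref{lem:two-cores} only need $\ell\geq3$. We obtain $|\mathcal A||\mathcal B|\leq F(n,\ell,k)=(\binom{n-2}{\ell-2}+1)Q(n,\ell,k)$. Comparing definitions, $Q(n,\ell,k)=\sum_{i=1}^2(\binom{n-i}{k-1}-\binom{n-\ell-i}{k-1})$, so $F(n,\ell,k)=M_2(n,k,\ell)$. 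Hence the product is at most $M_2\leq\max\{M_1,M_2\}$. Equality forces $M_2\geq M_1$, and by the equality part of Corollary~\ref{critically} it forces $(\mathcal B,\mathcal A)=(\mathcal C_\ell(R,Y),\mathcal D_k(R,Y))$ for disjoint $R\in\binom{[n]}2$ and $Y\in\binom{[n]}\ell$.

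\emph{Converse.} When $M_2\geq M_1$, the pair $(\mathcal D_k(R,Y),\mathcal C_\ell(R,Y))$ belongs to the class in the hypotheses. Proposition~\ref{prop:two-point-construction}, with the uniformities swapped and $R=\{1,2\}$, $Y=[3,\ell+2]$, shows that $\mathcal C_\ell(R,Y)$ is nontrivial and critically shifted with full mate $\mathcal D_k(R,Y)$, which is also nontrivial. We must still check that the pair is saturated and critically shifted as a pair. Saturation in one direction is the full-mate statement of that proposition. The other direction is $\overline N_\ell(\mathcal D_k)=\mathcal C_\ell$, which holds because any $\ell$-set meeting every member of $\mathcal D_k$ either contains $R$ or equals $Y$; this uses $n>k+\ell$ to build a disjoint member otherwise. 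For the pair condition: every shift either fixes both families or moves $Y$ and trivializes $\mathcal C_\ell$, exactly as in the proof of Proposition~\ref{prop:two-point-construction}. The product is $M_2$ by the same counting.

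\emph{Main obstacle.} The delicate point is the orientation bookkeeping. One must confirm that the hypotheses of Corollary~\ref{critically} (which need $n\geq2\max\{k,\ell\}$, $n>k+\ell$ and $k,\ell\geq3$) hold in the swapped orientation. One must also use Lemma~\ref{lem:orientation} only in Case~1, where its assumption $k\leq\ell$ applies.
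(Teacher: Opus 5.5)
Your proposal is correct and is essentially the paper's proof. Lemma~\ref{lem:critical-separation} isolates the single critically shifted family, Corollary~\ref{critically} together with Lemma~\ref{lem:orientation} makes the case of $\mathcal A$ strict, and the swapped application gives $F(n,\ell,k)=M_2(n,k,\ell)$ with the equality form.

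One caution on the converse. Because the characterization is within the class, only the product count is needed, and that is all the paper uses. Your claim that $(\mathcal D_k(R,Y),\mathcal C_\ell(R,Y))$ belongs to the class holds for $R=\{1,2\}$, $Y=[3,\ell+2]$ but fails for arbitrary disjoint $R,Y$. Whenever some $i<j$ has $i\notin R\cup Y$ and $j\in R$, the shift $s_{ij}$ changes both families and leaves both nontrivial.
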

\begin{proof}
By Lemma~\ref{lem:critical-separation}, one family is critically
shifted in the single-family sense. If it is $\mathcal A$,
Corollary~\ref{critically} and Lemma~\ref{lem:orientation} give
$|\mathcal A||\mathcal B|\leq F(n,k,\ell)<M_1(n,k,\ell)$.
If it is $\mathcal B$, interchange the uniformities to obtain
$|\mathcal A||\mathcal B|\leq F(n,\ell,k)=M_2(n,k,\ell)$.
The equality characterization in Corollary~\ref{critically}
then gives exactly the asserted pair. Conversely that pair
has product $M_2$.
\end{proof}

Thus the critical alternative in Lemma~\ref{lem:normalization}
satisfies Theorem~\ref{mthm}. The shifted alternative is proved
in Section~\ref{sec:shifted-product} using the restricted-lex
comparison of Theorem~\ref{sec123:cck}.

\section{The product of shifted nontrivial cross-intersecting families}
\label{sec:shifted-product}

Throughout this section we allow the two uniformities to occur in
either order. Unless otherwise stated, assume
\begin{equation}\label{eq:shifted-range}
 k,\ell\geq3,\qquad n\geq2\max\{k,\ell\},\qquad n>k+\ell,
\end{equation}
and put
\begin{align*}
 S&=\binom{n-1}{k-1}-\binom{n-\ell-1}{k-1},
 &J&=\binom{n-2}{k-2},\\
 T&=\binom{n-1}{\ell-1}-\binom{n-k-1}{\ell-1},
 &K&=\binom{n-2}{\ell-2}.
\end{align*}
In particular, $(S+1)(T+1)=M_1(n,k,\ell)$ is symmetric in
$k,\ell$. We first prove the oriented estimate
\begin{equation}\label{eq:oriented-target}
 |\mathcal A||\mathcal B|
 \leq\max\{(S+1)(T+1),(2S-J)(K+2)\}
 \quad\hbox{if }|\mathcal A|\geq S+1.
\end{equation}
The second term will then be compared with the two bounds in the
introduction. The reduction to these two products uses the
restricted-lex comparison from Section~\ref{sec:shifting}.

\subsection{Removing layers beyond the second}

\begin{lemma}\label{lem:meeting-normalized-matching}
Let $W$ be a nonempty proper subset of an $N$-element set, and let
$\mathcal U_j=\{H:|H|=j,\ H\cap W\neq\emptyset\}$.
If $1\leq d\leq e\leq N$ and $\mathcal H\subseteq\mathcal U_d$,
then its upper $e$-shadow inside $\mathcal U_e$ satisfies
\[
 \frac{|\{G\in\mathcal U_e:H\subseteq G
                    \text{ for some }H\in\mathcal H\}|}{|\mathcal U_e|}
 \geq\frac{|\mathcal H|}{|\mathcal U_d|}.
\]
\end{lemma}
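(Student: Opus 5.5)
We argue by double counting in the bipartite inclusion graph between $\mathcal U_d$ and $\mathcal U_e$, in the style of the normalized matching (LYM-type) property. Let $\nabla\mathcal H$ denote the upper $e$-shadow of $\mathcal H$ inside $\mathcal U_e$. Join $H\in\mathcal U_d$ to $G\in\mathcal U_e$ when $H\subseteq G$. Every superset of a member of $\mathcal U_d$ meets $W$, so each $H\in\mathcal U_d$ has exactly $\binom{N-d}{e-d}$ neighbours, all lying in $\mathcal U_e$. The degree of $G\in\mathcal U_e$ is the number of $d$-subsets of $G$ meeting $W$:
\[
\deg G=\binom ed-\binom{e-|G\cap W|}d ,
\]
which depends on $|G\cap W|$ and is not constant. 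The only tool needed is the standard inequality: if every left vertex has degree $\delta_L$ and every right vertex has degree at most $\Delta_R$, then $|\nabla\mathcal H|\,\Delta_R\ge|\mathcal H|\,\delta_L$. This yields $|\nabla\mathcal H|/|\mathcal H|\ge\delta_L/\Delta_R$. We need instead $|\nabla\mathcal H|/|\mathcal H|\ge|\mathcal U_e|/|\mathcal U_d|=\delta_L/\overline{\Delta}_R$, where $\overline\Delta_R$ is the average right degree. So the crude bound does not suffice, and we must exploit the structure according to $|G\cap W|$.

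We therefore refine the count by stratifying by $i=|H\cap W|\ge1$ for $H$ and $s=|G\cap W|\ge1$ for $G$. For fixed $i$, the graph between the $d$-sets with $|H\cap W|=i$ and the $e$-sets with $|G\cap W|=s$ is a product of two inclusion graphs, one on $W$ and one on its complement. Each such product graph is biregular, so the normalized matching property holds for each block. The plan is to write $\mathcal H=\bigsqcup_i\mathcal H^{(i)}$ and to bound $|\nabla\mathcal H|$ from below through a suitable convex combination of the block inequalities.

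A cleaner route, which I would try first, is induction on $e-d$. It suffices to prove the case $e=d+1$, since the ratios then chain: $|\nabla^{(e)}\mathcal H|/|\mathcal U_e|\ge|\nabla^{(e-1)}\mathcal H|/|\mathcal U_{e-1}|\ge\cdots$. For $e=d+1$ the normalized matching property is equivalent, by Hall's theorem applied to the weighted graph, to the existence of a fractional "regular" weighting. Concretely, we need weights $w(H,G)\ge0$ on the edges such that $\sum_G w(H,G)=1/|\mathcal U_d|$ for every $H$ and $\sum_H w(H,G)=1/|\mathcal U_e|$ for every $G$. Given such weights, summing over the edges leaving $\mathcal H$ proves the claim at once. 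The weights can be sought as symmetric ones, depending only on $(|H\cap W|,|G\cap W|)$ and on whether the added point lies in $W$. This reduces the existence question to a small transportation problem between the levels $i$ and $s\in\{i,i+1\}$, a flow along a path. The flow through the path is determined successively by the row constraints, and the whole task becomes checking that every flow value is nonnegative.

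The main obstacle is verifying this nonnegativity. It amounts to the inequalities
\[
\frac{|\{H\in\mathcal U_d:|H\cap W|\ge i\}|}{|\mathcal U_d|}\le\frac{|\{G\in\mathcal U_e:|G\cap W|\ge i\}|}{|\mathcal U_e|}
\]
for every $i$, that is, the statement that conditioning on meeting $W$ preserves the stochastic monotonicity of $|X\cap W|$ as the size grows from $d$ to $d+1$. I would prove this by coupling: add a uniformly random new point to a uniform $d$-set conditioned to meet $W$, and show that the resulting $(d+1)$-set is stochastically dominated, with respect to $|G\cap W|$, by a uniform member of $\mathcal U_{d+1}$. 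The comparison reduces to the log-concavity of hypergeometric probabilities, as in Kruskal--Katona-style monotonicity, and I expect this hypergeometric comparison to be the delicate step.
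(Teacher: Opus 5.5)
Your reduction to $e=d+1$ and the symmetric transportation (path-flow) formulation are sound, and they are the paper's argument in different language. A nonnegative flow on the path with edges $i\to i$ and $i\to i+1$ is exactly a coupling of $X_d$ and $X_{d+1}$, the numbers of points of $W$ in uniform members of $\mathcal U_d$ and $\mathcal U_{d+1}$, with $X_d\le X_{d+1}\le X_d+1$. The paper builds such a coupling and realizes it by nested uniform sets. The gap is in the nonnegativity step. Solving the path constraints successively gives
\[
f(i,i+1)=\Pr(X_d\le i)-\Pr(X_{d+1}\le i),\qquad
f(i,i)=\Pr(X_{d+1}\le i)-\Pr(X_d\le i-1).
\]
So you need both $X_d\le_{\rm st}X_{d+1}$ and $X_{d+1}\le_{\rm st}X_d+1$. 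Your displayed inequalities express only the first.

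Moreover, the coupling you propose for that first inequality points the wrong way. Let $H$ be uniform in $\mathcal U_d$, and add a uniform new point. The resulting set $G$ has probability proportional to the number of its $d$-subsets meeting $W$. This number is $d$ if $|G\cap W|=1$ and $d+1$ otherwise. This tilts the uniform law on $\mathcal U_{d+1}$ toward larger $|G\cap W|$, so $|G\cap W|$ stochastically dominates $X_{d+1}$ rather than being dominated by it. For example, take $N=3$, $W=\{1,2\}$ and $d=1$. Then $\Pr(|G\cap W|=2)=1/2>1/3=\Pr(X_2=2)$, so your chain $X_d\le|G\cap W|\le_{\rm st}X_{d+1}$ breaks.

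Correctly oriented, your coupling gives $X_{d+1}\le_{\rm st}|G\cap W|\le X_d+1$, which is precisely the missing second condition. The first must be proved directly, for instance as in the paper. On the common support, $\Pr(X_{d+1}=i)/\Pr(X_d=i)$ is proportional to $(N-|W|-d+i)/(d+1-i)$, which increases with $i$. A monotone likelihood ratio then yields the stochastic order. With both conditions established, your flow is nonnegative and the argument is complete.
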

\begin{proof}
It suffices to couple uniform members of two consecutive ranks
so that the smaller one is contained in the larger one. Write
$b=|W|$. For a uniform member of $\mathcal U_j$, the number $X_j$
of its elements in $W$ has probability masses proportional to
$\binom bi\binom{N-b}{j-i}$, for $i\geq1$.
On their common support, the ratios of the masses of $X_{j+1}$
to those of $X_j$ are proportional to
$(N-b-j+i)/(j+1-i)$, which increases with $i$.
The ratios of the masses of $X_{j+1}-1$ to those of $X_j$ are
proportional to $(b-i)/(i+1)$, which decreases with $i$.
The endpoints of the supports give the same comparisons.
The elementary monotone-likelihood-ratio criterion therefore gives
$X_j\leq_{\rm st}X_{j+1}\leq_{\rm st}X_j+1$.
(The criterion follows by noting that the difference of the two
mass functions changes sign at most once and summing its tails.)

Use the same uniform quantile to couple these counts. Then
$X_j\leq X_{j+1}\leq X_j+1$. Independently order $W$ and its
complement uniformly at random, and take the first $X_j$ and
$j-X_j$ elements of the two orders. The resulting sets are nested
and have the required uniform marginals. Membership of the smaller
set in $\mathcal H$ forces membership of the larger set in its
upper shadow. Iteration proves the assertion.
\end{proof}

\begin{lemma}\label{lem:remove-late-layers}
Let $\mathcal L=\mathcal L_\ell([n],k,m)$ under
\eqref{eq:shifted-range}. If $m\geq2S-J$, then
\[
 m\,|\overline N_\ell(\mathcal L)|\leq(2S-J)(K+2).
\]
\end{lemma}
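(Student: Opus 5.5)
The plan is to reduce the lemma to an explicit computation with the restricted-lex counting formula. I then bound the companion size layer by layer, using Lemma~\ref{lem:meeting-normalized-matching} to control how fast it decreases.

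\emph{Step 1: the endpoint $m=2S-J$.} The layer capacities are $c_1=S$ and $c_2=\binom{n-2}{k-1}-\binom{n-\ell-1}{k-1}=S-J$, by Pascal's identity. Hence $m\geq 2S-J=c_1+c_2$ means that the first two minimum-element layers of $\mathcal L$ are full. Write $m=\sum_{i<r}c_i+t$ as in Proposition~\ref{prop:restricted-lex-count}, so that either $r=2$ and $t=c_2$, or $r\geq3$. In the first case \eqref{eq:restricted-companion-count} gives
\[
|\overline N_\ell(\mathcal L)|=\binom{n-1}{\ell-1}+1-\Bigl(\binom{n-2}{\ell-1}-1\Bigr)=K+2,
\]
which is exactly the claimed bound with equality. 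It therefore remains to treat $r\geq3$, where we must show $m\,g(m)\leq(2S-J)(K+2)$ with
\[
g(m)=\binom{n-r+1}{\ell-r+1}+r-1-\Psi(n-r,k-1,\ell-r+1;t).
\]

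\emph{Step 2: a linear upper bound for $g$ inside layer $r$.} The residual layer $\mathcal H_r$ lives in $\mathcal U_{k-1}$, the $(k-1)$-sets of $[r+1,n]$ meeting $W=[r+1,\ell+1]$. By complementation in $[r+1,n]$, the $(\ell-r+1)$-sets not in its neighborhood correspond to complements of the upper shadow of $\mathcal H_r$ in a meeting family. Lemma~\ref{lem:meeting-normalized-matching} then shows that the proportion of the layer-$r$ companion that is destroyed is at least $t/c_r$. This gives
\[
g(m)\leq (r-1)+\Gamma_r\Bigl(1-\frac t{c_r}\Bigr)+\varepsilon_r,
\]
where $\Gamma_r=\binom{n-r+1}{\ell-r+1}-\bigl(\binom{n-r}{\ell-r+1}-1\bigr)$ is the companion size at the start of layer $r$ minus $r-1$, and $\varepsilon_r$ is the correction for the one set $W$ that is never in the neighborhood. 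In particular, at the start of layer $3$ ($t=1$) we get exactly $g=K+2-\binom{n-k-2}{\ell-2}$.

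\emph{Step 3: the case analysis.} Since $m$ is linear in $t$, on each layer the product $m\,g(m)$ is bounded by a product of an increasing linear and a decreasing linear function of $t$. Its maximum is either at an endpoint or at the vertex of the resulting parabola.
\begin{itemize}
\item For $r\geq4$, I expect a crude estimate to suffice. Namely $g\leq\binom{n-3}{\ell-3}+3$, while $m\leq|\mathcal B(n,k,\ell)|$, which is at most a bounded multiple of $2S-J$ in the range $n\geq2\max\{k,\ell\}$. Since $K/\binom{n-3}{\ell-3}=(n-2)/(\ell-2)\geq2$, the inequality should then follow with room to spare.
\item The main obstacle is $r=3$ with $t$ small, where the product is close to the extremal value. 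Here I would compare the relative gain in $m$, which is $t/(2S-J)$, with the relative loss in $g$. The loss is already $\binom{n-k-2}{\ell-2}/(K+2)$ at $t=1$, and it then grows linearly in $t$ at rate roughly $\binom{n-3}{\ell-2}/c_3$.
\end{itemize}
The key inequality to verify is
\[
(2S-J)\,\binom{n-k-2}{\ell-2}\geq K+2.
\]
This follows from $S\geq 2J+2$, established in the proof of Lemma~\ref{lem:orientation}; there the argument used $\ell\geq k$, so for the unordered range of \eqref{eq:shifted-range} I would check it directly, and it also comes from the lower bound $\binom{n-k-2}{\ell-2}\geq1$ together with $2S-J\gg K$. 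The same check at the vertex of the parabola then uses $c_3\leq S$ and $n\geq2\ell$.

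If the linear bound of Step~2 proves too weak near the end of layer $3$, I would instead treat the full-layer endpoint $t=c_3$ separately by the exact formula \eqref{eq:restricted-companion-count}, and use monotonicity between the two endpoints.
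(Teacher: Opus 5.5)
\textbf{Verdict: there is a genuine gap in Step~3.} Step~2 uses the right tool, but the crude bound you propose for layers $r\geq4$ is false, and the vertex analysis for $r=3$ cannot close the argument.

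\textbf{Step 2.} The tool is the same as the paper's: complementation followed by Lemma~\ref{lem:meeting-normalized-matching} gives $d\geq t\,\lambda_r$, where $\lambda_r=(\binom{n-r}{b}-1)/c_r$ and $b=\ell-r+1$. Your displayed bound is garbled, however. Your $\Gamma_r=\binom{n-r}{b-1}+1$ is the companion size at the \emph{end} of layer $r$ minus $r-1$, not at its start, so the display underestimates $g$ at $t=0$. The correct form is $g\leq Q-\lambda_r t$ with $Q=\binom{n-r+1}{b}+r-1$.

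\textbf{The estimate for $r\geq4$ is false.} The ratio $|\mathcal B(n,k,\ell)|/(2S-J)$ is not bounded. For $k=3$ and $n=2\ell$ it grows like $2\ell/9$, while the available room is only $K/\binom{n-3}{\ell-3}=(n-2)/(\ell-2)\approx2$. A concrete failure is $(n,k,\ell)=(20,3,10)$. There $|\mathcal B(n,k,\ell)|=660$, $2S-J=252$, $\binom{17}{7}+3=19451$ and $K+2=43760$, and $660\cdot19451>252\cdot43760$.

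\textbf{The vertex analysis for $r=3$ cannot work as planned.} The parabola $(P+t)(Q-\lambda_r t)$ passes through $(0,PQ)$, and $PQ$ is exactly the target value. It therefore stays below the target on $t\geq0$ if and only if its vertex lies at $t\leq0$, that is, iff $\lambda_rP\geq Q$. If the vertex were interior, the linear bound would exceed the target near it, so checking the vertex value cannot rescue the argument. Your ``key inequality'' $(2S-J)\binom{n-k-2}{\ell-2}\geq K+2$ concerns only the exact first step $t=1$ and is not this condition. Its justification is also inadequate when $k<\ell$: $S\geq2J+2$ does not compare $J$ with $K$, and $2S-J\gg K$ fails, for instance for $k=3$ with $\ell$ proportional to $n$.

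\textbf{The missing idea.} The slope condition $\lambda_r>Q/P$, with $P=\sum_{i<r}c_i$, holds for \emph{every} $3\leq r\leq\ell$.
\begin{itemize}
\item Since the capacities strictly decrease, $P>(r-1)c_r$. The condition then reduces to $(r-2)\binom{n-r}{b}-\binom{n-r}{b-1}\geq2(r-1)$.
\item This follows from $n-r\geq2b+r-2$, except for $(n,k,\ell,r)=(7,3,3,3)$, which is checked directly.
\item Consequently $(P+q)(Q-d)\leq(P+q)(Q-qQ/P)\leq PQ$ throughout layer $r$.
\item By Proposition~\ref{prop:restricted-lex-count}, $PQ$ is exactly the product at the end of layer $r-1$. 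You can therefore chain down layer by layer to $(2S-J)(K+2)$.
\end{itemize}
This is the paper's argument. It treats all layers uniformly and needs neither a global crude estimate nor any vertex analysis.
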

\begin{proof}
The capacity of the layer with minimum element $i$ is
$c_i=\binom{n-i}{k-1}-\binom{n-\ell-1}{k-1}$.
The first two layers have total size $c_1+c_2=2S-J$ and
companion size $K+2$, by Proposition~\ref{prop:restricted-lex-count}.
For $3\leq r\leq\ell$, write
$m=P+q$, where $P=\sum_{i<r}c_i$ and $0\leq q\leq c_r$.
Set $b=\ell-r+1$ and $Q=\binom{n-r+1}{b}+r-1$.
The same proposition gives
$|\overline N_\ell(\mathcal L)|=Q-d$, where $d$ is the size of the
disjointness neighborhood of a family $\mathcal H$ of $q$
$(k-1)$-sets in $U=[r+1,n]$, all meeting a fixed $b$-set
$W\subseteq U$. Taking the complement of each neighborhood member
in $U$ gives
\[
\{U\setminus B:B\in N_b^U(\mathcal H)\}
=\{G\in\tbinom U{n-r-b}:H\subseteq G
                         \text{ for some }H\in\mathcal H\}.
\]
Thus complementation is a bijection onto the upper
$(n-r-b)$-shadow of $\mathcal H$, every member of which meets $W$.
Applying
Lemma~\ref{lem:meeting-normalized-matching} to ranks $k-1$ and
$n-r-b$ gives
\begin{equation}\label{eq:late-layer-slope}
 d\geq q\,\frac{\binom{n-r}b-1}{c_r}.
\end{equation}
Here $n-r-b=n-\ell-1\geq k$, and exactly one $(n-r-b)$-set
avoids $W$, which explains the subtraction of one.

We verify $(\binom{n-r}b-1)/c_r>Q/P$.
Since $P>(r-1)c_r$, it suffices that
\begin{equation}\label{eq:late-binomial}
 (r-2)\binom{n-r}b-\binom{n-r}{b-1}\geq2(r-1).
\end{equation}
We have $n-r\geq2b+r-2$. For $b=1$, \eqref{eq:late-binomial}
holds except when $r=3,n-r\leq4$.
For $r=3,b\geq2$, its left side is at least
$\binom{2b+1}b-\binom{2b+1}{b-1}\geq5$.
For $r\geq4,b\geq2$, it is at least
$(r-3)\binom{r+2}2\geq2(r-1)$.
The only admissible exception is $(n,k,\ell)=(7,3,3)$ and $r=3$;
then $P=19,Q=7,c_r=3$, and
$(\binom{n-r}b-1)/c_r=1>7/19$ directly.
Consequently, for $q>0$, \eqref{eq:late-layer-slope} implies
\[
 (P+q)(Q-d)\leq(P+q)(Q-qQ/P)\leq PQ.
\]
Apply this successively to the last occupied layer, stopping after
the second full layer. This proves the lemma.
\end{proof}

\subsection{The first block of the second layer}

\begin{lemma}\label{lem:first-block-binomial}
Set $D=\binom{n-3}{k-2}$ and
$v=\binom{n-3}{\ell-1}-\binom{n-k-1}{\ell-1}$.
Under \eqref{eq:shifted-range},
\begin{equation}\label{eq:first-block-slope}
 v(S+1)\geq(D-1)(T+1)
\end{equation}
except possibly for
\[
 (n,k,\ell)=(8,3,4),(9,3,4),(10,3,5),(12,3,6),(10,4,5).
\]
\end{lemma}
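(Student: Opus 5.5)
We would prove \eqref{eq:first-block-slope} by comparing each side with a simple product of binomial proportions. The range \eqref{eq:shifted-range} allows either order of $k,\ell$, so neither can be treated as the larger one. First rewrite the four quantities as sums over the position of the first point. Pascal gives
\[
S=\sum_{j=2}^{\ell+1}\binom{n-j}{k-2},\qquad
T=\sum_{j=2}^{k+1}\binom{n-j}{\ell-2},\qquad
v=\sum_{j=4}^{k+1}\binom{n-j}{\ell-2}.
\]
The last identity uses $\binom{n-3}{\ell-1}-\binom{n-k-1}{\ell-1}=\sum_{j=4}^{k+1}\binom{n-j}{\ell-2}$. Thus $v=T-\binom{n-2}{\ell-2}-\binom{n-3}{\ell-2}$, so $T-v$ consists of the two largest summands of $T$.

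The inequality is equivalent to $(S+1)/(D-1)\geq(T+1)/v$. The left ratio is large: $S\geq\sum_{j=2}^{\ell+1}\binom{n-j}{k-2}$, and each summand is comparable to $D=\binom{n-3}{k-2}$. We would show $S/D\geq1+\sum_{j=4}^{\ell+1}\prod_{i}(1-(k-2)/(n-3-i))$, roughly $\ell\,(1-(k-2)/(n-3))^{\ell/2}$ in averaged form. For the right side we need $v/T$ bounded away from zero. Writing $T=v+\binom{n-2}{\ell-2}+\binom{n-3}{\ell-2}$, it suffices to bound $\binom{n-2}{\ell-2}/v$ above. Since $v\geq(k-2)\binom{n-k-1}{\ell-2}$, this reduces to products like $\prod_{i}\frac{n-2-i}{n-k-1-i}$.

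Next, split by the size of $k$. For $k=3$, $v=\binom{n-4}{\ell-2}$ and $D=n-3$. The inequality becomes an explicit comparison:
\[
\binom{n-4}{\ell-2}\Big(\sum_{j=2}^{\ell+1}(n-j)+1\Big)\geq(n-4)\Big(\binom{n-2}{\ell-2}+\binom{n-3}{\ell-2}+\binom{n-4}{\ell-2}+1\Big).
\]
Dividing by $\binom{n-4}{\ell-2}$ leaves $\ell n-\ell(\ell+3)/2+1$ against $(n-4)$ times $1+\frac{(n-2)(n-3)}{(n-\ell)(n-\ell-1)}+\frac{n-3}{n-\ell-1}$ plus a small term. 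With $n\geq2\ell$ this becomes a polynomial inequality in $n$ for each $\ell$. Monotonicity in $n$ for fixed ratio $n/\ell$ settles all large cases. The small cases $n=2\ell,2\ell+1,2\ell+2$ are checked directly, and this is where $(8,3,4),(9,3,4),(10,3,5),(12,3,6)$ appear.

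For $k\geq4$ the proportions are more favourable. We would use the rough bounds $S\geq\ell\binom{n-\ell-1}{k-2}$ and $T\leq k\binom{n-2}{\ell-2}$. We would also use $v\geq(k-2)\binom{n-k-1}{\ell-2}$ and $D\leq\binom{n-2}{k-2}$. We then check that $\ell(k-2)\binom{n-\ell-1}{k-2}\binom{n-k-1}{\ell-2}$ dominates $k\binom{n-2}{k-2}\binom{n-2}{\ell-2}$. This is cleanest at $n=2\max\{k,\ell\}$; we then show the ratio of the two sides is nondecreasing in $n$ by comparing consecutive values of $n$ factorwise. Where this crude estimate fails, only finitely many small triples remain, and these are computed directly. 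The one surviving failure there is $(10,4,5)$.

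The main obstacle is the monotonicity-in-$n$ step when $k$ and $\ell$ are close. When $\ell<k$ the roles of the two binomial families swap, and the crude bounds lose a factor near $2^{\ell}$. If that happens, we would keep the exact sums and compare them termwise, pairing the $j$-th summand of $S$ with the $j$-th summand of $v$.
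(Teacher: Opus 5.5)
\textbf{There is a genuine gap in your case $k\geq4$.} The $k=3$ part is sound in substance. Your displayed inequality is an exact reformulation, and it does fail at $(8,3,4),(9,3,4),(10,3,5),(12,3,6)$. However, ``checking $n=2\ell,2\ell+1,2\ell+2$ directly'' must be made uniform in $\ell$. This is easy: for $n\geq2\ell$ the factor multiplying $n-4$ is below $7$, while $S+1\geq\frac{3(\ell-1)}4n+1$, which settles $\ell\geq11$.

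The $k\geq4$ step fails. Replacing $S$ by $\ell$ copies of its smallest summand and $T$ by $k$ copies of its largest loses a factor exponential in $\min\{k,\ell\}$ when $n$ is near $2\max\{k,\ell\}$. Concretely, on the infinite family $(n,k,\ell)=(2k+1,k,k)$, $k\geq4$, the ratio of your two crude sides is
\[
\frac{\ell(k-2)\binom{n-\ell-1}{k-2}\binom{n-k-1}{\ell-2}}{k\binom{n-2}{k-2}\binom{n-2}{\ell-2}}
=(k-2)\frac{\binom k2^2}{\binom{2k-1}{k-2}^2}.
\]
This is about $0.16$ at $k=4$ and tends to $0$. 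Heuristically, the crude inequality only starts to hold once $n$ is of order $k\ell/\log\ell$. So your base case at $n=2\max\{k,\ell\}$ is false, and infinitely many triples remain, not finitely many.

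The claim that ``the proportions are more favourable for $k\geq4$'' is also wrong. The lemma is nearly sharp there. For $k=\ell$ one has $S=T$, so the inequality reduces to $v\geq D-1$, and on $(2k+1,k,k)$
\[
v-(D-1)=\tfrac1k\tbinom{2k-2}{k-1}-k+1,
\]
which is a relative margin of only about $1/(k-1)$ over $D-1$. At $(10,4,5)$ the inequality fails outright. Hence no estimate that loses a constant factor can work. Your fallback of pairing the $j$-th summand of $S$ with that of $v$ is not a defined comparison: the inequality compares the products $vS$ and $DT$, and the two sums have different lengths and lower indices.

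What is missing is exact control of the critical difference
\[
v(S+1)-(D-1)(T+1)=v(S-D+2)-(D-1)\Bigl(\binom{n-2}{\ell-2}+\binom{n-3}{\ell-2}+1\Bigr).
\]
For orientation, the paper proceeds as follows.
\begin{itemize}
\item It normalizes by $D$ and $E=\binom{n-3}{\ell-2}$.
\item It uses the hockey-stick identity to express $v/E$, $(S-D+2)/D$ and $T$ exactly in terms of $a=k-2$, $c=\ell-2$, $\tau=n-k-\ell+1$ and the single quantity $R=\tau(1-Z)$.
\item The lemma then reduces to the sign of a function $G(R)$ that is increasing in $R$, with $G(1)=\frac{(n-2)(n-1)(a-c)}{(c+1)(c+\tau+1)(a+\tau+1)}$. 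This settles $a\geq c$ via $R\geq1$ (or $R\geq6/5$).
\item The case $c>a$ uses explicit lower bounds on $R$ and polynomial positivity checks.
\item Finitely many small pairs of uniformities are handled by polynomials in $n$ with positive coefficients.
\end{itemize}
An argument of comparable exactness is needed in place of your $k\geq4$ step, in both orientations of $k,\ell$.
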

\begin{proof}
The following verification includes both orientations of the
uniformities. Put
\begin{gather*}
 a=k-2,\quad c=\ell-2,\quad
 \tau=n-k-\ell+1\geq2,\\
 E=\binom{n-3}c,
 \quad Z=\frac{\binom{n-k-1}c}{\binom{n-3}c},\quad R=\tau(1-Z).
\end{gather*}
Thus $n-3\geq2\max\{a,c\}+1$ and $D=\binom{n-3}a$.
The hockey-stick identity gives
\begin{align*}
 \frac vE&=\frac{a+R}{c+1},&
 \frac{S-D+2}{D}
 &=\frac{n-2}{c+\tau+1}+\frac{c+R}{a+1}+\frac2D,\\
 T&=v+E+\binom{n-2}c.
\end{align*}
Hence \eqref{eq:first-block-slope} is equivalent to
\begin{equation}\label{eq:initial-normalized}
 \frac{a+R}{c+1}
 \left(\frac{n-2}{c+\tau+1}+\frac{c+R}{a+1}+\frac2D\right)
 \geq\left(1-\frac1D\right)
 \left(1+\frac{n-2}{a+\tau+1}+\frac1E\right).
\end{equation}
Write
\[
 G(R)=\frac{a+R}{c+1}
 \left(\frac{n-2}{c+\tau+1}+\frac{c+R}{a+1}\right)
       -1-\frac{n-2}{a+\tau+1}.
\]
This increases with $R\geq0$. If $a\leq c$, then $D\leq E$,
and $G(R)\geq0$ suffices for \eqref{eq:initial-normalized}:
the omitted correction, multiplied by $D$, is
$2v/E+1+(n-2)/(a+\tau+1)-(D-1)/E>0$.

For fixed $a,c$, $R$ is nondecreasing in $\tau$.
Indeed, $Z=\prod_{i=1}^a(\tau+i)/(\tau+c+i)$, and
$Z^{-1}-1\geq\sum_{i=1}^a c/(\tau+i)$ gives
\[
 R'=1-Z-\tau Z\sum_{i=1}^a
       \frac{c}{(\tau+i)(\tau+c+i)}\geq0.
\]
For fixed $\tau$, $R$ also increases with $a,c$, as follows
from this product and its symmetric expression.
In particular, $R\geq6/5$ if $a,c\geq2$.
Direct simplification gives
\begin{equation}\label{eq:G-at-one}
 G(1)=\frac{(n-2)(n-1)(a-c)}
 {(c+1)(c+\tau+1)(a+\tau+1)}.
\end{equation}
This settles $a=c\geq2$. If $a>c$ and $R\geq1$, then
$G(R)>1/(c+1)>1/E$, which also implies
\eqref{eq:initial-normalized}, even after discarding its positive
terms involving $1/D$. For $c=1,a\geq3$, we have $\tau\geq a$
and $R\geq a^2/(2a+1)>1$. Thus this deals with $a>c$ except
$(a,c)=(2,1)$.

Suppose now that $c>a$. Set $d=c-a$, so $\tau\geq d+1$.
If $a\geq3$, put $\tau_0=d+1$. The last three factors of $Z$
at $\tau=\tau_0$ have product
\[
 \prod_{i=a-2}^{a}\frac{\tau_0+i}{\tau_0+c+i}
 =\frac{c^2-1}{2(4c^2-1)}<\frac18.
\]
Each preceding factor is less than $1/2$, so $Z(\tau_0)<2^{-a}$.
By the monotonicity of $R$ and the inequality $2^a\geq2a+1$,
\[
R\geq R(\tau_0)>(d+1)(1-2^{-a})\geq R_0:=\frac{2a(d+1)}{2a+1}.
\]
For $a=1$ or $2$, the corresponding exact lower bounds at
$\tau=c-a+1$ are
\[
 R_0=\frac{c^2}{2c+1}\quad(a=1),\qquad
 R_0=\frac{(c-1)(3c+1)}{4c+2}\quad(a=2).
\]
Here is an explicit polynomial check that these bounds imply
$G(R_0)\geq0$ for $a\geq3$, for $a=1,c\geq6$, and for
$a=2,c\geq4$. Clearing the positive denominator in $G(R_0)$
leaves, using $n=a+c+\tau+3$,
\begin{align*}
 F(a,c,\tau,R_0)
 ={}&(a+R_0)\big[(a+c+\tau+1)(a+1)+(c+R_0)(c+\tau+1)\big]
                         (a+\tau+1)\\
 &-(c+1)(a+1)(c+\tau+1)(2a+c+2\tau+2).
\end{align*}
In each row of the following table, substitute the indicated
nonnegative variables. Multiplication by the displayed factor
gives a polynomial all of whose nonzero coefficients are positive;
its constant coefficient is recorded in the last column.
This verifies the required signs by expansion.
\[
\begin{array}{c|c|c|c|r}
 a&c&\tau&\text{factor multiplying }F&\text{constant}\\ \hline
 x+3&a+y+1&y+2+z&(2a+1)^2&1120\\
 1&y+6&y+6+z&(2y+13)^2&46956\\
 2&y+4&y+3+z&4(2y+9)^2&20520
\end{array}
\]
It remains to check a finite list of pairs of small uniformities.
Let $H(n)=v(S+1)-(D-1)(T+1)$.
For $k=\ell=3$, $H(n)=0$ identically.
For the other pairs, the table lists the coefficients, in increasing
powers of $x=n-n_0$, of the polynomial $d_0H(n_0+x)$.
All are positive, so the indicated ranges are covered.
\[
\begin{array}{c|c|r|r|l}
 k&\ell&n_0&d_0&\text{coefficients}\\ \hline
3&4&10&2&(30,53,14,1)\\
3&5&11&3&(84,369,149,21,1)\\
3&6&13&24&(13176,20922,8129,1347,103,3)\\
3&7&14&120&(126960,284156,136996,29285,3220,179,4)\\
4&3&8&1&(53,37,10,1)\\
4&5&11&12&(9756,16020,7583,1590,157,6)
\end{array}
\]
The admissible values below these starting points are precisely
the five triples stated in the lemma.
\end{proof}

\begin{lemma}\label{lem:first-block-product}
If $1\leq q\leq D=\binom{n-3}{k-2}$ and
$\mathcal L=\mathcal L_\ell([n],k,S+q)$, then the following
inequality holds, with equality if and only if $q=1$:
\[
 (S+q)|\overline N_\ell(\mathcal L)|\leq(S+1)(T+1).
\]
\end{lemma}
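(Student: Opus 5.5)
We write $m=S+q$. Since $c_1=S$, Proposition~\ref{prop:restricted-lex-count} applies with $r=2$ and $t=q$. It gives
\[
|\overline N_\ell(\mathcal L)|=\binom{n-1}{\ell-1}+1-\Psi(n-2,k-1,\ell-1;q).
\]
For $q\leq D$ the partial second layer consists of the first $q$ lexicographic $(k-1)$-subsets of $[3,n]$. All of them contain $3$, so after deleting $3$ they form the first $q$ lexicographic $(k-2)$-subsets of $[4,n]$. A member of the neighborhood contains $3$ only if it is disjoint from nothing, so only $(\ell-1)$-sets of $[4,n]$ count. Hence
\[
\Psi(n-2,k-1,\ell-1;q)=\varphi(q):=\Psi(n-3,k-2,\ell-1;q).
\]
At $q=1$ the companion has size $T+1$, as recorded in Proposition~\ref{prop:threshold-equality}. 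So the companion size is $g(q)=T+1-\delta(q)$ with $\delta(q)=\varphi(q)-\varphi(1)$. The target inequality becomes
\[
(q-1)(T+1)\leq(S+q)\,\delta(q).
\]
I will first evaluate the total drop $\delta(D)$ explicitly from Theorem~\ref{cokk}; the intended outcome is $\delta(D)=v$, the quantity in Lemma~\ref{lem:first-block-binomial}.

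The key step is a linear (normalized-matching) lower bound
\[
\delta(q)\geq\frac{q-1}{D-1}\,\delta(D)\qquad(1\leq q\leq D).
\]
I would prove it by complementation inside $[4,n]$, exactly as in the proof of Lemma~\ref{lem:remove-late-layers}. The neighborhood $\varphi(q)$ corresponds to the upper $(n-2-\ell)$-shadow of the $q$ residual $(k-2)$-sets. Removing the contribution of the first member $W=[4,k+1]$ leaves the upper sets that avoid containing $W$. Since the lexicographic segment is nested, I would apply Lemma~\ref{lem:meeting-normalized-matching}, or its analogue for sets different from a fixed set, to the $q-1$ members other than $W$, normalized against the $D-1$ possible members. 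This is the step I expect to be the main obstacle: one must choose the correct ambient universe so that a normalized-matching coupling exists and so that the full block gives exactly the drop $\delta(D)$. If the direct coupling fails, I would fall back on the colex structure of the complements, namely initial colex segments, and show that the increments of $\varphi$ are nonincreasing in $q$. That yields the same chord bound, by concavity.

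Granting the linear bound, it suffices to show
\[
h(q)=(S+q)\Bigl(T+1-\frac{(q-1)v}{D-1}\Bigr)\leq(S+1)(T+1).
\]
The function $h$ is a concave quadratic in $q$ with $h(1)=(S+1)(T+1)$. Thus it is enough that $h'(1)\leq0$, which is precisely $(D-1)(T+1)\leq v(S+1)$, that is, inequality~\eqref{eq:first-block-slope} of Lemma~\ref{lem:first-block-binomial}.

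For strictness when $q>1$, the quadratic coefficient $-v/(D-1)$ is negative, so $h(q)<h(1)$ whenever $q>1$ and $h'(1)\leq0$. Equality at $q=1$ is immediate from the formula for $g(1)$. Two degenerate cases need separate treatment. If $D=1$ the range is $q=1$ only. If $k=\ell=3$, where $H\equiv0$, strictness instead comes from strict concavity. The five exceptional triples of Lemma~\ref{lem:first-block-binomial} remain. For these I would check the inequality directly, using the exact values of $\varphi(q)$ from Theorem~\ref{cokk} over the finitely many $q\leq D$. For $k=3$ this is easy, since then $D=n-3$ and $\varphi$ is explicit: the first $q$ singletons of $[4,n]$ have neighborhood $\binom{n-3}{\ell-1}-\binom{n-3-q}{\ell-1}$.
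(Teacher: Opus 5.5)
Your argument is correct. It has the same skeleton as the paper's proof: the companion size is $T+1-\delta(q)$, a chord lower bound on $\delta(q)$ is combined with the slope inequality \eqref{eq:first-block-slope}, a concave quadratic gives strictness for $q>1$, and the five exceptional triples are checked directly. The genuine difference is how the chord bound is obtained.

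The paper gets it from Lov\'asz's real form of Kruskal--Katona, $f(q)\geq\binom{x}{\ell-1}$ for $q=\binom{x}{n-k-1}$. It adds a calculus check that $\binom{x}{\ell-1}$ is concave as a function of $\binom{x}{n-k-1}$, and uses that the endpoints $q=1$ and $q=D$ are exact.

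Your normalized-matching route works, and more easily than you expected. After complementing in $[4,n]$, $\delta(q)$ counts the $(\ell-1)$-sets that meet $W=[4,k+1]$ and lie inside the complement of one of the other $q-1$ members. Those complements are $(n-k-1)$-sets meeting $W$. The nested coupling in the proof of Lemma~\ref{lem:meeting-normalized-matching} (with $N=n-3$ and ranks $\ell-1\leq n-k-1$) also yields that lemma's lower-shadow form. Here the two universes have sizes $|\mathcal U_{\ell-1}|=v$ and $|\mathcal U_{n-k-1}|=D-1$, so $\delta(q)\geq\frac{q-1}{D-1}v$ follows at once.

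Your route reuses a lemma already in the paper and avoids both Lov\'asz's theorem and the derivative computation. It also uses nothing about the lexicographic order beyond the first member being $W$. The paper's route costs one citation of a classical inequality plus a short concavity check.

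Two statements in your proposal are wrong, although neither affects the main route.

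First, the fallback is false: $\varphi$ is not concave for $k\geq4$. For $(n,k,\ell)=(10,4,5)$ the successive increments of $\varphi$ are $4,3,2,1,0,4$; the final jump occurs when $\{5,6\}$ follows $\{4,10\}$. Only the chord inequality survives, not monotone increments.

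Second, your explicit formula for $k=3$ is incorrect. An $(\ell-1)$-set is disjoint from some one of the first $q$ singletons exactly when it does not contain all of them, so $\varphi(q)=\binom{n-3}{\ell-1}-\binom{n-3-q}{\ell-1-q}$. Your expression instead counts sets meeting some singleton. Already at $q=1$ it gives $\binom{n-4}{\ell-2}$ rather than $\varphi(1)=\binom{n-4}{\ell-1}$, and it would corrupt the direct check of the four $k=3$ exceptional triples. With the correct formula that check goes through; for example, for $(8,3,4)$ the maximum over $2\leq q\leq5$ is $598<608$.
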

\begin{proof}
Put $f(q)=\Psi(n-2,k-1,\ell-1;q)$, in the notation of
Section~\ref{sec:shifting}. Proposition~\ref{prop:restricted-lex-count}
gives
\[
 |\overline N_\ell(\mathcal L)|=\binom{n-1}{\ell-1}+1-f(q),
 \quad f(1)=\binom{n-k-1}{\ell-1},\quad
 f(D)=\binom{n-3}{\ell-1}.
\]
The real-binomial form of Kruskal--Katona due to Lov\'asz
\cite{Frankl1984} gives the chord estimate
\begin{equation}\label{eq:first-block-chord}
 f(q)\geq f(1)+\frac{q-1}{D-1}\big(f(D)-f(1)\big).
\end{equation}
Here is the concavity detail for this application. Write
$q=\binom xp$, where $p=n-k-1>\ell-1=b$ and $p\leq x\leq n-3$.
Kruskal--Katona gives $f(q)\geq\binom xb$.
The latter is concave as a function of $\binom xp$ on this range:
if $g(x)=\binom xb$ and $P(x)=\prod_{j=b}^{p-1}(x-j)$, then
$\binom xp=(b!/p!)gP$ and
\[
 \frac{d\binom xp}{dg}=\frac{b!}{p!}
 \left(P+\frac{g}{g'}P'\right)
\]
is increasing, since $P',P''\geq0$ and
$g/g'=(\sum_{j=0}^{b-1}(x-j)^{-1})^{-1}$ is increasing.
Thus its graph lies above the chord joining the two endpoints,
as asserted in \eqref{eq:first-block-chord}.

Except for the five triples in Lemma~\ref{lem:first-block-binomial},
put $\lambda=(f(D)-f(1))/(D-1)$. That lemma gives
$\lambda(S+1)\geq T+1$. Since $q\geq1$,
\[
 (S+q)|\overline N_\ell(\mathcal L)|
 \leq(S+q)(T+1-\lambda(q-1))
 \leq(S+1)(T+1).
\]
For $q>1$, subtracting $(S+1)(T+1)$ from the middle expression
gives at most $-\lambda(q-1)^2<0$.
For the five remaining triples, the exact binomial-expansion
formula of Theorem~\ref{cokk} applies directly.
The following exact values show that the maximum is attained
only at $q=1$; here
$V(q)=(S+q)(\binom{n-1}{\ell-1}+1-f(q))$:
\[
\begin{array}{c|c|c|r|r|r}
n&k&\ell&D&V(1)&\max_{2\leq q\leq D}V(q)\\ \hline
8&3&4&5&608&598\\
9&3&4&6&1081&1036\\
10&3&5&7&3472&3404\\
12&3&6&9&18722&18198\\
10&4&5&21&9882&9676
\end{array}
\]
This completes the proof.
\end{proof}

\subsection{The remaining blocks of the second layer}

For $3\leq r\leq\ell+1$, define
\begin{equation}\label{eq:block-endpoints}
 A_r=S+\sum_{i=3}^r\binom{n-i}{k-2},\quad
 B_r=K+1+\binom{n-r}{\ell-r+1},\quad P_r=A_rB_r.
\end{equation}
In particular,
$P_3\leq(S+1)(T+1)$ by Lemma~\ref{lem:first-block-product}, and
$P_{\ell+1}=(2S-J)(K+2)$.

\begin{samepage}
\begin{lemma}\label{lem:block-parameters}
For $3\leq r\leq\ell$, write $b=\ell-r+1$ and $t=n-k-\ell>0$,
and put
\[
 D=\binom{n-r-1}{k-2},\quad E=\binom{n-r-1}b,
 \quad\alpha=\frac{A_r}{D},\quad\beta=\frac{B_{r+1}}E.
\]
Then both of the following alternatives hold:
\begin{align}
 &\alpha\geq\frac{(k-2)b}{(n-r-1)t}\quad\hbox{or}\quad\alpha>\beta+1,
 \label{eq:block-density-alternative}\\
 &\alpha\geq\frac{b(n-r-1+b+t)}{(n-r-1)t}\quad\hbox{or}\quad\alpha>\beta.
 \label{eq:block-difference-alternative}
\end{align}
\end{lemma}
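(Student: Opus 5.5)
The plan is to prove each alternative by a case split on $t=n-k-\ell$: for $t$ large we show the first inequality of each line directly, and for $t$ small we fall back on $\alpha>\beta+1$ and $\alpha>\beta$. I expect the first alternatives to cover the bulk of the range, because the right-hand sides there carry the factor $1/t$.

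\emph{Lower bound for $\alpha$.} Write $S=\sum_{i=2}^{\ell+1}\binom{n-i}{k-2}$, which is the hockey-stick form already used in Proposition~\ref{prop:threshold-equality}. Then
\[
 A_r=\sum_{i=2}^{\ell+1}\binom{n-i}{k-2}+\sum_{i=3}^{r}\binom{n-i}{k-2}.
\]
Every summand with $i\leq r+1$ is at least $D=\binom{n-r-1}{k-2}$. There are $r$ such summands in the first sum and $r-2$ in the second, so
\[
 \alpha\geq2r-2+\sum_{i=r+2}^{\ell+1}\frac{\binom{n-i}{k-2}}{D}.
\]
For the tail, there are $b-1$ terms. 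I would bound each below by $\binom{n-\ell-1}{k-2}/\binom{n-r-1}{k-2}$, or, more sharply, by a product of ratios $(n-i-k+2)/(n-i)\geq(t+1)/(t+k-1)$ and similar. This gives $\alpha\gtrsim 2r-2+(b-1)\theta$, where $\theta\in(0,1]$ increases with $t$.

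\emph{First alternatives.} We have $n-r-1\geq n-\ell-1\geq k$, and $n-r-1+b+t\leq 2(n-r-1)$ since $b+t\leq n-r-1$. Hence the right-hand side of \eqref{eq:block-density-alternative} is at most $(k-2)b/(kt)<b/t$, and that of \eqref{eq:block-difference-alternative} is at most $2b/t$. The bound $\alpha\geq 2r-2\geq4$ handles $b\leq2t$ in both lines. When $b$ is large relative to $t$, I would add the tail estimate $(b-1)\theta$, using $n\geq 2\ell$ (so $n-r-1\geq 2b+r-2$) to keep $\theta$ bounded below.

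\emph{Second alternatives.} The residual regime is $t$ small, say $t\leq 2$, with $b$ large. Here I would compute
\[
 \beta=\frac{K+1+\binom{n-r-1}{b-1}}{\binom{n-r-1}{b}},\qquad K=\binom{n-2}{\ell-2}.
\]
First estimate $\binom{n-r-1}{b-1}/\binom{n-r-1}{b}=b/(n-r-b)$, which is at most $1$. Next bound $K/E$ by expanding $\binom{n-2}{\ell-2}$ against $\binom{n-r-1}{\ell-r+1}$. Since $n$ is close to $k+\ell$ in this regime, $K/E$ is comparable to a ratio of binomials that I would match term by term with $A_r/D$. The aim is $\alpha-\beta>1$, which gives both \eqref{eq:block-density-alternative} and \eqref{eq:block-difference-alternative}.

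\emph{Main obstacle.} The hard part is this last comparison. When $t$ is tiny and $k$ is much smaller than $\ell$, $K$ can be large relative to $E$, so the crude bound $\alpha\geq 2r-2$ may not beat $\beta+1$. I would first try to write both $\alpha$ and $\beta$ as sums of normalized hockey-stick terms over the same index range and compare them termwise. If that fails, I would isolate finitely many small triples $(n,k,\ell,r)$ and verify them by direct computation, as is done in Lemma~\ref{lem:first-block-binomial}.
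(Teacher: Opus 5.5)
Your skeleton matches the paper's. Since $(k-2)/(n-r-1)<1$ and $\alpha\ge 2r-2$, failure of the first inequality in \eqref{eq:block-density-alternative} forces $t<b/(2r-2)$, and failure in \eqref{eq:block-difference-alternative} forces $t<3b/(4r-5)$. In those regimes one must show $\alpha>\beta+1$, resp.\ $\alpha>\beta$. The gap is that you never prove this comparison, and the steps you use to shrink the residual regime are wrong, so the fallback to finitely many checks cannot work.

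The tail cannot give growth like $(b-1)\theta$ with $\theta$ bounded below. Take $k=\ell$, $r=3$, $b=\ell-2$ and any fixed $t\ge1$, so $n=2\ell+t$. The normalized terms $\binom{n-i}{k-2}/D$ tend to $4,2,1,\frac12,\frac14,\dots$ for $i=2,3,4,5,6,\dots$, with a summable majorant. Hence $\alpha\to10$, while $(k-2)b/((n-r-1)t)\sim\ell/(2t)\to\infty$. So the first alternative genuinely fails on infinite families for every $t$, not only for ``$t\le2$'', and no finite list of direct computations covers what remains. Also, the obstacle you single out, $t$ tiny with $k\ll\ell$, cannot occur: $n\ge2\ell$ gives $t\ge\ell-k$, i.e.\ $k-2+t\ge b+r-3$. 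Your sketch never uses this constraint, and it is exactly what makes the comparison work.

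The missing idea is to normalize $\alpha$ and $K/E$ by the same ratio $F=\binom{n-2}{k-2}/D$. For $r\ge4$, the first four terms of $S$ and the two extra terms of $A_r$ give $\alpha\ge\frac{21}{8}F$, since consecutive ratios are at least $\frac12$ when $n\ge2k$. Factorial cancellation together with $n-\ell\ge\ell$ gives
\[
\frac{K}{EF}=\frac{\prod_{j=1}^{r-1}(b+t+j)}{\bigl(\prod_{j=1}^{r-3}(b+j)\bigr)(n-\ell-1)(n-\ell)}\le\Bigl(1+\frac{t}{b+1}\Bigr)^{r-1}.
\]
In the regime $t<b/(2r-2)$ this is below $2$, so $\beta+1<2F+\frac94$, using $\binom{n-r}b/E<2$ and $E\ge4$. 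The constraint $k-2\ge b+1-t$ yields $F>(8/5)^3$, hence $\alpha-(\beta+1)>\frac58F-\frac94>0$. The second alternative goes the same way with $K/(EF)<\frac{16}{7}$ and $F\ge(3/2)^3$. The case $r=3$ needs the separate bound $\alpha\ge\bigl(1+\frac{n-2}{b+t+2}\bigr)^2$, compared against $\beta+1<4+2+\frac1{10}$, resp.\ $\beta<\frac{101}{20}$.

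Some estimate of this kind, tying $K/E$ to $\alpha$ through a common scale, is what your ``termwise matching'' would have to supply. As written, the proposal leaves the actual content of the lemma unproved.
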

\end{samepage}
\begin{proof}
The parameter assumptions give
$n-r-1=k-2+b+t\geq2b+r-3$,
$k-2+t\geq b+r-3$, and $n\geq2k$.
The first $r$ terms in
$S=\sum_{i=2}^{\ell+1}\binom{n-i}{k-2}$ and the $r-2$ additional
terms in $A_r$ are at least $D$, so $\alpha\geq2r-2$.
We will repeatedly use
\begin{equation}\label{eq:block-r-three}
 \alpha\geq
 \frac{(n-2)(n-3)}{(b+t+2)(b+t+1)}
 +\frac{2(n-3)}{b+t+1}+1
 \geq\left(1+\frac{n-2}{b+t+2}\right)^2
 \quad(r=3).
\end{equation}
For $r\geq4$, put $F=\binom{n-2}{k-2}/D$.
The first four terms of $S$ and the additional terms indexed by
$3,4$ give
\begin{equation}\label{eq:block-alpha-F}
 \alpha\geq\frac{21}{8}F.
\end{equation}
Indeed, the three consecutive ratios beginning with
$\binom{n-2}{k-2}$ are at least $1/2$ by $n\geq2k$.
Moreover, direct cancellation of factorials gives
\begin{equation}\label{eq:block-K-F}
 \frac{K}{EF}
 =\frac{\prod_{j=1}^{r-1}(b+t+j)}
 {\big(\prod_{j=1}^{r-3}(b+j)\big)(n-\ell-1)(n-\ell)}
 \leq\left(1+\frac{t}{b+1}\right)^{r-1}.
\end{equation}

First suppose $\alpha<(k-2)b/((n-r-1)t)$. Then
$t<b/\alpha\leq b/(2r-2)$.
For $r=3$, this gives $b\geq4t+1\geq5$ and $b-3t\geq2$.
Thus \eqref{eq:block-r-three} gives $\alpha\geq25/4$,
whereas
\[
 \beta+1=\frac{\binom{n-r+1}b+\binom{n-r}b+1}{\binom{n-r-1}b}
 <4+2+\frac1{10}<\frac{25}{4}.
\]
For $r\geq4$, \eqref{eq:block-K-F} and
$(r-1)t/(b+1)<(r-1)/(2r-2)=1/2$ imply $K/E<2F$;
here one may use $(1+z)^u\leq(1-uz)^{-1}$ for $uz<1$.
Also $t<b/6$, $k-2\geq b+1-t$, and
\[
 F\geq\prod_{j=1}^3\left(1+\frac{k-2}{b+t+j}\right)
 \geq\left(\frac{2b+4}{b+t+3}\right)^3
 >\left(\frac85\right)^3.
\]
Since $\binom{n-r}b/E<2$ and $E\geq4$,
$\beta+1<2F+9/4$. Consequently
$\alpha-(\beta+1)>(5/8)F-9/4>0$.
This proves \eqref{eq:block-density-alternative}.

For the second alternative, suppose
$\alpha<b(n-r-1+b+t)/((n-r-1)t)$. Since $n-r-1\geq2b$,
\begin{equation}\label{eq:block-second-small-t}
 t<\frac{3b}{2\alpha-1}\leq\frac{3b}{4r-5}.
\end{equation}
If $r=3$, then $b\geq3$ and $t<3b/7$. Equation
\eqref{eq:block-r-three} yields
$\alpha>(1+14/11)^2=625/121$, while
$\beta<4+1+1/20=101/20<625/121$.
If $r\geq4$, then $b\geq4$, $E\geq\binom94=126$, and
\eqref{eq:block-K-F} gives
$K/(EF)<\exp\left(\frac{3(r-1)}{4r-5}\right) \leq e^{9/11}<16/7$.
Furthermore, \eqref{eq:block-second-small-t} implies the integer
inequality $b-3t\geq1$. Hence $F\geq(3/2)^3$ by the same
three-factor estimate. It follows that
\[
 \alpha-\beta>
 \left(\frac{21}{8}-\frac{16}{7}\right)F-\frac{127}{126}
 \geq\frac{19}{56}\frac{27}{8}-\frac{127}{126}>0.
\]
This proves \eqref{eq:block-difference-alternative} as well.
\end{proof}

\begin{lemma}\label{lem:block-endpoint-reduction}
For $3\leq r\leq\ell$ and $A_r\leq q\leq A_{r+1}$,
\[
 q\,|\overline N_\ell(\mathcal L_\ell([n],k,q))|
 \leq\max\{P_r,P_{r+1}\}.
\]
\end{lemma}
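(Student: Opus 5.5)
For $A_r\le q\le A_{r+1}$ the segment $\mathcal L_\ell([n],k,q)$ fills the first layer (size $S$), the initial blocks of the second layer indexed by $3,\dots,r$, and a partial block. Write $q=A_r+p$ with $0\le p\le D=\binom{n-r-1}{k-2}$. This block consists of the residual $(k-2)$-sets of $[3,n]$ with minimum $r+1$ (the sets $\{2,r+1\}\cup H$).

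\textbf{Step 1: the companion size.} Using Proposition~\ref{prop:restricted-lex-count} together with Lemma~\ref{lem:layers} applied inside the second layer, I expect
\[
|\overline N_\ell(\mathcal L)|=B_r-g(p),
\]
where $g(p)$ is the disjointness neighborhood size, on level $b=\ell-r+1$ inside $[r+2,n]$, of a lexicographic family of $p$ sets of size $k-3$. Its endpoints are $g(0)=0$ and $g(D)=B_r-B_{r+1}=\binom{n-r}{b}-\binom{n-r-1}{b-1}=E$. So the product is
\[
\phi(p)=(A_r+p)\bigl(B_r-g(p)\bigr),
\]
and at the endpoints $\phi(0)=P_r$ and $\phi(D)=P_{r+1}$.

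\textbf{Step 2: lower bounds for $g$.} Two estimates are available:
\begin{itemize}
\item a chord bound $g(p)\ge pE/D$, from the Lov\'asz concavity as in Lemma~\ref{lem:first-block-product}, or from Lemma~\ref{lem:meeting-normalized-matching} via complementation as in Lemma~\ref{lem:remove-late-layers};
\item a density bound from Lemma~\ref{lem:density}, applied on the ground set $[r+2,n]$ of size $n-r-1$ with uniformities $k-3$ and $b$. This gives $g(p)\ge E-E(1-x)L/(L+x)$ with $x=p/D$, i.e. the complement fraction $1-g/E$ is at most $L(1-x)/(L+x)$, where $L$ is of the form $(k-2)b/((n-r-1)t)$ up to the exact parameters.
\end{itemize}

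\textbf{Step 3: maximize $\phi$.} Normalize by $x=p/D$ and write $\phi/(DE)$ in terms of $\alpha=A_r/D$ and $\beta=B_{r+1}/E$, so that $B_r/E=\beta+1$. With the chord bound, $\phi\le(\alpha+x)(\beta+1-x)\,DE$. This is a concave quadratic with vertex at $x=(\beta+1-\alpha)/2$, so its maximum on $[0,1]$ is at an endpoint whenever $\alpha\ge\beta+1$, or at $x=1$ when $\alpha\ge\beta-1$. This is exactly where alternatives \eqref{eq:block-difference-alternative} and \eqref{eq:block-density-alternative} of Lemma~\ref{lem:block-parameters} enter.
\begin{itemize}
\item If $\alpha>\beta+1$, the chord bound already gives $\phi\le\max\{P_r,P_{r+1}\}$. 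In fact the bound is decreasing, so it gives $\phi\le P_r$.
\item If instead $\alpha\ge(k-2)b/((n-r-1)t)$, i.e. $\alpha\ge L$, use the density bound. Then $(\alpha+x)\bigl(\beta+(1-x)L/(L+x)\bigr)$ should be controlled by the endpoints. The function $(\alpha+x)/(L+x)$ is monotone with sign determined by $\alpha-L$, so the product splits as $(\alpha+x)\beta$ plus a term decreasing in $x$. I then compare this with a linear interpolation of the endpoints.
\item The second alternative \eqref{eq:block-difference-alternative}, i.e. $\alpha>\beta$ or the stronger threshold in $\alpha$, is used to show that the combination is convex or monotone, so that its maximum is at $x=0$ or $x=1$.
\end{itemize}

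\textbf{Main obstacle.} The delicate step is Step 3 in the density regime: the bound is not obviously endpoint-maximal, and the precise thresholds in Lemma~\ref{lem:block-parameters} must be matched exactly to the calculus. I would first try to show that $\psi(x)=(\alpha+x)\bigl(\beta+L(1-x)/(L+x)\bigr)$ has $\psi''\ge0$, or a single sign change of $\psi'$, under those alternatives, and then verify that $\psi(0)\le\alpha(\beta+1)$ and $\psi(1)=(\alpha+1)\beta$, which are the normalized $P_r$ and $P_{r+1}$.
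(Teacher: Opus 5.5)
Your outline follows the paper's proof. The ingredients are the same: the exact count $|\overline N_\ell(\mathcal L_\ell([n],k,q))|=B_{r+1}+|\mathcal G|=B_r-g(p)$, then Lemma~\ref{lem:density} for $\mathcal H$ and $\mathcal G$ on $[r+2,n]$, then the case split given by \eqref{eq:block-density-alternative}. Your derivation of the count via Proposition~\ref{prop:restricted-lex-count} and the layer decomposition of the second layer does telescope to this formula by Pascal's identity. The bound $v\le 1-u$ settles $\alpha>\beta+1$ exactly as you say.

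The gap is that you stop at the case $\alpha\ge L$, call it the main obstacle, and suggest that \eqref{eq:block-difference-alternative} is needed there. It is not. That alternative plays no role in this lemma; the paper uses it only in Lemma~\ref{lem:endpoint-sequence}. The monotonicity you already noted closes the case at once. Write $x=u=p/D$. For $\alpha\ge L$, the ratio $(\alpha+x)/(L+x)$ is nonincreasing, so it is at most its value $\alpha/L$ at $x=0$. The density bound $v\le L(1-x)/(L+x)$ then gives $(\alpha+x)v\le\alpha(1-x)$, whence
\[
(\alpha+x)(\beta+v)\le\alpha\beta+\beta x+\alpha(1-x)=(1-x)\,\alpha(\beta+1)+x\,(\alpha+1)\beta .
\]
The right side is a convex combination of $P_r/(DE)$ and $P_{r+1}/(DE)$, which is the required endpoint bound. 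Your convexity route also works under $\alpha\ge L$ alone, since $\psi''(x)=2L(1+L)(\alpha-L)/(L+x)^3\ge0$. The paper phrases the same step differently. It sets $w=\max\{\alpha,\beta\}\ge L$ and writes $(\alpha+u)(\beta+v)\le\alpha\beta+w(u+v)+uv\le\alpha\beta+w$, using $u+v+uv/w\le u+v+uv/L\le1$.

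There are also some slips to fix:
\begin{itemize}
\item The sets $H$ in $\{2,r+1\}\cup H$ are $(k-2)$-sets, not $(k-3)$-sets. With uniformities $k-2$ and $b$ on $n-r-1$ points, one has $n-r-1-(k-2)-b=t$. So $L=(k-2)b/((n-r-1)t)$ exactly, not merely ``up to the exact parameters'', and this is why it matches the threshold in Lemma~\ref{lem:block-parameters}.
\item The quadratic $(\alpha+x)(\beta+1-x)$ is increasing on $[0,1]$ when $\alpha\le\beta-1$, not when $\alpha\ge\beta-1$.
\item The chord bound $g(p)\ge pE/D$ needs no separate justification from Lov\'asz or normalized matching. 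It is the inequality $u+v\le1$, already contained in Lemma~\ref{lem:density}.
\end{itemize}
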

\begin{proof}
The first layer is
$\mathcal S=\{A:|A|=k,\ 1\in A,\ A\cap[2,\ell+1]\neq\emptyset\}$,
of size $S$. The consecutive blocks in the second layer are
\[
 \mathcal F_i=\{\{2,i\}\cup H:H\in\tbinom{[i+1,n]}{k-2}\},
 \qquad 3\leq i\leq\ell+1.
\]
Using the notation of Lemma~\ref{lem:block-parameters}, the
restricted-lex family of size $q=A_r+x$ consists of $\mathcal S$,
the full blocks $\mathcal F_3,\ldots,\mathcal F_r$, and
$\{\{2,r+1\}\cup H:H\in\mathcal H\}$, where
$\mathcal H\subseteq\binom{[r+2,n]}{k-2}$ has size $x$.

We count its companion by a disjoint partition.
An $\ell$-set avoiding $1$ and meeting all of $\mathcal S$ must be
$[2,\ell+1]$: if it also avoids $j\in[2,\ell+1]$, there are
at least $n-\ell-2\geq k-2$ further points with which to extend
$\{1,j\}$ to a disjoint member of $\mathcal S$.
All $K$ sets containing $\{1,2\}$ are in the companion.
A companion set containing $1$ but avoiding $2$ must contain
$[3,r]$. If not, take the least omitted $i\in[3,r]$.
The set then contains $[3,i-1]$, so the tail of $\mathcal F_i$
has at least $n-\ell-2\geq k-2$ available points outside it,
giving a disjoint member of that block. There are
$\binom{n-r-1}{b-1}$ such companion sets also containing $r+1$.
The remaining ones have the form $\{1\}\cup[3,r]\cup G$,
where $G\in\binom{[r+2,n]}b$ meets every member of $\mathcal H$.
Writing $\mathcal G$ for this family, we obtain exactly
\begin{equation}\label{eq:block-exact-companion}
 |\overline N_\ell(\mathcal L_\ell([n],k,q))|
 =K+1+\binom{n-r-1}{b-1}+|\mathcal G|=B_{r+1}+|\mathcal G|.
\end{equation}

Put
\[
u=\frac{|\mathcal H|}{D},\qquad
v=\frac{|\mathcal G|}{E},\qquad
L=\frac{(k-2)b}{(n-r-1)t}.
\]
Lemma~\ref{lem:density} gives $u+v+uv/L\leq1$.
If $\alpha\geq L$, set $w=\max\{\alpha,\beta\}\geq L$;
then
$(\alpha+u)(\beta+v) \leq\alpha\beta+w(u+v)+uv\leq\alpha\beta+w$.
Otherwise Lemma~\ref{lem:block-parameters} gives
$\alpha>\beta+1$. Since $v\leq1-u$,
$(\alpha+u)(\beta+v) \leq(\alpha+u)(\beta+1-u)\leq\alpha(\beta+1)$.
Multiplying by $DE$, and using $B_r=B_{r+1}+E$ and
$A_{r+1}=A_r+D$, proves the desired endpoint bound.
\end{proof}

\begin{lemma}\label{lem:endpoint-sequence}
The products in \eqref{eq:block-endpoints} satisfy
\[
 \max_{3\leq r\leq\ell+1}P_r=\max\{P_3,P_{\ell+1}\}.
\]
\end{lemma}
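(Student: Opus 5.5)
The plan is to show that the sequence $P_3,P_4,\ldots,P_{\ell+1}$ is quasi-convex, meaning it first decreases and then increases. Then its maximum over $3\leq r\leq\ell+1$ sits at an endpoint. Concretely, I will show that the successive differences $\Delta_r=P_{r+1}-P_r$, for $3\leq r\leq\ell$, change sign at most once, from negative to nonnegative.

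First compute the differences. From \eqref{eq:block-endpoints} we have $A_{r+1}=A_r+D_r$ with $D_r=\binom{n-r-1}{k-2}$. We also have $B_{r+1}=B_r-E_r$ with $E_r=\binom{n-r-1}{\ell-r+1}$, by Pascal's identity, since $\binom{n-r}{\ell-r+1}-\binom{n-r-1}{\ell-r}=\binom{n-r-1}{\ell-r+1}$. Hence
\[
\Delta_r=D_rB_{r+1}-A_rE_r=D_rE_r\left(\beta_r-\alpha_r\right),
\qquad \alpha_r=\frac{A_r}{D_r},\quad \beta_r=\frac{B_{r+1}}{E_r},
\]
in the notation of Lemma~\ref{lem:block-parameters}. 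So the sign of $\Delta_r$ is the sign of $\beta_r-\alpha_r$. It then suffices to prove the following: if $\beta_r\leq\alpha_r$ for some $r<\ell$, then $\beta_{r+1}\leq\alpha_{r+1}$. Equivalently, once $\alpha_r-\beta_r\ge 0$ it stays so, or at least the set of $r$ with $\Delta_r<0$ is an initial interval.

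Next, track the monotonicity of $\alpha_r$ and $\beta_r$ in $r$. For $\alpha$, the ratio $D_r/D_{r+1}=(n-r-1)/(n-r-k+1)>1$ and $A_{r+1}=A_r+D_r$, so
\[
\alpha_{r+1}=\frac{A_r+D_r}{D_{r+1}}=(\alpha_r+1)\frac{n-r-1}{n-r-k+1},
\]
and $\alpha_r$ is strictly increasing, indeed faster than linearly. For $\beta$,
\[
\beta_{r+1}=\frac{B_{r+1}-E_{r+1}}{E_{r+1}}=\beta_r\frac{E_r}{E_{r+1}}-1,
\qquad \frac{E_r}{E_{r+1}}=\frac{n-r-1}{\ell-r+1}.
\]
Suppose $\alpha_r\geq\beta_r$. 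Then
\[
\alpha_{r+1}-\beta_{r+1}\geq \alpha_r\frac{n-r-1}{n-r-k+1}-\beta_r\frac{n-r-1}{\ell-r+1}+1 .
\]
This is not automatically nonnegative, because the $\beta$ multiplier $(n-r-1)/(\ell-r+1)$ can exceed the $\alpha$ multiplier; for large $r$ it blows up as $\ell-r+1\to1$.

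This is the main obstacle: $\beta_r$ can grow faster than $\alpha_r$ near $r=\ell$. So instead of a pure one-sign-change argument, I would show directly that the sequence is decreasing at the start and that $\alpha_r<\beta_r$ cannot recur after a nonnegative step. A cleaner route is to compare against the product structure. Write $P_r=A_rB_r$ and consider the discrete log-concavity failure. I would aim to prove that $\Delta_{r+1}\geq\Delta_r\cdot c$ whenever $\Delta_r\ge0$, for some positive $c$. The intended tool is the second alternative \eqref{eq:block-difference-alternative}, which gives either $\alpha_r>\beta_r$, so $\Delta_r<0$, or a lower bound $\alpha_r\geq b(n-r-1+b+t)/((n-r-1)t)$. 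In the latter regime I would verify that $\beta_r$ is bounded by roughly $(n-r-1+b+t)/t\cdot(\cdots)$, contradicting $\beta_r\ge\alpha_r$ except in a range where $r$ is already large and $\beta$ is increasing.

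Concretely, I would show that $\{r:\alpha_r>\beta_r\}$ is an initial segment $[3,r_0]$. Then $P_r$ is decreasing on $[3,r_0+1]$ and nondecreasing on $[r_0+1,\ell+1]$, which gives $\max P_r=\max\{P_3,P_{\ell+1}\}$. I would handle small $(n,k,\ell)$ exceptions by direct computation, as in Lemma~\ref{lem:first-block-binomial}.
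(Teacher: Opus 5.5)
\textbf{There is a genuine gap.} Your setup matches the paper: $\Delta_r=DE(\beta-\alpha)$, and your recursions $\alpha_{r+1}=\frac{n-r-1}{b+t}(\alpha_r+1)$ and $\beta_{r+1}=\frac{n-r-1}{b}\beta_r-1$ are exactly the ones used there (recall $n-r-k+1=b+t$). But the proposal never proves the one step the lemma rests on, and the reduction you first state points the wrong way.

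\emph{The reduction is reversed.} You say it suffices to show that $\beta_r\le\alpha_r$ implies $\beta_{r+1}\le\alpha_{r+1}$, i.e.\ that once $\Delta_r\le0$ it stays so. That would make the sequence rise and then fall, which allows an interior maximum. What is needed is the converse propagation, $\beta_r\ge\alpha_r\Rightarrow\beta_{r+1}\ge\alpha_{r+1}$, so that the negative differences form an initial interval. The ``obstacle'' you identify is not an obstruction: the $\beta$-multiplier $\frac{n-r-1}{b}$ exceeding the $\alpha$-multiplier $\frac{n-r-1}{b+t}$ is precisely what makes the correct direction work. Only the additive terms need to be controlled.

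\emph{The correct direction is only gestured at.} Your last paragraphs speak of bounding $\beta_r$ so as to contradict $\beta_r\ge\alpha_r$ ``except in a range'', with unspecified small exceptions. No contradiction is wanted. The missing computation is the identity
\[
\beta_{r+1}-\alpha_{r+1}=\frac{n-r-1}{b}(\beta_r-\alpha_r)+\frac{(n-r-1)t}{b(b+t)}\alpha_r-1-\frac{n-r-1}{b+t}.
\]
Combine it with the following observation. If $\beta_r\ge\alpha_r$, the alternative $\alpha_r>\beta_r$ in \eqref{eq:block-difference-alternative} is excluded, so $\alpha_r\ge b(n-r-1+b+t)/((n-r-1)t)$. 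This makes the middle term at least $1+\frac{n-r-1}{b+t}$, and hence $\beta_{r+1}\ge\alpha_{r+1}$.

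This works uniformly for $3\le r\le\ell-1$, with no exceptional cases and no multiplicative estimate $\Delta_{r+1}\ge c\Delta_r$. Without it, your final claim that $\{r:\alpha_r>\beta_r\}$ is an initial segment is asserted rather than proved.
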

\begin{proof}
With the notation of Lemma~\ref{lem:block-parameters},
$P_{r+1}-P_r=D B_{r+1}-A_rE=DE(\beta-\alpha)$.
We show that a nonnegative difference is followed by a nonnegative
difference. For $3\leq r\leq\ell-1$, the parameters at the next
index obey, by direct cancellation of binomial coefficients,
\[
 \alpha'=\frac{n-r-1}{b+t}(\alpha+1),\qquad
 \beta'=\frac{n-r-1}b\beta-1.
\]
If $\beta\geq\alpha$, the second alternative in
Lemma~\ref{lem:block-parameters} forces
$\alpha\geq b(n-r-1+b+t)/((n-r-1)t)$. Consequently
\[
 \beta'-\alpha'
 =\frac{n-r-1}b(\beta-\alpha)
   +\frac{(n-r-1)t}{b(b+t)}\alpha-1-\frac{n-r-1}{b+t}\geq0.
\]
Thus the sequence of differences can change sign only from negative
to nonnegative. The maximum of the finite sequence is therefore at
one of its two ends.
\end{proof}

\subsection{The final comparison and the shifted-pair theorem}

\begin{lemma}\label{lem:last-endpoint-comparison}
Under \eqref{eq:shifted-range},
\[
 (2S-J)(K+2)
 \begin{cases}
 <(S+1)(T+1),&k\geq\ell,\\
 <M_2(n,k,\ell),&k<\ell.
 \end{cases}
\]
\end{lemma}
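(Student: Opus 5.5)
Both inequalities are direct numerical comparisons of binomial expressions; I would prove each by splitting it into a comparison of first factors and a comparison of second factors. Recall $2S-J=c_1+c_2=S+\sum_{i=3}^{\ell+1}\binom{n-i}{k-2}$, and $S=\sum_{i=2}^{\ell+1}\binom{n-i}{k-2}$. So $S-J=\sum_{i=3}^{\ell+1}\binom{n-i}{k-2}$, and $2S-J<2S$.

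\emph{Case $k<\ell$.} Here $M_2=(K+1)(c_1+c_2)=(K+1)(2S-J)$, since $\sum_{i=1}^2\bigl(\binom{n-i}{k-1}-\binom{n-\ell-i}{k-1}\bigr)=c_1+c_2$ with the restricted-universe capacities $c_i$. The plan would then be to compare $(2S-J)(K+2)$ with $(2S-J)(K+1)$. That goes the wrong way, so I must reread: $M_2$ uses $\binom{n-2}{\ell-2}+1=K+1$ while the endpoint has $K+2$. I would recompute $c_2$ carefully. We have $c_2=\binom{n-2}{k-1}-\binom{n-\ell-1}{k-1}$, whereas the second summand of $M_2$ is $\binom{n-2}{k-1}-\binom{n-\ell-2}{k-1}$, which is larger by $\binom{n-\ell-2}{k-2}$. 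Hence $M_2=(K+1)\bigl(2S-J+\binom{n-\ell-2}{k-2}\bigr)$. The claim becomes
\[
(2S-J)<(K+1)\binom{n-\ell-2}{k-2}.
\]
I would prove this by bounding $2S-J\leq 2\ell\binom{n-2}{k-2}$ from the sum representation. Then I would compare with $K\binom{n-\ell-2}{k-2}=\binom{n-2}{\ell-2}\binom{n-\ell-2}{k-2}$. Using $n\geq2\ell$ and $\ell\geq k+1$, $K$ is at least $\binom{2\ell-2}{\ell-2}$, which is exponentially large in $\ell$. Meanwhile the ratio $\binom{n-2}{k-2}/\binom{n-\ell-2}{k-2}\leq\bigl((n-2)/(n-\ell-k+1)\bigr)^{k-2}$ is bounded by roughly $2^{k}\cdot$poly when $n\geq2\ell$, and $k<\ell$. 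The main work is making this explicit for small parameters, especially $k=3$ with $\ell=4,5$, where I would check directly.

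\emph{Case $k\geq\ell$.} Now I would show $(2S-J)(K+2)<(S+1)(T+1)$ via $2S-J<2(S+1)$ and $2(K+2)\leq T+1$. In fact I would aim for the sharper split $(2S-J)/(S+1)<2$ together with $(K+2)\leq (T+1)/2$, which needs $T\geq 2K+3$. By the argument of Lemma~\ref{lem:orientation} with the roles of $k,\ell$ swapped (there it gave $S\geq2J+2$ using $\ell\geq k$), here $k\geq\ell$ gives $T\geq2K+2$. Equality occurs only at $k=\ell=3$. In that boundary case I would use the slack in the first factor instead: $2S-J\leq 2S+1-J<2(S+1)-J$. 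Then $(2S-J)(K+2)\leq(2S+2-J)(T+1)/2<(S+1)(T+1)$ whenever $J>0$, which always holds. So the combination $(2S-J)\cdot 2(K+2)\leq(2S-J)(T+1)<2(S+1)(T+1)$ suffices outright, giving strictness.

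The main obstacle I expect is the $k<\ell$ case with small $k$ (notably $k=3$). There the exponential estimate is weakest: $\binom{n-\ell-2}{1}=n-\ell-2$ could be small while $2S-J$ grows linearly in $\ell$ times $n$. I would handle $k=3$ exactly: $2S-J$ is an explicit quadratic-type expression, and $(K+1)(n-\ell-2)$ with $K=\binom{n-2}{\ell-2}\geq\binom{n-2}{2}$ dominates since $\ell\geq4$.
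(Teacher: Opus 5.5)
Your case $k\geq\ell$ is fine and follows the paper: both rest on $T\geq2K+2$ from Lemma~\ref{lem:orientation} with the uniformities interchanged. The paper simply expands $(S+1)(2K+3)-(2S-J)(K+2)\geq J(K+2)-S+2K+3>0$ using $S\leq\ell J$ and $K+2\geq\ell$, which avoids your boundary case. Your boundary argument tacitly uses $S=T$, so you should record that $T=2K+2$ forces $k=\ell$; this holds because for $k>\ell$ the extra term $\binom{n-\ell-2}{\ell-2}\geq1$ gives $T\geq2K+3$. Your reduction of the case $k<\ell$ to $2S-J<(K+1)D_0$ with $D_0=\binom{n-\ell-2}{k-2}$ is exactly the paper's.

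The gap is in how you propose to prove $2S-J<(K+1)D_0$. The bound $2S-J\leq2\ell J$ is too weak, and your heuristic that $J/D_0$ is ``roughly $2^k\cdot$poly'' is false when $k$ is close to $\ell$ and $n$ is close to $2\ell$. Take $k=\ell-1$ and $n=2\ell$, which is admissible for every $\ell\geq4$. Then $D_0=\ell-2$, $K=\binom{2\ell-2}{\ell-2}$, $J=\binom{2\ell-2}{\ell-3}$, and $KD_0=(\ell+1)J$. Both $K$ and $J/D_0$ grow like $4^\ell$ up to polynomial factors, and your estimate $\bigl((n-2)/(n-\ell-k+1)\bigr)^{k-2}$ is $(\ell-1)^{\ell-3}$ there. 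Hence $(K+1)D_0=(\ell+1)J+\ell-2<2\ell J$, so your chain cannot close. For example, at $(n,k,\ell)=(12,5,6)$ you get $844$ against $1440$, while the true value of $2S-J$ is $530$. This is an infinite family, not a small-parameter check, and it is the real obstacle rather than $k=3$.

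The missing ingredient is a sharper upper bound in this regime, namely $S\leq\binom{n-1}{k-1}=\frac{n-1}{k-1}J$. This gives $(2S-J)/J\leq\frac{2(n-1)}{k-1}-1$, which is about $4$ rather than $2\ell$ here. Combine it with the exact identity $D_0K/J=\binom{n-k}{\ell}\frac{\ell(\ell-1)}{(n-\ell)(n-\ell-1)}$ and its growth in $n$. At $n=2\ell$ this ratio equals $\binom{2\ell-k}{\ell-k}$. For $\ell\geq k+2$ this is at least $\binom{\ell+2}{2}\geq2\ell$, so your crude bound does suffice there. For $k=\ell-1$ you need $D_0K/J\geq n(\ell+1)/(2\ell)$ against $\frac{2(n-1)}{k-1}-1$ when $\ell\geq6$, with $(k,\ell)=(3,4)$ and $(4,5)$ checked directly. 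At $(n,k,\ell)=(8,3,4)$ one even has $D_0K=2S-J=30$, so strictness comes only from the extra $D_0$ in $(K+1)D_0$.
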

\begin{proof}
Suppose first that $k\geq\ell$. The estimate used in
Lemma~\ref{lem:orientation}, with the uniformities interchanged,
gives $T\geq2K+2$. Also $S\leq\ell J$ and $K+2\geq\ell$.
Therefore
$(S+1)(T+1)-(2S-J)(K+2) \geq J(K+2)-S+2K+3>0$.

Now let $k<\ell$ and put $D_0=\binom{n-\ell-2}{k-2}$.
Pascal's identity gives
$M_2(n,k,\ell)=(2S-J+D_0)(K+1)$, so it suffices to show
$D_0(K+1)>2S-J$. We prove the stronger inequality
$D_0K\geq2S-J$. Since $D_0>0$, this also proves the
strict inequality in the statement. Direct cancellation gives
\begin{equation}\label{eq:last-ratio}
 \frac{D_0K}{J}
 =\binom{n-k}{\ell}
   \frac{\ell(\ell-1)}{(n-\ell)(n-\ell-1)}.
\end{equation}
The ratio on the left increases with $n$: both $D_0$ and
$K/J$ do so. If $\ell\geq k+2$, its value at $n=2\ell$ is
at least $\binom{\ell+2}2\geq2\ell$, whereas
$(2S-J)/J\leq2\ell-1$.

It remains that $k=\ell-1$. The ratio in
\eqref{eq:last-ratio}, divided by $n$, is nondecreasing:
$D_0/n$ and $K/J$ are nondecreasing separately. Its value at
$n=2\ell$ is $(\ell+1)/(2\ell)$. Thus, for $\ell\geq6$,
\[
 \frac{D_0K}{J}\geq\frac{n(\ell+1)}{2\ell}
 \geq\frac{2n}{\ell-2}
 >\frac{2(n-1)}{k-1}-1\geq\frac{2S-J}{J}.
\]
For $(k,\ell)=(4,5)$, the left side of
\eqref{eq:last-ratio} is $(n-7)(n-8)(n-4)/6$, while
$(2S-J)/J\leq(2n-5)/3$. Their difference, on writing $n=10+x$,
is $1+16x/3+11x^2/6+x^3/6\geq0$.
For $(k,\ell)=(3,4)$, $2S-J=7n-26$ and
$D_0K=(n-6)(n-2)(n-3)/2$; their difference at $n=8+x$ is
$19x+13x^2/2+x^3/2\geq0$. These exhaust the remaining cases.
\end{proof}

\begin{theorem}\label{thm:shifted-pair-product}
Let $n\geq2\ell\geq2k\geq6$ and $n>k+\ell$.
If $\mathcal A\subseteq\binom{[n]}k$ and
$\mathcal B\subseteq\binom{[n]}\ell$ are nonempty, nontrivial,
shifted and cross-intersecting, then
\[
 |\mathcal A||\mathcal B|
 \leq\max\{M_1(n,k,\ell),M_2(n,k,\ell)\}.
\]
\end{theorem}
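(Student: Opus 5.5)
The plan is to reduce to a saturated shifted pair, bound the larger family's companion by the restricted-lex comparison, and then run through the layer estimates of this section according to the size of that family.

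\emph{Saturation.} First I would replace the pair by a saturated one without losing any hypothesis. Replace $\mathcal B$ by $\overline N_\ell(\mathcal A)$, and then $\mathcal A$ by $\overline N_k$ of the new $\mathcal B$. Each replacement only enlarges a family, so the product does not decrease, and cross-intersection is kept by construction. A superset of a nontrivial family is nontrivial, so nontriviality is preserved. Shiftedness is preserved because the companion of a shifted family is shifted, as shown in the proof of Lemma~\ref{lem:restricted-universe}. We may therefore assume \eqref{eq:saturation} together with shiftedness. Lemma~\ref{lem:restricted-universe}(i), applied in both orientations, then gives $\mathcal A\subseteq\mathcal B(n,k,\ell)$ and $\mathcal B\subseteq\mathcal B(n,\ell,k)$.

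\emph{Orientation.} If $|\mathcal A|\le S$ and $|\mathcal B|\le T$, then $|\mathcal A||\mathcal B|\le ST<M_1$ and we are done. Otherwise, by the symmetry of the setup \eqref{eq:shifted-range} in $k,\ell$, it suffices to prove the oriented estimate \eqref{eq:oriented-target} whenever $|\mathcal A|=m\ge S+1$; the case $|\mathcal B|\ge T+1$ is handled by interchanging the roles of $(\mathcal A,k)$ and $(\mathcal B,\ell)$. Since $\mathcal B\subseteq\overline N_\ell(\mathcal A)$, Theorem~\ref{sec123:cck} gives
\[
|\mathcal A||\mathcal B|\le m\,|\overline N_\ell(\mathcal L_\ell([n],k,m))|,
\]
so everything reduces to bounding this function of $m$ on the range $S+1\le m\le|\mathcal B(n,k,\ell)|$.

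\emph{Three ranges of $m$.} Note that $A_3=S+\binom{n-3}{k-2}$ and $A_{\ell+1}=c_1+c_2=2S-J$.
\begin{enumerate}
\item[(a)] For $S+1\le m\le A_3$, Lemma~\ref{lem:first-block-product} gives the bound $(S+1)(T+1)$.
\item[(b)] For $A_3\le m\le A_{\ell+1}$, Lemma~\ref{lem:block-endpoint-reduction} bounds the product on each block $[A_r,A_{r+1}]$ by $\max\{P_r,P_{r+1}\}$. Lemma~\ref{lem:endpoint-sequence} reduces this to $\max\{P_3,P_{\ell+1}\}$. Finally, $P_3\le(S+1)(T+1)$ and $P_{\ell+1}=(2S-J)(K+2)$.
\item[(c)] For $m\ge 2S-J$, Lemma~\ref{lem:remove-late-layers} gives the bound $(2S-J)(K+2)$.
\end{enumerate}
This establishes \eqref{eq:oriented-target}.

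\emph{Final comparison.} If $k<\ell$, Lemma~\ref{lem:last-endpoint-comparison} gives $(2S-J)(K+2)<M_2(n,k,\ell)$. If $k=\ell$, it gives $(2S-J)(K+2)<M_1$. In the swapped orientation ($|\mathcal B|\ge T+1$), the same lemma is applied with $(k,\ell)$ replaced by $(\ell,k)$. Because $\ell\ge k$, its first case applies, giving $(2T-K)(J+2)<(S+1)(T+1)=M_1$, using that $M_1$ is symmetric in $k,\ell$. In all cases the product is at most $\max\{M_1,M_2\}$.

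The step I expect to need the most care is the saturation reduction. One must check that passing to full companions keeps both families shifted and nontrivial, so that Lemma~\ref{lem:restricted-universe}(i) and Theorem~\ref{sec123:cck} apply. One must also check that the orientation swap really lands inside the hypotheses of the earlier lemmas, which were stated for unordered uniformities under \eqref{eq:shifted-range}. The numerical work itself is entirely contained in the lemmas already proved.
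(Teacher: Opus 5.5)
Your proposal is correct and follows the paper's proof essentially step for step. You prove the oriented estimate \eqref{eq:oriented-target} through Theorem~\ref{sec123:cck} and the same three ranges of $m$, then close with Lemma~\ref{lem:last-endpoint-comparison} in both orientations. The saturation step is harmless but unnecessary, since Lemma~\ref{lem:restricted-universe}(i) only needs $\overline N_\ell(\mathcal A)$, which contains $\mathcal B$, to be nonempty and nontrivial, and that holds directly.
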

\begin{proof}
We first establish \eqref{eq:oriented-target} without ordering
$k,\ell$. Nontriviality and shiftedness give
$\mathcal A\subseteq\mathcal B(n,k,\ell)$ by
Lemma~\ref{lem:restricted-universe}. If $m=|\mathcal A|\geq S+1$,
Theorem~\ref{sec123:cck} therefore gives
\begin{equation}\label{eq:shifted-restricted-reduction}
 |\mathcal B|\leq|\overline N_\ell(\mathcal A)|
 \leq|\overline N_\ell(\mathcal L_\ell([n],k,m))|.
\end{equation}
For $S+1\leq m\leq A_3$, apply
Lemma~\ref{lem:first-block-product}.
For $A_3\leq m\leq A_{\ell+1}=2S-J$, apply
Lemmas~\ref{lem:block-endpoint-reduction} and
\ref{lem:endpoint-sequence}; the two terminal products are bounded
by $(S+1)(T+1)$ and $(2S-J)(K+2)$.
For $m\geq2S-J$, apply Lemma~\ref{lem:remove-late-layers}.
These intervals cover every $m\geq S+1$, proving
\eqref{eq:oriented-target}.

Return now to $k\leq\ell$. If
$|\mathcal A|\leq S+1$ and $|\mathcal B|\leq T+1$, the product
is at most $M_1$. Otherwise apply \eqref{eq:oriented-target}
to a family exceeding its corresponding threshold, interchanging
the two families if necessary. Lemma~\ref{lem:last-endpoint-comparison}
bounds the second endpoint by $M_2$ in the original orientation
and by $M_1$ in the reversed orientation. This proves the theorem.
\end{proof}

\begin{corollary}\label{cor:shifted-product-equality}
Under the hypotheses of Theorem~\ref{thm:shifted-pair-product},
equality is possible only if $M_1(n,k,\ell)\geq M_2(n,k,\ell)$
and
$|\mathcal A|=S+1,\  |\mathcal B|=T+1$.
In that case, up to isomorphism, the pair has one of the forms
in Proposition~\ref{prop:threshold-equality}.
\end{corollary}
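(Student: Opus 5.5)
The plan is to retrace the proof of Theorem~\ref{thm:shifted-pair-product} and check, at every inequality used, whether it is strict. The only place equality can survive should be the first endpoint $m=S+1$. After that, Proposition~\ref{prop:threshold-equality} identifies the pair.

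Suppose $|\mathcal A||\mathcal B|=\max\{M_1,M_2\}$. First dispose of the case $|\mathcal A|\le S+1$ and $|\mathcal B|\le T+1$. Here the product is at most $M_1$, with equality only if both sizes are attained. Since $(S+1)(T+1)=M_1$, equality forces $M_1\ge M_2$, which is the first assertion.

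Otherwise, say $m=|\mathcal A|\ge S+1$, with both orientations of $k,\ell$ allowed as in \eqref{eq:oriented-target}, the symmetric case being identical. Then \eqref{eq:shifted-restricted-reduction} bounds $|\mathcal B|$ by the restricted-lex companion size, and I split according to the range of $m$.
\begin{itemize}
\item For $S+1\le m\le A_3$, Lemma~\ref{lem:first-block-product} is strict unless $m=S+1$. At $m=S+1$ the product can only equal $(S+1)(T+1)$ if $|\mathcal B|=T+1$. Again this needs $M_1\ge M_2$.
\item For $A_3\le m\le 2S-J$, Lemmas~\ref{lem:block-endpoint-reduction} and~\ref{lem:endpoint-sequence} bound the product by $\max\{P_3,P_{\ell+1}\}$. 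Here $P_3<(S+1)(T+1)$ strictly, by the equality clause of Lemma~\ref{lem:first-block-product} at $q=D>1$. Also $P_{\ell+1}=(2S-J)(K+2)$ is strictly below the relevant target by Lemma~\ref{lem:last-endpoint-comparison}.
\item For $m\ge 2S-J$, Lemma~\ref{lem:remove-late-layers} gives a bound of at most $(2S-J)(K+2)$, which is again strictly smaller.
\end{itemize}
In the reversed orientation, with $|\mathcal B|\ge T+1$, the same argument with $k,\ell$ interchanged gives the conclusion. The comparison used is the $k\ge\ell$ case of Lemma~\ref{lem:last-endpoint-comparison}, which produces strictness against $M_1$.

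Hence equality forces $|\mathcal A|=S+1$ and $|\mathcal B|=T+1$, or the symmetric statement, which is the same. It also forces the product to equal $M_1$, so $M_1\ge M_2$.

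In that case, shiftedness and nontriviality give $\mathcal A\subseteq\mathcal B(n,k,\ell)$ by Lemma~\ref{lem:restricted-universe}(i). Maximality makes the pair saturated, so $\mathcal B=\overline N_\ell(\mathcal A)$. Indeed, $|\mathcal B|=T+1$ is already the maximum allowed by Theorem~\ref{sec123:cck} together with Proposition~\ref{prop:threshold-equality}, and $\mathcal B\subseteq\overline N_\ell(\mathcal A)$.

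So equality holds in Proposition~\ref{prop:threshold-equality}, which yields the forms (i) or (ii) up to isomorphism.

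The main obstacle is bookkeeping of strictness. One must confirm that the endpoint $P_3$ really corresponds to $q=D\ge2$, so that it is strict; this needs $D=\binom{n-3}{k-2}\ge2$, which holds since $n\ge 2k\ge6$. One must also confirm that the boundary values $m=A_3$ and $m=2S-J$, which are shared between ranges, inherit strictness.
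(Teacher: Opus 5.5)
Your proposal is correct and follows essentially the same route as the paper. You show the oriented estimate is strict whenever the selected family exceeds its threshold, using the $q=D>1$ case of Lemma~\ref{lem:first-block-product} for $P_3$ and Lemma~\ref{lem:last-endpoint-comparison} for $P_{\ell+1}$; this forces $|\mathcal A|=S+1$, $|\mathcal B|=T+1$ and $M_1\geq M_2$, and then $\mathcal B=\overline N_\ell(\mathcal A)$ follows from $\mathcal B\subseteq\overline N_\ell(\mathcal A)$ and $|\overline N_\ell(\mathcal A)|\leq T+1$ before applying Proposition~\ref{prop:threshold-equality}. Your opening appeal to saturation by maximality is not needed, and is not clearly justified for a pair known to be extremal only among shifted pairs; the size argument you give immediately after it already does the work, exactly as in the paper.
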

\begin{proof}
By Lemma~\ref{lem:first-block-product}, the first block has
product strictly less than $M_1$ at every size greater than
$S+1$. In particular, $P_3<M_1$.
Lemma~\ref{lem:last-endpoint-comparison} gives a strict bound
for the other endpoint as well: it is less than $M_2$ in the
original orientation and less than $M_1$ in the reversed
orientation.
Lemmas~\ref{lem:block-endpoint-reduction},
\ref{lem:endpoint-sequence} and
\ref{lem:remove-late-layers} now show that the oriented
product estimate is strict whenever the selected family has
size greater than its threshold. This argument applies in
either orientation.

Thus equality requires $|\mathcal A|\leq S+1$ and
$|\mathcal B|\leq T+1$. Their product then forces both
equalities and $M_1\geq M_2$.
Let $\mathcal B^*=\overline N_\ell(\mathcal A)$.
Since $\mathcal B\subseteq\mathcal B^*$ and $\mathcal B$ is
nonempty and nontrivial, so is $\mathcal B^*$.
Proposition~\ref{prop:threshold-equality} gives
$|\mathcal B^*|\leq T+1=|\mathcal B|$.
Hence $\mathcal B=\mathcal B^*$, and
Proposition~\ref{prop:threshold-equality} gives the stated
classification. Conversely, these pairs have
product $M_1$.
\end{proof}

\begin{proof}[Proof of Theorem~\ref{mthm}.]
Take an extremal nontrivial cross-intersecting pair.
Lemma~\ref{lem:normalization} transforms it, preserving both
cardinalities, into a saturated pair that is either shifted or
critically shifted. Theorem~\ref{thm:shifted-pair-product} and
Corollary~\ref{cor:critical-pair-product} give the claimed bound.
The two constructions in the introduction attain $M_1$ and
$M_2$, respectively, so the bound is sharp.

Now let $(\mathcal A,\mathcal B)$ attain equality. Every pair
in its normalization sequence is again extremal and saturated.
If the terminal pair is shifted,
Corollary~\ref{cor:shifted-product-equality} gives
$M_1\geq M_2$ and terminal sizes $S+1,T+1$.
These are also the sizes of the original families.
Apply Proposition~\ref{prop:threshold-equality} directly to
the original saturated pair. It gives precisely (i) or (iii).
Thus no assertion that shifting preserves isomorphism is needed.

If the terminal pair is critically shifted, equality in
Corollary~\ref{cor:critical-pair-product} forces $M_2\geq M_1$
and the form in (ii). Reverse the normalization sequence,
applying Lemma~\ref{lem:inverse-two-point} at each step with
$(a,b)=(\ell,k)$. The original pair therefore also has the
form in (ii).

Conversely, the pairs in (i) and (ii) attain their respective
products by the counts in the introduction. In (iii), both
families have size $3n-8$, so their product is $M_1(n,3,3)$.
Lemma~\ref{lem:orientation} gives
$M_2(n,3,3)=F(n,3,3)<M_1(n,3,3)$.
All the displayed pairs are nontrivial and cross-intersecting,
which proves sufficiency and completes the classification.
\end{proof}

\section*{ AI-use declaration}
Generative AI tools were  used to assist with the proofs of the inequalities in Lemmas 4.2–4.8 and with basic error checking and language polishing of the manuscript. We take full responsibility for all content of this paper.

\setlength{\bibsep}{2pt plus 1pt minus 1pt}

\end{document}